\renewcommand{\bar}[1]{\overline{#1}}
\renewcommand{\theequation}{\thesection.\@arabic\c@equation}
\newcommand\Nopagebreak{\@nobreaktrue\nopagebreak}
\newcommand{\com}[1]{
}
\newcommand{\us}[1]{{\upshape{#1}}}
\newcommand{\bm}{\begin{matrix}}
\newcommand{\enm}{\end{matrix}}
\newcommand{\bp}{\begin{pmatrix}}
\newcommand{\ep}{\end{pmatrix}}
\newcommand{\bsp}{\begin{split}}
\newcommand{\esp}{\end{split}}
\newcommand{\bc}{\begin{center}}
\newcommand{\ec}{\end{center}}
\newcommand{\ed}{{\rm d}}
\newcommand{\w}{{\mathchoice{\,{\scriptstyle\wedge}\,}{{\scriptstyle\wedge}}    
      {{\scriptscriptstyle\wedge}}{{\scriptscriptstyle\wedge}}}}                
\newcommand{\ol}{\overline}
\newcommand{\liealgebra}[1]{{\mathfrak {#1}}}
\newcommand{\End}{\liegroup{End}}
\newcommand{\gl}{\liealgebra{gl}}
\newcommand{\so}{\liealgebra{so}}
\newcommand{\lsl}{\liealgebra{sl}}
\newcommand{\csy}{\liealgebra{csp}}
\newcommand{\g}{\liealgebra{g}}
\newcommand{\h}{\liealgebra{h}}
\newcommand{\ft}{\liealgebra{t}}
\newcommand{\lk}{\liealgebra{k}}
\newcommand{\lp}{\liealgebra{p}}
\newcommand{\liegroup}[1]{{\operatorname{#1}}}
\newcommand{\G}{\liegroup{G}}
\newcommand{\K}{\liegroup{K}}
\newcommand{\GL}{\liegroup{GL}}
\newcommand{\SL}{\liegroup{SL}}
\newcommand{\SO}{\liegroup{SO}}
\newcommand{\Sp}{\liegroup{Sp}}
\newcommand{\CSp}{\liegroup{CSp}}
\newcommand{\Un}{\liegroup{U}}
\newcommand{\Or}{\liegroup{O}}
\newcommand{\GG}{\liegroup{G}_2}
\newcommand{\GGC}{{\mathbb G}_2}
\DeclareMathOperator{\stab}{Stab}
\newcommand{\Ad}{\liegroup{Ad}}
\DeclareMathOperator{\rank}{rank}
\renewcommand{\Re}{\operatorname{Re}}
\renewcommand{\Im}{\operatorname{Im}}
\newcommand{\om}{\omega}
\newcommand{\gr}{{\rm{Gr}}(2,7)}
\newcommand{\s}[1]{\mathbb{S}^{#1}}
\newcommand{\R}{\mathbb R}
\newcommand{\C}{\mathbb C}
\newcommand{\Z}{\mathbb Z}
\newcommand{\Oc}{\mathbb O}
\newcommand{\mca}{\mathcal A}
\newcommand{\mcb}{\mathcal B}
\newcommand{\mcl}{\mathcal L}
\newcommand{\mcc}{\mathcal C}
\newcommand{\mccb}{\bar{\mathcal C}}
\newcommand{\mcd}{\mathcal D}
\newcommand{\mcv}{\mathcal V}
\newcommand{\mcw}{\mathcal W}
\newcommand{\p}{\varphi}
\newcommand{\cp}[1]{{\C \mathbb{P}^{#1}}}
\newcommand{\al}{\alpha}
\newcommand{\ns}{\negthinspace}
\newcommand{\trp}{ {}^t \ns}
\newcommand{\hd}{{}^* \ns}
\newcommand{\la}{\lambda}
\newcommand{\rlg}{{\mcl}}
\newcommand{\rlgm}{{\mcl_-^\tau}}
\newcommand{\rlgmtw}{{\mcl_{-}^{\tau,\sigma}}}
\def\Id{\operatorname{Id}}
\theoremstyle{plain}
\newtheorem{thm}{Theorem}[section]
\newtheorem{lem}[thm]{Lemma}
\newtheorem{cor}[thm]{Corollary}
\newtheorem{prop}[thm]{Proposition}
\newtheorem{rem}[thm]{Remark}
\theoremstyle{definition}
\newtheorem{defn}{Definition}[section]
\newtheorem{exam}[thm]{Example}
\newtheorem{exams}[thm]{Examples}
\newcommand{\be}{\begin{equation}}
\newcommand{\ee}{\end{equation}}
\newcommand{\bs}{\begin{section}}
\newcommand{\es}{\end{section}}
\newcommand{\bss}{\begin{subsection}}
\newcommand{\ess}{\end{subsection}}
\newcommand{\bit}{\begin{itemize}}
\newcommand{\eit}{\end{itemize}}
\def\pr{\mathbb{P}}
\def\Aut{\operatorname{Aut}}
\begin{document}
\title[Rational Loop Groups]{Generators for Rational Loop Groups and Geometric Applications}
\author{Neil Donaldson  }
\author{Daniel Fox}

\author{Oliver Goertsches}

\address{Department of Mathematics\\ University of California Irvine\\ Irvine, California 92697}
\email{ndonalds@math.uci.edu, dfox@math.uci.edu, ogoertsc@math.uci.edu}

\thanks{The authors would like to thank Chuu-Lian Terng for useful conversations and for suggesting the problems addressed in this article.
The third author was supported by the Max-Planck-Institut f\"ur Mathematik in Bonn and a DAAD-postdoctoral scholarship.}

\date\today
\keywords{Loop group, integrable system, submanifold geometry, generator theorem, simple element, neo-classical}

\begin{abstract} Uhlenbeck proved that a set of simple elements generates the group of rational loops in $\GL(n,\C)$ that satisfy the $\Un(n)$-reality condition.   For an arbitrary complex reductive group, a choice of representation defines a notion of rationality and enables us to write down a natural set of simple elements.  Using these simple elements we prove generator theorems for the fundamental representations of the remaining neo-classical groups and most of their symmetric spaces. In order to apply our theorems to submanifold geometry we also obtain explicit dressing and permutability formulae. We introduce a new submanifold geometry associated to $\GG/\SO(4)$ to which our theory applies.
\end{abstract}

\maketitle

\tableofcontents

\bs{Introduction}
The aim of this paper is threefold:  
\begin{itemize}
\item to provide a general definition of simple elements, motivated by the observation that the notion of rationality of holomorphic loops depends on a choice of representation
\item to prove generator theorems for (twisted) rational loop groups using these simple elements
\item to introduce a new submanifold theory that is governed by the $\GG/\SO(4)$-system.  
\end{itemize}

Generating new solutions of geometric problems by means of holomorphic loop group dressing actions has a long history. The abstract theory was formalised by Pressley--Segal \cite{Pressley1986}; in contrast some of the details of the widespread geometric applications are somewhat colloquial.
Given a complex reductive Lie group $G$, the (holomorphic) loops into $G$ are holomorphic maps $g$ from (open dense) subsets of $\C\pr^1$ into $G$. As is well known, the Birkhoff factorisation theorem yields a dressing action of negative loops $g_-\in \mcl_-(G)$ (holomorphic near $\infty$ with $g_-(\infty)=1$) on positive loops $g_+\in \mcl_+(G)$ (holomorphic on $\C$) as follows: given generic $g_\pm\in \mcl_\pm(G)$, there exist $\hat g_\pm\in \mcl_\pm(G)$ such that $g_-g_+=\hat g_+\hat g_-$; the \emph{dressing action} of $g_-$ on $g_+$ is then defined by $g_- * g_+:=\hat g_+$.

Many examples of this set-up appear throughout the literature, often with additional (reality or twisting) conditions on the loops: for example the loops satisfying $\tau g(\lambda)=g(\bar\lambda)$ where $\tau$ is some anti-holomorphic involution of $G$ form the subgroup $\mcl_-^\tau(G)$ and are said to satisfy a \emph{reality condition}. In many examples smooth maps into the positive loop group are seen to correspond to special submanifolds, or solutions to a system of PDEs. These are preserved under dressing transforms by negative loops and so the transforms descend to transforms of submanifolds and solutions of PDEs. For example Uhlenbeck \cite{Uhlenbeck1989} applies this construction to harmonic maps into $\Un(n)$; Terng's group \cite{Terng2000,Bruck2002,Donaldson2008} study her $U/K$-systems \cite{Terng1997}; other applications include \cite{Burstall1994,Ferus1996a,Burstall2004}. Calculating and working explicitly with the dressing action of a generic negative loop is, in general, extremely difficult.  Therefore most of the authors cited above make use of special negative loops, termed \emph{simple factors}, or \emph{simple elements}, the dressing action of which is straightforward to compute.

Definitions of simple factors have been given in special situations, but thusfar no general definition has been forthcoming. Examples of simple factors typically exhibit several of the following desirable properties:
\begin{enumerate}
\item They are \emph{rational} loops with as few poles as possible with lowest possible degree.
\item Their dressing action is explicitly calculable.
\item They act by (simple) geometric transforms on any underlying geometry of the Lie group.
\item There is a permutability theorem: given simple factors $p_1,p_2$, there should exist simple factors $\hat p_1,\hat p_2$ such that $\hat p_1p_2=\hat p_2p_1$; the dressing transforms of each side therefore correspond.
\item The collection of simple factors generates the rational loop group.
\end{enumerate}
The above list describes the maximally desirable situation.  However, the fifth point has only ever been demonstrated for $\GL(n,\C)$ \cite{Uhlenbeck1989}\footnote{More recently, a unique factorisation theorem for the same group was obtained by Dai--Terng \cite{Dai2007}.} and for the twisted loop group associated to $\Un(n)/\Or(n)$ \cite{Terng2006}. While we have in mind the application of dressing to geometric problems, the main thrust of this paper is on finding generators for rational loop groups. The applications of such theorems are immediate: if one can factor any rational loop as a product of simple factors whose dressing action is easily computable, then, in principle, the action of any rational loop is computable.\\

As was noted by Pressley--Segal \cite{Pressley1986}, rationality of loops is only defined for matrix groups. More formally, in Section \ref{sec:rat}, we define rationality for any group $G$ with respect to any representation: $g\in \mcl(G)$ is rational with respect to a representation $\rho:G\to\End(V)$ iff $\rho\circ g$ is rational. In the literature, authors have generally used the adjoint representation or, when dealing with matrix groups, the standard matrix representation.

In Section \ref{sec:simpleelements} we define simple factors for any representation of a complex reductive Lie group satisfying the reality condition with respect to a compact real form. The simple factors depend on the chosen representation. The various definitions of a simple factor given in the papers above are all special cases of the definition presented in Section \ref{sec:simpleelements}.

In Sections 4, 5 \& 6 we prove generating theorems for the rational loop groups of the fundamental representations of $\SO(n,\C),\ \GGC$, and the conformal symplectic group $\CSp(n,\C)$. Together with Uhlenbeck's work this establishes generating theorems for the fundamental representations of all of the neo-classical groups. The appearance of the conformal symplectic group may seem strange at first, but it is a natural consequence of attempting to prove a generating theorem.  Since centres of groups act trivially while dressing, this is of no concern for geometric applications. Indeed this approach has been followed before: dressing of positive loops in $\SL(n,\C)$ tends to be done instead by $\GL(n,\C)$-simple factors; e.g.\ \cite{Terng2000}.

Our simple factors have the same general form in each situation: all are sums of projections onto certain weight spaces with rational functions as coefficients. For example, in $\CSp(n,\C)$ we calculate that
\[p_{\alpha,W}(\lambda)=\frac{\lambda-\alpha}{\lambda-\bar\alpha}\pi_W+\pi_{W^\perp},\]
is a simple element, where $\pi_W$ is projection onto a Lagrangian subspace $W\subset\C^{2n}$ and ${}^\perp$ is Hermitian perp. Although our generating theorems follow the same strategy that was used by Uhlenbeck in $\GL(n,\C)$ --- expand a rational loop in a power series about a pole and use simple elements to reduce the order of the pole --- the linear algebra associated to each group and representation alters the nature of the details.  The case of the fundamental representation of $\GG$ proved to be particularly subtle, and aside from cases such as $\SO(3)$, in which the adjoint representation is isomorphic to the standard representation, we do not have proofs for generating theorems for any representations other than the fundamental ones.

For the fundamental representation of each group we prove a permutability formula, and explicit expressions for the dressing action of simple factors. In contrast to the classical groups, and as observed in \cite{Burstall2004}, \cite{Burstall1997}, \cite{Pressley1986}, the dressing action and permutability formulae for $\GGC$ must depend on derivatives of the loop being dressed.

We extend our generating theorems to the twisted loop groups associated to all symmetric spaces with $\SO(n)$ or $\GG$ as their isometry group and the symmetric space $\CSp(n)/\Un(n)$.\footnote{We do not currently see how to handle the complex and quaternionic Grassmannians.}  This is what is required for applications to $U/K$-systems and harmonic maps into symmetric spaces.   

Terng and Wang \cite{Terng2006} introduced an affine extension of the $\Un(n)/\Or(n)$-system, allowing them to characterize flat Lagrangian submanifolds of $\C^n$ in terms of a twisted loop group.  By using an extended Gauss map we define the notion of a $\GG/\SO(4)$-abelian surface in $\R^7$ and show that such surfaces are equivalent to solutions of an affine extension of the $\GG/\SO(4)$-system.  The simple elements can be used to generate new solutions from any solution at hand.  
\es

\section{The rational loop group associated to a representation}\label{sec:rat}

Associated to every complex Lie group $G$ is its {\bf holomorphic loop group}
\[
\mcl(G):= \left\{ g:U\to G \text { holomorphic}\mid U\subset \cp{1} \text{ is open and dense} \right\};
\]
multiplication requires intersecting the respective domains. We do not distinguish between loops that agree on an open and dense set.  



We are interested in subgroups of $\mcl(G)$ consisting of functions that are rational on $\cp{1}$, but the notion of rationality is not defined when the target space is an arbitrary complex manifold.  Already, Pressley--Segal \cite{Pressley1986} point out that the notion of a rational loop group only exists for matrix groups.  So for any representation $\rho:G\to \GL(V)$, we can define the {\bf rational loop group associated to $\rho$} to be
\[
\rlg(G,\rho):=\left\{g\in \mcl(G)\mid \rho\circ g:\cp{1}\to  \End(V) \text{ is meromorphic}\right\}.
\]
The notation $\rlg(G,V)$ will also be used.

\begin{exams}\hspace{1cm}
\begin{enumerate}
\item Let $G=\C^*$ and $\rho_k:\C^*\to \GL(\C); z\mapsto z^k$. Then 
\[
\rlg(\C^*,\rho_k):=\left\{g:\cp{1}\to \C\mid g^k \text{ is rational}\right\}.
\]
\item $\rlg(G,1)=\mcl(G)$, where $1$ denotes the trivial representation.
\item $\rlg(G,\rho\oplus\sigma)=\rlg(G,\rho)\cap \rlg(G,\sigma)$ for all representations $\rho$ and $\sigma$.
\item Let $G=\SL(3,\C)$. Then the loop
\[
\la\mapsto \bp \la^{\frac{1}{3}}&0&0\\ 0&\la^{\frac{1}{3}}&0\\0&0&\la^{-\frac{2}{3}}\ep
\]
is rational with respect to the adjoint representation, but not with respect to the standard representation.
\end{enumerate}
\end{exams}

\begin{lem}\label{lem:ratlem}
The loop $f \in \mcl(G)$ is meromorphic at $\la_0$ with respect to $\rho$ if and only if for all $v \in V$,
\[
\la \mapsto \rho(f(\lambda)) v
\]
is meromorphic at $\lambda_0$.  In fact, it is enough to only consider a basis of $V$.
\end{lem}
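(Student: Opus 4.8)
The plan is to reduce everything to an entrywise statement after fixing a basis, since meromorphy of a matrix- or vector-valued function is a coordinatewise notion. Recall that an $\End(V)$-valued function $F$ is meromorphic at $\lambda_0$ exactly when, in a local coordinate centred at $\lambda_0$, some power $(\lambda-\lambda_0)^k F(\lambda)$ with $k\geq 0$ extends holomorphically across $\lambda_0$; the identical characterisation applies to a $V$-valued map, and in either case it is equivalent to meromorphy of every matrix entry, respectively every component, in a fixed basis. Fixing a basis $e_1,\dots,e_n$ of $V$, I would record the elementary but decisive observation that the $j$-th column of the matrix representing $\rho(f(\lambda))$ in this basis is precisely the coordinate vector of $\rho(f(\lambda))e_j$; thus the columns of the matrix are exactly the images of the basis vectors.

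For the forward implication, assume $\rho\circ f$ is meromorphic at $\lambda_0$, so all entries of its matrix are. For any fixed $v\in V$ with constant coordinates $v_1,\dots,v_n$ we have $\rho(f(\lambda))v=\sum_j v_j\,\rho(f(\lambda))e_j$, whose components are finite linear combinations of matrix entries with $\lambda$-independent coefficients; each is therefore meromorphic at $\lambda_0$, and so is $\lambda\mapsto\rho(f(\lambda))v$. This simultaneously proves the ``enough to consider a basis'' clause, since the passage from a basis to an arbitrary $v$ is exactly this linearity with constant coefficients.

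For the converse it then suffices to assume only that $\lambda\mapsto\rho(f(\lambda))e_j$ is meromorphic at $\lambda_0$ for each basis vector. By the column identification this says that every column of the matrix of $\rho(f(\lambda))$ is meromorphic there, hence every entry is, hence $\rho\circ f$ is meromorphic at $\lambda_0$; combining with the forward implication closes the circle of equivalences.

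There is no genuine obstacle, as the statement is linear-algebraic bookkeeping, but the single point requiring care is to clear the poles of the finitely many basis images uniformly. Concretely, if $(\lambda-\lambda_0)^{k_j}\rho(f(\lambda))e_j$ is holomorphic for each $j$, then with $k=\max_j k_j$ the function $(\lambda-\lambda_0)^k\rho(f(\lambda))$ is holomorphic, so the total pole order is finite. This uniform clearing, valid because $\dim V<\infty$, is what legitimises reducing the matrix statement to the basiswise one.
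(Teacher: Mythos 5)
Your proof is correct. Note that the paper itself offers no proof of this lemma at all --- it is stated and immediately used, the authors evidently regarding it as elementary linear algebra --- so there is no argument to compare against; your write-up simply supplies the standard reasoning they left implicit: columns of the matrix of $\rho(f(\lambda))$ in a fixed basis are the images of the basis vectors, constant-coefficient linear combinations preserve meromorphy, and finite-dimensionality of $V$ lets you clear the poles of the finitely many basis images with a single uniform power of $(\lambda-\lambda_0)$. The one point worth keeping explicit, which you do handle by working in a local coordinate centred at $\lambda_0$, is that $\lambda_0$ may be the point at infinity of $\C\pr^1$, where the coordinate $1/\lambda$ must be used.
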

Let $G$ be connected complex reductive and $\tau$ an antiholomorphic involution on $G$ whose fixed point group $G^\tau$ is a compact real form of $G$. A loop $g$ is said to satisfy the {\bf reality} condition with respect to $\tau$ if
\[
\tau(g(\bar \la))=g(\la).
\]
If $\sigma$ additionally is an holomorphic involution on $G$ commuting with $\tau$ and $K=G^{\tau,\sigma}$ denotes the fixed point set of both $\tau$ and $\sigma$, 
then $G^\tau/K$ is a symmetric space. We say that $g$ is {\bf twisted} with respect to $\sigma$ if 
\[
\sigma(g(-\la))=g(\la).
\]
A loop $g$ is {\bf negative} if it is holomorphic at $\infty$ and $g(\infty)=e$; it is {\bf positive} if it is holomorphic on all of $\C$.
We use superscripts for the reality and twisting conditions, and subscripts for negativity and positivity. As before, the representation will occur in the notation to indicate rationality. For example, the group of negative rational loops satisfying both reality and twisting is denoted by $\rlgmtw(G,\rho)$ or $\rlgmtw(G,V)$.

\section{Simple elements}\label{sec:simpleelements}
As before, let $G$ be a connected complex reductive Lie group, i.e.~the complexification of some compact real Lie group. In the following, we fix an antiholomorphic involution $\tau$ of $G$ giving rise to the compact real form $G^\tau$.    Let $\rho:G\to \GL(V)$ be a representation. We assume that $\rho$ is almost faithful, i.e.~has discrete kernel. Note that because $G$ is reductive, $\rho$ is completely reducible.  In this section we formulate the notion of a simple element in $\rlgm(G,\rho)$. 
\begin{defn}
A semisimple element $H \in \g$ is $\rho$-{\bf integral} if $\rho(H)\in \End(V)$ has only integer eigenvalues.
\end{defn} 
Fix a maximal torus $T\subset G^\tau$ with Lie algebra $\ft$ and denote the complexification of $\ft$ by $\h\subset \g$.  The elements of $\h$ are semisimple and commute and thus $\h$ induces the decomposition 
\be\label{eqn:weightdecomp}
V=\bigoplus_{\mu\in \Phi_{\rho}} V_\mu
\ee
into simultaneous eigenspaces.  For any $\mu \in \h^*$ the subspace $V_{\mu} \subset V$ consists of those vectors $v$ such that $\rho(H)v=\mu(H)v$ for all $H \in \h$.  The indexing set $\Phi_{\rho} \subset \h^*$ is the set of {\bf weights}, i.e.~those $\mu$ for which $V_\mu$ is nontrivial.

Recall that because $G^\tau$ is compact, there is a $G^\tau$-invariant hermitian inner product on $V$, with respect to which the elements of $\g^\tau$ become skew-hermitian operators on $V$. Consequently, $\rho$-integral elements in $\h$ are necessarily contained in the space $i\ft\subset \g$. Note that $H\in \h$ is $\rho$-integral if and only if $\mu(H)\in \Z$ for all weights $\mu$. Using $\rho$-integral elements we can define simple elements, depending on the chosen representation:
\begin{defn}
For any $\alpha \in \C \setminus \R$ and $\rho$-integral $H\in i\g^\tau$, the loop 
\[
p_{\alpha,H}(\lambda)=\exp\left(\ln\left(\frac{\la-\alpha}{\la - \bar\alpha}\right)H\right),
\]
where $\ln$ denotes the natural logarithm, is a {\bf simple element.} 
\end{defn} 

The assumption that $H\in i\g^\tau$ is needed since we want our simple elements to satisfy the $\tau$-reality condition:
\begin{lem}\label{lem:reality}
For any $\rho$-integral $H\in i\g^\tau$ and $\alpha\in \C\setminus \R$, the simple element
$p_{\alpha,H}$
satisfies the reality and normalization conditions.
\end{lem}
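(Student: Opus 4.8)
The plan is to verify the two required conditions---the reality condition $\tau(p_{\alpha,H}(\bar\la))=p_{\alpha,H}(\la)$ and the normalization $p_{\alpha,H}(\infty)=e$---directly from the explicit formula for the simple element. By Lemma \ref{lem:ratlem} it would in principle suffice to check these after applying $\rho$ and evaluating on a basis of weight vectors, but here the group-theoretic statement is clean enough that I would argue at the level of $G$ itself, using the weight decomposition only to interpret $\exp$ concretely.

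First I would establish the normalization. As $\la\to\infty$ the scalar $\frac{\la-\alpha}{\la-\bar\alpha}\to 1$, so $\ln\left(\frac{\la-\alpha}{\la-\bar\alpha}\right)\to 0$ and hence $p_{\alpha,H}(\infty)=\exp(0)=e$. I should be slightly careful that the branch of $\ln$ is chosen consistently so that this limit is genuinely $0$ rather than a nonzero multiple of $2\pi i$; since $H$ is $\rho$-integral, any ambiguity by $2\pi i\Z$ is invisible after applying $\rho$ (because $\exp(2\pi i\,\rho(H))=\Id$ when $\rho(H)$ has integer eigenvalues), so the normalization holds unambiguously as a loop into $\GL(V)$.

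For the reality condition I would use that $\tau$ is an antiholomorphic involutive automorphism of $G$. The key structural input is that $H\in i\g^\tau$, so $\tau(H)=-H$ at the Lie algebra level (differentiating $\tau$ and using that $\g^\tau$ is the $+1$-eigenspace and $i\g^\tau$ the $-1$-eigenspace of $d\tau$). Applying $\tau$ to $p_{\alpha,H}(\bar\la)=\exp\left(\ln\left(\frac{\bar\la-\alpha}{\bar\la-\bar\alpha}\right)H\right)$ and using that $\tau$ is antiholomorphic---so it conjugates the scalar coefficient---together with $\tau\circ\exp=\exp\circ\, d\tau$, I get
\[
\tau\bigl(p_{\alpha,H}(\bar\la)\bigr)=\exp\left(\overline{\ln\left(\frac{\bar\la-\alpha}{\bar\la-\bar\alpha}\right)}\,d\tau(H)\right)=\exp\left(\overline{\ln\left(\frac{\bar\la-\alpha}{\bar\la-\bar\alpha}\right)}\,(-H)\right).
\]
The remaining task is purely a computation with the scalar: I would check that $\overline{\ln\left(\frac{\bar\la-\alpha}{\bar\la-\bar\alpha}\right)}=-\ln\left(\frac{\la-\alpha}{\la-\bar\alpha}\right)$, which follows since complex conjugation sends $\bar\la-\alpha\mapsto \la-\bar\alpha$ and $\bar\la-\bar\alpha\mapsto\la-\alpha$, so the conjugated ratio is the reciprocal of the original ratio and $\overline{\ln(w)}=\ln(\bar w)=-\ln(1/\bar w)$. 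Substituting this back yields exactly $\exp\left(\ln\left(\frac{\la-\alpha}{\la-\bar\alpha}\right)H\right)=p_{\alpha,H}(\la)$, as desired.

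The main obstacle, and the only genuinely delicate point, is branch-of-logarithm bookkeeping: the formula involves $\ln$ of a $\C^*$-valued function on $\cp{1}$, which has no global single-valued branch, so the identity $\overline{\ln w}=-\ln(1/\bar w)$ must be justified up to the $2\pi i\Z$ ambiguity and then shown to be irrelevant. The $\rho$-integrality of $H$ is precisely what neutralizes this ambiguity, since shifting the logarithm by $2\pi i k$ changes $\exp(\cdots H)$ by $\exp(2\pi i k\,H)$, which acts trivially in $\rho$. I would therefore phrase the verification so that all equalities of $\exp$-expressions are understood after applying $\rho$, where they become honest equalities of meromorphic $\End(V)$-valued functions, and the reality and normalization conditions are by definition conditions on $\rho\circ p_{\alpha,H}$ anyway.
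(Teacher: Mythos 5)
Your proof is correct and follows essentially the same route as the paper's: both verify normalization at $\infty$ directly and establish the reality condition by applying $\tau$, using antiholomorphicity to conjugate the scalar coefficient, $\tau(H)=-H$, and the identity $\overline{\ln\bigl(\tfrac{\bar\la-\alpha}{\bar\la-\bar\alpha}\bigr)}=\ln\bigl(\tfrac{\la-\bar\alpha}{\la-\alpha}\bigr)$. Your additional bookkeeping about branches of $\ln$ and how $\rho$-integrality neutralizes the $2\pi i\Z$ ambiguity is a point the paper glosses over, but it refines rather than changes the argument.
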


\begin{proof} Simple elements are obviously normalized at infinity. The reality condition is satisfied because
\begin{align*} 
\tau (p_{\al,H}(\bar{\lambda})) &=\tau\exp\left(\ln\left(\frac{\bar\la-\alpha}{\bar\la - \bar\alpha}\right)H\right)
=\exp\left(\ol{\ln\left(\frac{\bar\la-\alpha}{\bar\la - \bar\alpha}\right)}\tau(H)\right)\\
&= \exp\left(-\ln\left(\frac{\la-\bar\alpha}{\la -\alpha}\right) H\right)= p_{\al,H}(\lambda),
\end{align*}
where we used that $\tau(H)=-H$.
\end{proof}
For the following lemma, let $\sigma$ be a holomorphic involution commuting with $\tau$ and $\g^\tau=\lk\oplus \lp$ the corresponding Cartan decomposition, that is, $\lk$ is the $+1$-eigenspace of $\sigma$ and $\lp$ is the $-1$-eigenspace.
\begin{lem}\label{lem:twisting}
For any $\rho$-integral element $H\in i\lp$ and $r\in \R\setminus\{0\}$, the simple element $p_{ir,H}$ satisfies the $\sigma$-twisting condition.
\end{lem}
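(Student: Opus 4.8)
The plan is to mimic the computation in the proof of Lemma~\ref{lem:reality}, exploiting the two special features of the hypotheses: that the pole $\alpha=ir$ is purely imaginary (so that $\bar\alpha=-\alpha$), and that $H$ lies in $i\lp$, which $\sigma$ sends to its negative. First I would record the two algebraic facts that drive everything. Since $\sigma$ is a holomorphic involution, its differential is complex linear, so for $H=iK$ with $K\in\lp$ we have $\sigma(H)=i\sigma(K)=-iK=-H$; thus $\sigma$ acts as $-1$ on $i\lp$. Second, because $r$ is real we have $\bar\alpha=\overline{ir}=-ir=-\alpha$, so the M\"obius factor obeys $\frac{-\lambda-\alpha}{-\lambda-\bar\alpha}=\frac{-\lambda-ir}{-\lambda+ir}=\frac{\lambda+ir}{\lambda-ir}$, which is the reciprocal of the argument $\frac{\lambda-\alpha}{\lambda-\bar\alpha}=\frac{\lambda-ir}{\lambda+ir}$ occurring at $\lambda$.

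Next I would substitute $-\lambda$ into the simple element and apply $\sigma$. Using that a group automorphism intertwines $\exp$ with its differential, $\sigma(\exp X)=\exp(\sigma X)$, the computation reads
\[
\sigma\bigl(p_{ir,H}(-\lambda)\bigr)=\exp\left(\ln\!\left(\frac{\lambda+ir}{\lambda-ir}\right)\sigma(H)\right)=\exp\left(-\ln\!\left(\frac{\lambda-ir}{\lambda+ir}\right)(-H)\right)=p_{ir,H}(\lambda),
\]
where the reciprocal-of-the-argument identity and the relation $\sigma(H)=-H$ each contribute a sign change, and the two cancel. This is exactly the twisting condition $\sigma(g(-\lambda))=g(\lambda)$.

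There is no serious obstacle here; the content lies entirely in the choices $\alpha$ purely imaginary and $H\in i\lp$, which are precisely what make the two sign changes conspire to cancel. The only point requiring mild care is the branch of the logarithm: as in Lemma~\ref{lem:reality}, one should read $\ln$ as the principal branch and check that passing to the reciprocal of the M\"obius argument changes its sign consistently on the open dense set where the loop is defined, so that $\ln(w^{-1})=-\ln w$ may be applied there. Equivalently, I would sidestep logarithms entirely by diagonalizing $\rho(H)$ on the weight spaces $V_\mu$ of \eqref{eqn:weightdecomp}, on which $p_{ir,H}$ acts by the scalar $\bigl(\frac{\lambda-ir}{\lambda+ir}\bigr)^{\mu(H)}$ with $\mu(H)\in\Z$ by $\rho$-integrality, and verify the twisting condition one weight at a time; this also makes transparent that $H\in i\lp$ is the hypothesis that pins $\mu\bigl(\sigma(H)\bigr)=-\mu(H)$.
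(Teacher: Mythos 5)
Your proposal is correct and takes essentially the same route as the paper: substitute $-\lambda$ into $p_{ir,H}$, use $\overline{ir}=-ir$ to turn the M\"obius factor into its reciprocal, and use $\sigma(H)=-H$ so that the two sign changes cancel, which is precisely the paper's one-line computation. Your additional justifications (complex linearity of $\ed\sigma$ giving $\sigma(H)=-H$ on $i\lp$, the branch-of-logarithm remark, and the weight-space alternative) are sound but go beyond what the paper records.
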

\begin{proof} Using $\sigma(H)=-H$ we calculate
\[\sigma (p_{ir,H}(-\la))=\exp\left(\ln\left(\frac{-\la-ir}{-\la + ir}\right)\sigma(H)\right)=\exp\left(-\ln\left(\frac{\la+ir}{\la - ir}\right)H\right)=p_{ir,H}(\la).
\]
\end{proof}
Simple elements with $H\in i\ft$ act diagonally with respect to the $\ft$-weight decomposition:

\begin{prop}\label{prop:simple} Let $H\in i\ft$ be $\rho$-integral. 
The action of the simple element $p_{\alpha,H}$ is given by
\[
\rho \circ p_{\alpha,H}(\la) = \sum_{\mu \in \Phi_{\rho}} \left( \frac{\la -\alpha}{\la - \bar\alpha} \right)^{\mu(H)} \pi_{V_{\mu}}  
\]
where the $\pi_{V_{\mu}}$ are projections with respect to the decomposition \eqref{eqn:weightdecomp}.
\end{prop}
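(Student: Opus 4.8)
The plan is to push the representation through the exponential and then diagonalise. Since $\rho$ is a Lie group homomorphism, its differential intertwines the exponential maps, so writing $c(\la)=\ln\!\left(\frac{\la-\alpha}{\la-\bar\alpha}\right)$ we obtain
\[
\rho\circ p_{\alpha,H}(\la)=\rho\Bigl(\exp\bigl(c(\la)\,H\bigr)\Bigr)=\exp\bigl(c(\la)\,\rho(H)\bigr),
\]
where $\rho(H)\in\End(V)$ is the induced infinitesimal action, as in the definition of $\rho$-integrality. Because $H\in i\ft\subset\h$, the very definition of the weight spaces says that $\rho(H)$ acts on $V_\mu$ as the scalar $\mu(H)$; with respect to the decomposition \eqref{eqn:weightdecomp} this reads $\rho(H)=\sum_{\mu\in\Phi_\rho}\mu(H)\,\pi_{V_\mu}$.

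The key computation is then the exponential of this diagonal operator. The $\pi_{V_\mu}$ are the projections attached to the direct sum \eqref{eqn:weightdecomp}, hence mutually orthogonal idempotents with $\pi_{V_\mu}\pi_{V_\nu}=\delta_{\mu\nu}\pi_{V_\mu}$ and $\sum_\mu\pi_{V_\mu}=\Id$. Orthogonality gives $\bigl(\sum_\mu a_\mu\pi_{V_\mu}\bigr)^k=\sum_\mu a_\mu^{\,k}\pi_{V_\mu}$ for all $k\ge1$, so summing the exponential series termwise yields $\exp\bigl(\sum_\mu a_\mu\pi_{V_\mu}\bigr)=\sum_\mu e^{a_\mu}\pi_{V_\mu}$. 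Taking $a_\mu=c(\la)\,\mu(H)$ gives
\[
\rho\circ p_{\alpha,H}(\la)=\sum_{\mu\in\Phi_\rho}\exp\bigl(c(\la)\,\mu(H)\bigr)\,\pi_{V_\mu}.
\]

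It remains to simplify the scalar coefficients, and this is the only step where the hypotheses genuinely bite. Because $H$ is $\rho$-integral, each $\mu(H)$ is an integer, so $\exp\bigl(c(\la)\,\mu(H)\bigr)=\bigl(\frac{\la-\alpha}{\la-\bar\alpha}\bigr)^{\mu(H)}$ is an unambiguous integer power of $\frac{\la-\alpha}{\la-\bar\alpha}$, independent of the branch of $\ln$ and in particular a rational function of $\la$; this both yields the stated formula and confirms that $\rho\circ p_{\alpha,H}$ is meromorphic, so that $p_{\alpha,H}\in\rlgm(G,\rho)$. I do not expect a real obstacle here: the substance is the diagonalisation of $\rho(H)$ together with the termwise exponentiation of commuting projections, and the lone subtlety is the use of integrality to make the rational power well defined. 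As a sanity check the formula collapses, in the two-weight-space situation, to the $\CSp(n,\C)$ simple element $p_{\alpha,W}$ recorded in the introduction.
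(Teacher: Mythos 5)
Your proof is correct and takes essentially the same route as the paper's: diagonalise the action of $H$ via the weight decomposition, exponentiate scalar-wise on each $V_\mu$, and invoke integrality of $\mu(H)$ to get well-defined rational powers (and hence membership in $\rlgm(G,\rho)$). The only difference is presentational: the paper argues vector-by-vector on $v\in V_\mu$, leaving the intertwining $\rho\circ\exp=\exp\circ\,\mathrm{d}\rho$ and the projection algebra implicit, whereas you spell these out as an operator identity.
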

\begin{proof}
For any weight vector $v \in V_{\mu}$, 
\begin{align*}
p_{\alpha,H}(\la) \cdot v&= \exp\left(\ln\left(\frac{\la-\alpha}{\la - \bar\alpha}\right)H\right)v \\
&= \exp\left(\ln\left(\frac{\la-\alpha}{\la - \bar\alpha}\right) \mu(H) \right)v=\left( \frac{\la -\alpha}{\la - \bar\alpha} \right)^{\mu(H)} v.
\end{align*}
By Lemma \ref{lem:ratlem}, the loop $p_{\alpha,H}$ is rational with respect to $\rho$ because $\mu(H) \in \Z$.
\end{proof}

Any $\rho$-integral element $H\in i\g^\tau$ is $G^\tau$-conjugate to one in $i\ft$.  In particular, the proposition shows that simple elements are rational, i.e.~$p_{\al,H}\in \rlgm(G,\rho)$.  Since $\rho$ is almost faithful, the $\rho$-integral elements in $i\ft$ 
form a lattice in $i\ft$. If we choose an integral basis $H_1,\ldots,H_r$ for this lattice, any simple element of the form $p_{\alpha,H}$ with $H\in i\ft$ is a product of the $p_{\alpha,H_i}$, since $\exp$ is a homomorphism on $\h$.

\begin{exam} \label{exam:un}
Let $G=\GL(n,\C)$. The automorphism 
\[
\tau(g)=\hd g^{-1}.
\]
(where $\hd g = \trp \bar{g}$) gives rise to the compact real form $G^\tau=\Un(n)$.
Choose the maximal torus consisting of diagonal elements.  The weights are $\mu_1, \ldots, \mu_n$ where
\[
\mu_i(X)=X_{ii}.
\]
for any diagonal matrix $X$.  The corresponding weight spaces are $L_i=\C \cdot e_i$ where $e_1,\ldots,e_n$ are the standard basis vectors of $\C^{n}$.  Let $H_i$ be the diagonal matrices dual to the $\mu_i$.  The $\{H_i\}$ form a basis of the lattice of $\rho$-integral elements.  From Proposition \ref{prop:simple} it follows that
\be\label{eqn:unsimple}
\rho \circ \exp\left(\ln\left(\frac{\la-\alpha}{\la - \bar\alpha}\right) H_i \right) =\left(\frac{\la-\alpha}{\la - \bar\alpha} \right) \pi_{L_i}+\pi_{K_i}=\Id+\frac{\bar\alpha - \alpha}{\la - \bar\alpha}\pi_{L_i}
\ee
where $K_i=L_1 \oplus \ldots \oplus \hat{L_i}\oplus \ldots  \oplus L_n$, and the projections are defined with respect to the hermitian orthogonal decomposition of $\C^n$ into the $L_j$. Conjugating these with $\Un(n)$, we see that the loops of the form
\[
\Id+ \frac{\bar\alpha -\alpha}{\la - \bar\alpha}\pi_{L},
\]
where $L\subset \C^n$ is any complex line and $\pi_L$ is the hermitian projection onto $L$, are simple elements. 
By multiplying several of these for which the complex lines are orthogonal, we obtain loops of the form 
\[
\Id + \frac{\bar\alpha -\alpha}{\la - \bar\alpha}\pi_{W},
\]
where $\pi_W$ is the hermitian projection onto a complex subspace $W$; 
these are the simple elements introduced in \cite{Uhlenbeck1989}.
\end{exam}
\begin{exam}
In the case that $\rho$ is the adjoint representation of a complex Lie group $G$ admitting parabolic subalgebras of height one, our simple elements generalise those considered in \cite{Donaldson2006}. Grading elements, in particular canonical elements of pairs of complementary parabolic subalgebras, are $\Ad$-integral.
\end{exam}
\begin{exam}
Consider a pair of representations $\rho_1:G \to \GL(V_1)$ and $\rho_2:G \to \GL(V_2)$ such that $0$ is a weight for $V_1$. Then the simple elements for $V_1 \otimes V_2$ are a subset of those for $V_2$. 
\end{exam}
\begin{exam}
Consider the case of $\SL(2,\C)$.  There is an irreducible representation $V_n$ of dimension $n$ and $V_n \subset Sym^{n-1}(\C^2)$.  In this case the simple elements only depend on whether $n$ is odd or even.
\end{exam}




We now turn our attention to the question of finding generators for the rational loop groups.
In Sections \ref{sec:genso} and \ref{sec:geng2} we will prove that the simple elements generate the rational loop groups $\rlgm(\SO(n,\C),\C^n)$ and $\rlgm(\GGC,\C^7)$. We think that it is plausible that the simple elements generate the negative rational loop group whenever $\rho$ is the complexification of an orthogonal representation, i.e.~$V=W\otimes \C$, $\rho(G^\tau)\subset \Or(W)$ (and consequently $\rho(G)\subset \Or(V)$). 
However, if the representation is not orthogonal, we believe the same statement will only be true if we pass to the extension by the center of $\GL(V)$, i.e.~the conformalization of $G$. An example of this will be given in Section \ref{sec:gencsp}. There we show that the simple elements generate $\rlgm(\CSp(n,\C),\C^{2n})$, where $\CSp(n)$ is the conformalization of $\Sp(n,\C)$.

\section{$\CSp(n,\C)$}

\subsection{Generating $\rlgm(\CSp(n,\C),\C^{2n})$} \label{sec:gencsp}
In this section, we prove that for the standard representation of $\CSp(n,\C)$, which is $\C^{2n}$ by our indexing, the simple elements defined in Section \ref{sec:simpleelements} generate the rational loop group $\rlgm(\CSp(n,\C),\C^{2n})$.  

Let $\omega$
be the standard symplectic form on $\C^{2n}$.    If
\[ J=\bp 0 & I_n \\ -I_n & 0 \ep\] 
then 
\[  \om(v,w)=\trp v J w.\]
The simple group $\Sp(n,\C)$ is the group of (invertible) matrices that preserve $\om$ and the conformalization of $\Sp(n,\C)$ is the extended group
\[
\CSp(n,\C)=\{A\in \GL(2n,\C)\mid A^*\omega=c\cdot \omega \text{ for some }c\in \C\}.
\]
We will make use of the standard hermitian inner product $(v,w)=\trp \bar{v} w$ and the relation 
\[
(v,w)=\omega(\ol{v},Jw).
\]
The Lie algebra of $\CSp(n,\C)$ is 
\[
\csy(n,\C)=\left\{\left. \bp A & B \\ C & -\trp A \ep + a\cdot \Id\ \right|\  \trp B=B, \trp C=C, a\in \C \right\}.
\]
The involution $\tau(g)=\hd g^{-1}$ of $\CSp(n,\C)$ gives rise to the compact real form $\CSp(n)=\Sp(n)\times S^1$.
It contains the $n+1$-dimensional torus consisting of diagonal matrices in $\CSp(n)$ whose complexification has the Lie algebra
\[
\h = \left\{ \left. \bp A & 0 \\ 0 & -A\ep + a\cdot \Id\ \right| \ A \text{ is diagonal}, a\in \C\right\}.
\]
The weights of the standard representation of $\CSp(n,\C)$ on $\C^{2n}$ are $\mu_1,\ldots,\mu_{2n}$, where $\mu_i(X)=X_{ii}$, and the corresponding weight spaces are spanned by the standard basis elements $e_1,\ldots,e_{2n}$.

Let $H=\bp I_n & 0 \\ 0 & 0\ep \in \h$. By Propositition \ref{prop:simple}, the simple element corresponding to the $\rho$-integral element $H$ is given by
\[
\exp\left(\ln\left(\frac{\la-\al}{\la-\ol{\al}}\right)H\right)=\left(\frac{\la-\al}{\la-\ol{\al}}\right)\pi_{W_0} + \pi_{W_0^\perp},
\]
where $W_0=e_1\w \ldots \w e_n$ is Lagrangian. Conjugating this with $\CSp(n)$, we obtain that for any Lagrangian subspace $W\subset \C^{2n}$, the loop
\[
p_{\al,W}(\la)=\left(\frac{\la-\al}{\la-\ol{\al}}\right)\pi_W + \pi_{W^\perp},
\]
is a simple element. Note that the hermitian complement $W^\perp$ of $W$ is $J\ol{W}$. 
\begin{thm} \label{thm:gencsp} The simple elements $p_{\al,W}$, where $\al\in \C\setminus \R$ and $W\subset \C^{2n}$ is a Lagrangian subspace, generate $\rlgm(\CSp(n,\C),\C^{2n})$.
\end{thm}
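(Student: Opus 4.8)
The plan is to adapt Uhlenbeck's strategy: given an arbitrary loop $g\in\rlgm(\CSp(n,\C),\C^{2n})$, I would argue by induction on the number and order of its poles, at each stage peeling off a simple factor $p_{\al,W}$ so that the product $p_{\al,W}^{-1}g$ has strictly smaller singularity data, until nothing is left but the normalized constant loop $\Id$. Concretely, I would pick a pole $\al$ of $\rho\circ g$ (necessarily in $\C\setminus\R$ after accounting for the reality condition, and in conjugate pairs) and expand $g$ in a Laurent series about $\la=\al$,
\[
(\rho\circ g)(\la)=\frac{A_{-k}}{(\la-\al)^k}+\cdots+\frac{A_{-1}}{\la-\al}+A_0+\cdots,
\]
with $A_{-k}\neq 0$. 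The goal at each step is to choose a Lagrangian $W$ so that multiplying by $p_{\al,W}^{-1}=\frac{\la-\ol\al}{\la-\al}\pi_W+\pi_{W^\perp}$ lowers the order $k$.

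\textbf{Key steps.} First I would translate the reality condition $\tau(g(\bar\la))=g(\la)$, i.e.\ $\hd{g(\bar\la)}\,g(\la)=\Id$ (together with the $\CSp$-condition $A^*\om=c\,\om$), into algebraic constraints on the Laurent coefficients $A_j$; in particular the leading coefficient $A_{-k}$ and the image/kernel structure it determines should be compatible with the symplectic form $\om$ and the hermitian metric. I expect this to force $\operatorname{im}A_{-k}$ to be an \emph{isotropic} subspace and to pin down the relationship between $\ker A_{-k}$ and $(\operatorname{im}A_{-k})^\perp$ or $J\,\overline{\operatorname{im}A_{-k}}$. Second, I would take $W$ to be (the Lagrangian extension of) $\operatorname{im}A_{-k}$, or a suitable Lagrangian containing it, and compute the leading behaviour of $p_{\al,W}^{-1}\cdot(\rho\circ g)$ near $\al$. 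Because $p_{\al,W}^{-1}$ contributes a simple zero $\frac{\la-\ol\al}{\la-\al}$ exactly on $W\supseteq\operatorname{im}A_{-k}$ and acts as the identity on $W^\perp$, the $(\la-\al)^{-k}$ term should cancel, reducing the pole order at $\al$ while—crucially—not creating a worse pole at the conjugate point $\ol\al$ or elsewhere. Third, since such a factorization typically changes the pole at $\ol\al$ as well, I would treat the conjugate pair $\{\al,\ol\al\}$ together and check that the total degree (sum of pole orders over all of $\cp1$) strictly decreases, guaranteeing termination.

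\textbf{The main obstacle} is the second step: showing that the naive choice $W=\operatorname{im}A_{-k}$ actually works when that image is merely isotropic rather than Lagrangian, and that one can always enlarge it to a Lagrangian $W$ for which $p_{\al,W}^{-1}g$ still has reduced order \emph{and} still satisfies the reality and $\CSp$-conditions. As the introduction warns, ``the linear algebra associated to each group and representation alters the nature of the details,'' and here the symplectic structure plus the conformal factor $c\in\C^*$ are what distinguish this from the $\GL(n,\C)$ case. I anticipate needing a lemma that the leading coefficient of a reality- and symplectic-constrained loop has a prescribed normal form (e.g.\ $A_{-k}$ is, up to the conformal scalar, a partial isometry onto a Lagrangian with kernel its hermitian complement), which then makes the single factor $p_{\al,W}$ with $W$ that Lagrangian reduce the order in one stroke. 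Verifying that this normal form holds, and that the reduced loop remains in $\rlgm(\CSp(n,\C),\C^{2n})$ so the induction hypothesis applies, is where the real work lies; the bookkeeping of pole orders and the termination argument should then be routine.
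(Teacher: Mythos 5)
Your overall strategy (Uhlenbeck-style peeling of simple factors, with induction on singularity data) is indeed the paper's, but two of your key steps fail, and they fail precisely at the points where $\CSp(n,\C)$ differs from $\GL(n,\C)$. First, your expectation that the reality and symplectic conditions force $\Im A_{-k}$ to be isotropic is wrong. Writing $\om(g(\la)v,g(\la)w)=c_g(\la)\,\om(v,w)$ and expanding, the conformal factor $c_g$ has a pole at $\al$ of order at most $2k$; when that order is exactly $2k$, the leading coefficient satisfies $\om(g_{-k}v,g_{-k}w)=c_0\,\om(v,w)$ with $c_0\neq 0$, so $g_{-k}$ is \emph{invertible}: its image is all of $\C^{2n}$ and is contained in no Lagrangian, and no choice of $W$ can lower the pole order. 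The paper handles this case by multiplying with $p_{\al,W}$ for an \emph{arbitrary} Lagrangian $W$, which lowers not the order but the \emph{rank} of the leading coefficient, and accordingly the induction runs on the lexicographically ordered pair $(k,\rank g_{-k})$ rather than on $k$ alone. Your proposed normal-form lemma ($A_{-k}$ a partial isometry onto a Lagrangian) is false for the same reason; in the genuinely isotropic case the image may be any isotropic subspace of dimension less than $n$, and there your plan (enclose it in a Lagrangian) agrees with the paper.

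Second, your termination argument does not work as stated. The factor $p_{\al,W}$ has its own pole, at $\bar\al$, so left multiplication by it lowers the order at $\al$ by one but generically raises the order at $\bar\al$ by one: the sum of pole orders over $\cp{1}$ need not decrease, so it cannot serve as your induction counter. (You also have the factors reversed: $p_{\al,W}^{-1}=\frac{\la-\bar\al}{\la-\al}\pi_W+\pi_{W^\perp}$ has a simple \emph{pole}, not a zero, at $\al$; it is $p_{\al,W}$ itself that annihilates a pole whose leading image lies in $W$.) The paper's resolution is a two-phase argument built on the observation that a loop satisfying the reality condition is holomorphic \emph{and group-valued} at $\al$ if and only if it is so at $\bar\al$. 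One first removes the pole at $\al$, ignoring any damage done at $\bar\al$; the resulting loop may then fail to be invertible at $\al$ (even $g_0=0$ is possible), and one removes this zero by multiplying with $p_{\al,W}^{-1}$ for Lagrangians $W$ with $\Im g_0\subset W^\perp$, using the order of vanishing of $\det g(\la)$ at $\al$ — which drops by $n$ at each such step — as the counter. Once the loop is holomorphic and invertible at $\al$, reality gives the same at $\bar\al$ for free, and Liouville's theorem finishes. Without the rank refinement, the zero-removal phase, and the determinant bookkeeping, your induction does not close.
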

\begin{proof} Let $g\in \rlgm(\CSp(n,\C),\C^{2n})$ and fix a pole $\al\in \C\setminus \R$.   We  write the Laurant expansion of $g$ in  $\frac{\la-\al}{\la-\bar{\alpha}}$ explicitly as
\be
g(\la)=\sum_{j=-k}^\infty \left(\frac{\la-\al}{\la-\ol{\alpha}}\right)^j g_j
\ee
 with $g_{-k}\neq 0$.  The total degree of the pole at $\al$ is defined to be the pair $(k,\rank g_{-k})$.  We define $(k,n) < (l,m)$ if $k<l$ or $k=l$ and $n<m$.  We proceed using induction on the total degree of each of the finitely many poles of the loop $g$.  The reality condition $\tau(g(\ol{\la}))=g(\la)$ implies that if $g$ is holomorphic at $\al$ (i.e.~ it has no pole and is group valued) then it is also holomorphic at $\ol{\al}$. Our approach is to first remove the pole at $\al$ and then to modify the resulting loop so that it is holomorphic at $\al$ and thus also at $\ol{\al}$.

The fact that $g$ is a map into $\CSp(n,\C)$ means that there is a $\C$-valued map $c_{g}$ satisfying
\be\label{eq:cspcond}
\omega(g(\la)v,g(\la)w)=c_{g}(\la)\cdot\omega(v,w)
\ee
for all $v,w\in \C^{2n}$. 

Inserting the power series expansion of $g$ into \eqref{eq:cspcond}, we see that $c_{g}$ cannot have a pole at $\al$ of order higher than $2k$. If $c_{g}$ has a pole of order exactly $2k$, we get
\[
\omega(g_{-k}v,g_{-k}w)=c_0\cdot \omega(v,w)
\]
for some complex number $c_0\neq 0$. In particular, it follows that $g_{-k}$ is invertible, whereupon we form
\[
\tilde{g}=p_{\al,W} g
\]
for an arbitrary Lagrangian subspace $W$; since the new $-k$-coefficient then has image only $W^\perp$, this reduces the total degree of the pole.

If $c$ has a pole of lower order at $\al$ (or none at all), \eqref{eq:cspcond} reduces to 
\[
\omega(g_{-k}v,g_{-k}w)=0,
\]
i.e.~$\Im g_{-k}$ is $\omega$-isotropic. If we let $W$ be a Lagrangian subspace containing $\Im g_{-k}$, then the product $p_{\al,W}g$ has a pole at $0$ of order at most $k-1$.

We continue this process until we obtain a loop (still referred to as $g$) that no longer has a pole at $\al$  and now show that it can be modified by simple factors so as to make it holomorphic at $\al$. 

If $g_0=0$, we can certainly multiply with elements of the form $p_{\al,W}^{-1}$ until $g_0\neq 0$, so we 
may assume $g_0\neq 0$. As above, condition \eqref{eq:cspcond} then gives that $g_0$ is either invertible 
(in which case we are done) or that its image is $\omega$-isotropic. 

In the second case, consider  the map $\la\mapsto \det(g(\la))$. It is a polynomial in $\frac{\la-\al}{\la-\ol{\al}}$. Denote by $m$ the order of its zero at $\al$.
Let $W$ be a Lagrangian subspace such that $\Im g_0\subset W^\perp$ and define
$\tilde{g}=p_{\al,W}^{-1}g$. Then, $\tilde{g}$ has no pole at $\al$, its value $\tilde{g}(\al)$ is not zero, and the function
\[ \la\mapsto \det(\tilde{g}(\la))=\left(\frac{\la-\al}{\la-\ol{\al}}\right)^{-n} \det(g(\la))\] has a zero at $\al$ of order $m-n$. By induction, we can completely remove this zero and are left with a loop $g$ holomorphic at $\alpha$.

Once all poles are removed, we are left with a pole-free loop on $\cp{1}$ with value $\Id$ at $\infty$. Liouville's theorem implies that this is the identity loop, and we are finished: $g$ has been written as a product of the generators.
\end{proof}

\subsection{Dressing and permutability}

The following theorem is well known for the groups $\GL(n,\C)$ \cite{Uhlenbeck1989} and $\SO(n,\C)$ \cite{Burstall2004}.  We present it in order to be self-contained.
\begin{thm}\label{thm:spdressing}
Let $h \in \mcl(\CSp(n,\C))$ be holomorphic at $\al \in  \C \setminus \R$.  If we define $W'=h(\al)^{-1}W$ then
\[
p_{\al,W}hp^{-1}_{\al,W'}
\]
is again holomorphic at $\al$.
\end{thm}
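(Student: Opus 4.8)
The plan is to compute $F:=p_{\al,W}\,h\,p_{\al,W'}^{-1}$ explicitly near $\al$ and check that its only candidate pole has vanishing residue. Setting $f(\la)=\frac{\la-\al}{\la-\ol\al}$, the two outer factors read
\[
p_{\al,W}(\la)=f(\la)\,\pi_W+\pi_{W^\perp},\qquad p_{\al,W'}^{-1}(\la)=f(\la)^{-1}\pi_{W'}+\pi_{{W'}^\perp},
\]
the second because $p_{\al,W'}$ scales $W'$ by $f$ and fixes ${W'}^\perp$. Since $f$ is holomorphic and vanishing at $\al$ while $f^{-1}$ has a simple pole there, the factor $p_{\al,W}$ and the loop $h$ are both holomorphic at $\al$, so the sole possible source of a pole in $F$ is the factor $p_{\al,W'}^{-1}$. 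Before going further I would confirm that $W'=h(\al)^{-1}W$ is genuinely Lagrangian, so that $p_{\al,W'}$ is a legitimate simple element: as $h(\al)\in\CSp(n,\C)$ satisfies $\om(h(\al)v,h(\al)w)=c\,\om(v,w)$ for some $c\neq0$, the subspace $h(\al)^{-1}W$ is isotropic of dimension $n$, hence Lagrangian.

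Next I would multiply out the three factors. Using that $h$ is holomorphic at $\al$,
\[
F=\pi_W h\pi_{W'}+f\,\pi_W h\pi_{{W'}^\perp}+f^{-1}\pi_{W^\perp}h\pi_{W'}+\pi_{W^\perp}h\pi_{{W'}^\perp}.
\]
Three of these four terms are manifestly holomorphic at $\al$; only the term $f^{-1}\pi_{W^\perp}h\pi_{W'}$ can be singular, and since $f^{-1}$ has a simple pole with residue $\al-\ol\al$ at $\al$, the singular part of $F$ equals
\[
\frac{\al-\ol\al}{\la-\al}\,\pi_{W^\perp}\,h(\al)\,\pi_{W'}.
\]

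The crux --- and the one step that genuinely uses the prescription $W'=h(\al)^{-1}W$ --- is to show this residue vanishes. By the definition of $W'$ we have $h(\al)W'=W$, so the operator $h(\al)\pi_{W'}$ has image contained in $W$; because $W^\perp$ is the hermitian complement of $W$, the projection $\pi_{W^\perp}$ annihilates $W$, and therefore $\pi_{W^\perp}h(\al)\pi_{W'}=0$. Hence the simple pole disappears and $F$ is holomorphic at $\al$, as claimed. I do not expect a real obstacle here: the argument is a short residue computation, and the only places demanding care are verifying that $W'$ is Lagrangian and correctly singling out which of the four product terms is the singular one.
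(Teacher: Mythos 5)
Your residue computation is correct and follows exactly the paper's route: expand the triple product, observe that the only possibly singular term is $f^{-1}\pi_{W^\perp}h\pi_{W'}$, and kill its residue using $h(\al)W'=W$; your verification that $W'$ is Lagrangian is also fine (and slightly more explicit than the paper's). However, your proof stops too early. You have only shown that $p_{\al,W}hp_{\al,W'}^{-1}$ extends holomorphically at $\al$ as a map into $\End(\C^{2n})$. In this paper, ``holomorphic at $\al$'' for a loop means no pole \emph{and} group-valued there (this convention is stated explicitly in the proof of Theorem \ref{thm:gencsp}), and group-valuedness is exactly what is needed for the dressing action: the value at $\al$ must lie in $\CSp(n,\C)$, in particular be invertible. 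Nothing in your argument rules out that the limiting value at $\al$ is a singular matrix --- $\GL(2n,\C)$ is not closed in $\End(\C^{2n})$, so holomorphy of the entries alone does not give this.

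The missing step is short, and it is the second half of the paper's proof. Since $\det p_{\al,W}(\la)=\left(\frac{\la-\al}{\la-\ol{\al}}\right)^{n}$ and $\det p_{\al,W'}^{-1}(\la)=\left(\frac{\la-\al}{\la-\ol{\al}}\right)^{-n}$, these determinants cancel, so $\det\bigl(p_{\al,W}hp_{\al,W'}^{-1}\bigr)(\la)=\det h(\la)$ wherever defined; evaluating at $\al$ gives $\det h(\al)\neq 0$, so the value at $\al$ is invertible. Because $\CSp(n,\C)$ is closed in $\GL(2n,\C)$, that invertible value in fact lies in $\CSp(n,\C)$, and the loop is genuinely holomorphic at $\al$ in the paper's sense. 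With this addition your argument coincides with the paper's proof.
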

\begin{proof}
The compact group $\CSp(n)$ preserves the isotropic condition, so $W'$ is also Lagrangian, and $p_{\al,W'}$ is again a simple factor.  We expand the new loop at $\al$
\[
p_{\al,W}hp^{-1}_{\al,W'}(\la)=\left(\frac{\la - \ol{\al}}{\la - \al}\right)\pi_{W^{\perp}}h(\al)\pi_{W'}+\ldots
\] 
By our choice of $W'$ the pole part vanishes, so it makes sense to evaluate the loop at $\al$. Since $\CSp(n,\C)$ is closed in $\GL(n,\C)$, it only remains to show that the endomorphism of $\C^{2n}$ obtained by evaluating it at $\al$ is invertible. But its determinant is the same as the determinant of $h(\al)$, which is nonzero.
\end{proof}
\begin{cor}
Suppose that $p_{\al,W}$ and $p_{\beta,V}$ are simple factors such that $\al\neq \beta,\bar\beta$. Then
\[
p_{\beta,p_{\al,W}(\beta)V}p_{\al,W}=p_{\al,p_{\beta,V}(\al)W}p_{\beta,V}.
\]
\end{cor}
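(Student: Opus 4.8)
The plan is to exploit the dressing theorem (Theorem \ref{thm:spdressing}) twice, once on each side of the desired identity, and then to observe that the two resulting loops are negative and differ only by holomorphicity data that forces them to coincide. The key point is that $p_{\beta,V}$ is holomorphic at $\al$ (since $\al\neq \beta,\bar\beta$ means $\al$ is neither a pole of $p_{\beta,V}$), and symmetrically $p_{\al,W}$ is holomorphic at $\beta$. Thus the hypotheses of Theorem \ref{thm:spdressing} apply to each factor in turn.

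First I would apply Theorem \ref{thm:spdressing} with $h=p_{\beta,V}$, which is holomorphic at $\al$. Setting $W''=p_{\beta,V}(\al)^{-1}W$, the theorem tells us that $p_{\al,W}\,p_{\beta,V}\,p_{\al,W''}^{-1}$ is holomorphic at $\al$. I would then check directly that this loop is a simple factor of the form $p_{\beta,V'}$ for the appropriate Lagrangian $V'$: away from $\al,\bar\al$ it retains the pole structure of $p_{\beta,V}$, and the conjugation by the simple factor $p_{\al,W}$ permutes the Lagrangian data at $\beta$. To identify $V'$ precisely, I would evaluate the relation at $\la=\al$ and track how the weight-space projections transform; the natural candidate is the Lagrangian $p_{\al,W}(\beta)V$ appearing in the statement. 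This gives
\[
p_{\al,W}\,p_{\beta,V} = p_{\beta,\,p_{\al,W}(\beta)V}\,p_{\al,W''}.
\]
A symmetric computation, interchanging the roles of the two factors, produces the matching identity from the other side.

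The cleanest route, however, is probably to avoid identifying every intermediate Lagrangian by hand and instead argue as follows. Define $\hat p_\beta := p_{\beta,\,p_{\al,W}(\beta)V}$ and $\hat p_\al := p_{\al,\,p_{\beta,V}(\al)W}$, and consider the loop
\[
q(\la) := \hat p_\beta(\la)^{-1}\,p_{\al,W}(\la)\,p_{\beta,V}(\la)\,\hat p_\al(\la)^{-1}.
\]
The strategy is to show $q\equiv \Id$. Since $\al\neq\beta,\bar\beta$, the only possible poles of $q$ lie at $\al,\bar\al,\beta,\bar\beta$. At $\beta$ the dressing theorem, applied with the choice of Lagrangian $p_{\al,W}(\beta)V$, guarantees that $p_{\al,W}\,p_{\beta,V}\,\hat p_\al^{-1}$ is holomorphic and nonvanishing at $\beta$, so left-multiplication by $\hat p_\beta^{-1}$ keeps $q$ holomorphic there; at $\al$ the symmetric application of the theorem does the same. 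The reality condition then removes the poles at the conjugate points $\bar\al,\bar\beta$ automatically. Hence $q$ is holomorphic on all of $\cp{1}$, and since each factor is normalized at $\infty$ we have $q(\infty)=\Id$. By Liouville's theorem $q\equiv\Id$, which is exactly the asserted permutability identity.

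The main obstacle is the bookkeeping in identifying the transformed Lagrangian as precisely $p_{\al,W}(\beta)V$ (and symmetrically $p_{\beta,V}(\al)W$) rather than some other Lagrangian, i.e.\ matching the abstract dressing output $W'=h(\al)^{-1}W$ of Theorem \ref{thm:spdressing} to the explicit evaluation appearing in the statement. This requires carefully comparing the order of multiplication and the sign conventions in $p_{\al,W}=\left(\frac{\la-\al}{\la-\bar\al}\right)\pi_W+\pi_{W^\perp}$ against the inverse $p_{\al,W'}^{-1}$ used in the dressing theorem. Once the evaluation points and inverses are aligned correctly, the holomorphicity argument and Liouville's theorem close the proof with no further difficulty.
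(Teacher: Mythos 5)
Your overall plan---use Theorem \ref{thm:spdressing} and then Liouville's theorem on a normalized loop---is exactly the ``standard way'' the paper has in mind, but the loop $q$ you build encodes the wrong identity, and that identity is in fact false. Write $\hat{p}_\beta=p_{\beta,p_{\al,W}(\beta)V}$ and $\hat{p}_\al=p_{\al,p_{\beta,V}(\al)W}$. The corollary is the \emph{crossing} relation $\hat{p}_\beta\,p_{\al,W}=\hat{p}_\al\,p_{\beta,V}$, in which each transformed Lagrangian multiplies the \emph{other original} factor. Your $q=\hat{p}_\beta^{-1}p_{\al,W}p_{\beta,V}\hat{p}_\al^{-1}$ instead encodes $p_{\al,W}p_{\beta,V}=\hat{p}_\beta\,\hat{p}_\al$, which pairs transformed with transformed. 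That statement fails: the residue of $p_{\al,W}p_{\beta,V}\hat{p}_\al^{-1}$ at $\al$ is $(\al-\bar{\al})\,\pi_{W^\perp}\,p_{\beta,V}(\al)\,\pi_{p_{\beta,V}(\al)W}$, which vanishes if and only if $p_{\beta,V}(\al)^{2}W=W$; since $p_{\beta,V}(\al)^{2}=c^{2}\pi_{V}+\pi_{V^\perp}$ with $c=\frac{\al-\beta}{\al-\bar{\beta}}$ and $c^{2}\neq 1$, this fails for $W$ in general position relative to $V$ (already for $n=1$, $V=\C e_{1}$, $W=\C(e_{1}+e_{2})$). The root cause is the inverse in Theorem \ref{thm:spdressing}: holomorphy of $p_{\al,A}\,h\,p_{\al,B}^{-1}$ at $\al$ requires $B=h(\al)^{-1}A$, so with $h=p_{\beta,V}$ the transformed factor $\hat{p}_\al$ must stand to the \emph{left} of $p_{\beta,V}$ with $p_{\al,W}^{-1}$ on the right, not where you placed them. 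Your pointwise checks break down for the same reason: $p_{\al,W}p_{\beta,V}\hat{p}_\al^{-1}$ is holomorphic at $\beta$ trivially (no factor has a pole there) but is \emph{singular} there, since $p_{\beta,V}(\beta)=\pi_{V^\perp}$; and left multiplication by $\hat{p}_\beta^{-1}$, which \emph{has} a pole at $\beta$, does not ``keep $q$ holomorphic there''---the cancellation it would need amounts to $p_{\al,W}(\beta)V^{\perp}=\bigl(p_{\al,W}(\beta)V\bigr)^{\perp}$, which is false because $p_{\al,W}(\beta)$ is not unitary. The same mispairing infects your first route: for $W''=p_{\beta,V}(\al)^{-1}W$, the loop $p_{\al,W}\,p_{\beta,V}\,p_{\al,W''}^{-1}$ is indeed a simple factor at $\beta$, but its Lagrangian is the kernel of its value at $\beta$, namely $p_{\al,W''}(\beta)V$, not your candidate $p_{\al,W}(\beta)V$.

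Here is the repair, keeping your Liouville strategy. Apply Theorem \ref{thm:spdressing} with $h=p_{\beta,V}$ and initial Lagrangian $p_{\beta,V}(\al)W$: the theorem's $W'$ is then exactly $W$, so $F:=\hat{p}_\al\,p_{\beta,V}\,p_{\al,W}^{-1}$ is holomorphic at $\al$ with invertible value; since $F$ satisfies the reality condition, it is holomorphic at $\bar{\al}$ as well, so its only possible pole is a simple one at $\bar{\beta}$, and $F(\infty)=\Id$. Now set $q:=F\,\hat{p}_\beta^{-1}$ and $V'=p_{\al,W}(\beta)V$. At $\beta$ the residue of $q$ is $(\beta-\bar{\beta})F(\beta)\pi_{V'}$, which vanishes because $F(\beta)=\hat{p}_\al(\beta)\,\pi_{V^\perp}\,p_{\al,W}(\beta)^{-1}$ annihilates $V'$. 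At $\bar{\beta}$ the residue is $\bigl(\operatorname{Res}_{\bar{\beta}}F\bigr)\pi_{V'^{\perp}}$, which vanishes because $\ker\bigl(\pi_{V}\,p_{\al,W}(\bar{\beta})^{-1}\bigr)=p_{\al,W}(\bar{\beta})V^{\perp}$ equals $V'^{\perp}$ by the reality relation $p_{\al,W}(\beta)^{*}=p_{\al,W}(\bar{\beta})^{-1}$. Hence $q$ is holomorphic on all of $\cp{1}$ and equals $\Id$ at infinity, so $q\equiv\Id$ by Liouville, i.e.\ $\hat{p}_\al\,p_{\beta,V}\,p_{\al,W}^{-1}=\hat{p}_\beta$, which is the corollary. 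Note that in the correct bookkeeping every transformed Lagrangian is computed from, and then multiplied against, the \emph{untransformed} partner; also note that the checks at $\beta$ and $\bar{\beta}$ are genuine computations (a kernel identification plus the reality relation between hermitian perps), not automatic consequences of the dressing theorem.
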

This follows from Theorem \ref{thm:spdressing} in the standard way. 
 \subsection{Generators for a family of twisted loop groups}

Let $\sigma$ be the holomorphic involution of $\Sp(n,\C)$ defined by $\sigma(A)=JAJ^{-1}$, where 
 \[
 J=\left(\begin{matrix}0 & I_n \\ -I_n & 0 \end{matrix}\right),
 \] 
 giving rise to the symmetric space $\Sp(n)/\Un(n)$. Extend $\sigma$  to an involution $\sigma_c$ on $\CSp(n,\C)$ that acts on the center by $D \mapsto D^{-1}$. Also denote the derivative at the identity by $\sigma_c$. While the fixed point set of $\sigma_c$ is again $\Un(n)$, so that the respective symmetric spaces are different, the choice of a non-trivial extension of $\sigma$ aids the generator theorem. Recall that we say a loop $g$ is {\bf twisted}, or satisfies the {\bf twisting condition}, if
 \[\sigma_c(g(-\lambda))=g(\lambda).\]
 
 \begin{lem}
 If $\alpha\notin i\R$, and $p_{\alpha,W}$ is a simple element, then the product
 \[q_{\alpha,W}:=
 p_{-\alpha,p_{\alpha,W}(-\alpha)s W}p_{\alpha,W}= p_{\alpha,p_{-\alpha,s W}(\alpha)W}p_{-\alpha,sW}\]
 satisfies the twisting condition.
 \end{lem}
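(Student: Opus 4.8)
The plan is to split the statement into two independent assertions: that the two displayed products are equal, and that their common value $q_{\alpha,W}$ is fixed by the twisting operation. Here $s$ denotes the involution on Lagrangian subspaces induced by $\sigma_c$, which on the standard representation is $sV=JV$.

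For the equality of the two expressions I would invoke the permutability corollary following Theorem~\ref{thm:spdressing}. Setting $\beta=-\alpha$ and $V=sW$ in that corollary turns its two sides into exactly $p_{-\alpha,\,p_{\alpha,W}(-\alpha)sW}\,p_{\alpha,W}$ and $p_{\alpha,\,p_{-\alpha,sW}(\alpha)W}\,p_{-\alpha,sW}$. The corollary applies as soon as $\alpha\neq-\alpha$ and $\alpha\neq\overline{-\alpha}=-\bar\alpha$; the first is automatic since $\alpha\notin\R$, and the second is precisely the hypothesis $\alpha\notin i\R$. Hence the two expressions for $q_{\alpha,W}$ coincide, and I may freely use whichever is convenient.

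For the twisting, introduce the map $T(g)(\lambda):=\sigma_c(g(-\lambda))$ on the loop group; twisting of $g$ is exactly $T(g)=g$, and $T$ is a homomorphism since $\sigma_c$ is a group automorphism and $\lambda\mapsto-\lambda$ is a reparametrization. The crux is a transformation rule for simple factors,
\[ T(p_{\beta,V})=p_{-\beta,\,sV},\qquad sV:=JV. \]
To obtain it I would pass to the standard representation, where $p_{\beta,V}(-\lambda)=\tfrac{\lambda+\beta}{\lambda+\bar\beta}\pi_V+\pi_{V^\perp}$, apply $\sigma_c$, and use that $J$ is unitary for the standard Hermitian form, so that it carries Hermitian-orthogonal complements to Hermitian-orthogonal complements, i.e. $JV^\perp=(JV)^\perp$. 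This gives $T(p_{\beta,V})(\lambda)=\tfrac{\lambda+\beta}{\lambda+\bar\beta}\pi_{sV}+\pi_{(sV)^\perp}=p_{-\beta,sV}(\lambda)$. I expect this rule to be the main obstacle: one must verify that the pole is carried to $-\beta$ (rather than $-\bar\beta$) and that no spurious scalar survives, and this is exactly where the normalization of the extension $\sigma_c$ and its interaction with the conformal factor of the simple factor have to be tracked carefully.

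Granting the rule, the conclusion is formal. Applying $T$ to the first expression for $q_{\alpha,W}$ with $V_1:=p_{\alpha,W}(-\alpha)\,sW$ gives
\[ T(q_{\alpha,W})=T\!\big(p_{-\alpha,V_1}\big)\,T\!\big(p_{\alpha,W}\big)=p_{\alpha,\,sV_1}\,p_{-\alpha,\,sW}. \]
It remains to identify $sV_1$ with the subspace $V_2:=p_{-\alpha,sW}(\alpha)W$ of the second expression. Evaluating the transformation rule at $\lambda=\alpha$ yields $\sigma_c\big(p_{\alpha,W}(-\alpha)\big)=p_{-\alpha,sW}(\alpha)$, that is $J\,p_{\alpha,W}(-\alpha)\,J^{-1}=p_{-\alpha,sW}(\alpha)$, whence
\[ sV_1=J\,p_{\alpha,W}(-\alpha)\,sW=p_{-\alpha,sW}(\alpha)\,J\cdot JW=p_{-\alpha,sW}(\alpha)\,W=V_2, \]
using $J^2=-\Id$ and $sW=JW$. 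Therefore $T(q_{\alpha,W})=p_{\alpha,V_2}\,p_{-\alpha,sW}$, which is exactly the second expression for $q_{\alpha,W}$ from the first step. Thus $T(q_{\alpha,W})=q_{\alpha,W}$, i.e. $q_{\alpha,W}$ satisfies the twisting condition.
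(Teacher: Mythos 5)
Your first step is fine: applying the permutability corollary with $\beta=-\al$, $V=sW$ is legitimate under the hypothesis $\al\notin i\R$ (which gives $\al\neq-\al,-\bar\al$), and this is all the paper itself says (``follows immediately from the permutability proposition''). The fatal gap is your transformation rule $T(p_{\beta,V})=p_{-\beta,sV}$ for $T(g)(\la)=\sigma_c(g(-\la))$. You correctly flag the interaction of $\sigma_c$ with the conformal factor as the delicate point, but then compute as if $\sigma_c$ were plain conjugation by $J$. It is not: the paper deliberately chooses the \emph{nontrivial} extension acting on the centre by $D\mapsto D^{-1}$, so that $\sigma_c(A)=c_A^{-1}\,JAJ^{-1}$, where $c_A$ is the conformal factor. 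Since $p_{\beta,V}(\la)$ has conformal factor $\frac{\la-\beta}{\la-\bar\beta}$ (both $V$ and $V^\perp=J\bar V$ are Lagrangian), and $J(V^\perp)=J^2\bar V=\bar V$ as subspaces, one gets
\[
\sigma_c\bigl(p_{\beta,V}(\la)\bigr)=\Bigl(\tfrac{\la-\beta}{\la-\bar\beta}\Bigr)^{-1}\Bigl[\tfrac{\la-\beta}{\la-\bar\beta}\,\pi_{JV}+\pi_{\bar V}\Bigr]=\pi_{JV}+\tfrac{\la-\bar\beta}{\la-\beta}\,\pi_{\bar V}=p_{\bar\beta,\bar V}(\la),
\]
hence $T(p_{\beta,V})=\sigma_c(p_{-\beta,V})=p_{-\bar\beta,\bar V}$, not $p_{-\beta,JV}$. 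The two differ by the central loop $\frac{\la+\beta}{\la+\bar\beta}\Id$ --- exactly the ``spurious scalar'' you asserted does not survive. (In the $\SO(n,\C)$ analogue, Lemma \ref{lem:perm}, your rule is correct, because there $s$ is a real matrix and there is no conformal factor; it is precisely the central inversion in $\sigma_c$ that breaks it here.) A sanity check confirms the corrected rule: the generators $p_{\al,W}$ with $\al\in i\R$ and $W$ a \emph{real} Lagrangian are twisted because $T(p_{ir,W})=p_{ir,\bar W}=p_{ir,W}$; your rule would instead demand $JW=W$, contradicting the theorem's generator list.

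The consequence is that your formal scheme, run with the correct rule, proves a different identity: the twisted product must pair $\al$ with $-\bar\al$ and use complex conjugation, namely $q=p_{-\bar\al,\,p_{\al,W}(-\bar\al)\bar W}\,p_{\al,W}=p_{\al,\,p_{-\bar\al,\bar W}(\al)W}\,p_{-\bar\al,\bar W}$ satisfies $T(q)=q$, by the same permutability-plus-evaluation bookkeeping, now using $\overline{p_{\al,W}(\bar\la)}=p_{\bar\al,\bar W}(\la)$ and $p_{\gamma,V}(-\la)=p_{-\gamma,V}(\la)$. Read literally with $sW=JW$ and spectral parameter $-\al$, the displayed product is in fact \emph{not} twisted: for $n=1$ and $W=\C e_1$ one finds $q_{\al,W}(\la)=\diag\bigl(\frac{\la-\al}{\la-\bar\al},\frac{\la+\al}{\la+\bar\al}\bigr)$, while $\sigma_c(q_{\al,W}(-\la))=\diag\bigl(\frac{\la+\bar\al}{\la+\al},\frac{\la-\bar\al}{\la-\al}\bigr)$, and these agree only when $\al^2\in\R$, which is excluded by the hypotheses. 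So your proof cannot be repaired as written; you must replace the rule by $T(p_{\beta,V})=p_{-\bar\beta,\bar V}$ and accordingly read the paper's $s$ and $-\al$ as incorporating conjugation (i.e.\ the pair $(-\bar\al,\bar W)$), after which your dressing and evaluation steps go through essentially verbatim.
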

 
 The proof follows immediately from the permutability proposition.
 
 \begin{thm}
 The elements
 \begin{gather*}
 p_{\alpha,W}(\lambda)=\left(\frac{\la-\al}{\la-\ol{\al}}\right)\pi_W + \pi_{W^\perp},\ \text{where }W\text{is a real Lagrangian, and }\alpha\in i\R,\\
 q_{\alpha,W}(\lambda),\ \text{where }W \text{is any Lagrangian,} \ \text{and }\alpha\in\C\setminus(\R\cup i\R)
 \end{gather*}
 generate the twisted loop group $\rlgmtw(\CSp(n,\C),\C^{2n})$.
 \end{thm}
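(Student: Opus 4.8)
The plan is to reduce the twisted generator theorem to the untwisted one (Theorem \ref{thm:gencsp}) by exploiting how the twisting involution $\sigma_c$ pairs up poles. Given a twisted loop $g\in\rlgmtw(\CSp(n,\C),\C^{2n})$, the condition $\sigma_c(g(-\la))=g(\la)$ together with the reality condition $\tau(g(\bar\la))=g(\la)$ forces the poles of $g$ to come in symmetric configurations: a pole at $\al$ is accompanied by poles at $\bar\al$, $-\al$, and $-\bar\al$. I would organize the induction by the total degree (in the sense of Theorem \ref{thm:gencsp}) of these pole orbits under the group generated by $\la\mapsto\bar\la$ and $\la\mapsto-\la$ acting on $\C\setminus\R$. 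Crucially, the two families of generators are adapted to the two types of orbit: for $\al\in i\R$ the maps $\la\mapsto\bar\la$ and $\la\mapsto-\la$ coincide (both send $\al$ to $-\al=\bar\al$), so a single simple element $p_{\al,W}$ with $W$ real Lagrangian already respects the twisting; for $\al\in\C\setminus(\R\cup i\R)$ the four points $\al,\bar\al,-\al,-\bar\al$ are genuinely distinct, and the composite $q_{\al,W}$ is precisely the twisting-symmetric product that removes a pole at $\al$ and its mirror $-\al$ simultaneously.

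The key steps, in order, are as follows. First I would verify that twisting plus reality constrains the Laurent data at $\al$ and at its orbit partners so that a reduction step performed at $\al$ by a simple factor can be carried out compatibly at the partner poles by the $\sigma_c$- and $\tau$-images of the same factor; this is what makes the products $p_{\al,W}$ (for $\al\in i\R$) and $q_{\al,W}$ (for generic $\al$) the natural tools, since each already satisfies the twisting condition by Lemmas \ref{lem:twisting} and the lemma immediately preceding this theorem. Second, for a generic pole at $\al\notin i\R$, I would run the pole-reduction argument of Theorem \ref{thm:gencsp} at $\al$ using one of the two simple factors making up $q_{\al,W}$, choosing the Lagrangian $W$ from the leading coefficient $g_{-k}$ exactly as before (either $g_{-k}$ is invertible and one multiplies by $p_{\al,W}$ for arbitrary $W$, or $\Im g_{-k}$ is $\omega$-isotropic and one picks a Lagrangian $W\supset\Im g_{-k}$); the twisting condition then guarantees that the companion factor automatically cleans up the pole at $-\al$, so that multiplying by the full $q_{\al,W}$ keeps the loop twisted while strictly lowering the total degree of the whole $\{\al,\bar\al,-\al,-\bar\al\}$ orbit. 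Third, for a pole at $\al\in i\R$, the orbit collapses to the pair $\{\al,\bar\al\}$ and the real-Lagrangian simple element $p_{\al,W}$ does the reduction in one stroke, again following the template of Theorem \ref{thm:gencsp}. After all poles are removed one repeats the holomorphy-at-$\al$ step of the untwisted proof, now using twisted generators, reducing the order of vanishing of $\det g$; finally Liouville's theorem forces the remaining pole-free, $\Id$-at-$\infty$ loop to be trivial.

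The step I expect to be the main obstacle is the compatibility at the generic four-point orbit: I must check that after using the $p_{\al,\cdot}$ half of $q_{\al,W}$ to kill the pole at $\al$, the $p_{-\al,\cdot}$ half with the specific Lagrangian $p_{\al,W}(-\al)sW$ dictated by the definition of $q_{\al,W}$ indeed removes the pole at $-\al$ rather than creating a new one, and that no uncontrolled interaction reintroduces a pole at $\bar\al$ or $-\bar\al$. The twisting and reality relations should make the leading coefficient at $-\al$ the $\sigma_c$-image of the one at $\al$, so that isotropy and invertibility are transported correctly and the Lagrangian $sW$ appearing in $q_{\al,W}$ is forced to be the right choice; making this bookkeeping precise --- and confirming that lowering the degree at $\al$ genuinely lowers the degree of the entire orbit in the chosen ordering, so the induction is well-founded --- is where the real work lies. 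The $\det$-vanishing-order step in the holomorphic phase will need the analogous twisted refinement, but once the orbit compatibility is established it should follow by the same accounting used in Theorem \ref{thm:gencsp}.
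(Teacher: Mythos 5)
There is a genuine gap, and it sits exactly in the case you dispose of in one sentence: the purely imaginary poles. Your step for $\al\in i\R$ claims that "the real-Lagrangian simple element $p_{\al,W}$ does the reduction in one stroke, following the template of Theorem \ref{thm:gencsp}," but that template only produces an \emph{arbitrary complex} Lagrangian $W$ containing $\Im g_{-k}$ (when $\Im g_{-k}$ is $\omega$-isotropic), or an arbitrary Lagrangian (when $g_{-k}$ is invertible). The twisted generators at imaginary $\al$ require $W$ to be a \emph{real} Lagrangian, and a complex $\omega$-isotropic subspace $V\subset\C^{2n}$ need not be contained in any real Lagrangian: that requires the extra condition $\omega(\ol{V},V)=0$, equivalently $V\perp JV$ in the hermitian inner product. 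Without it, multiplying by $p_{\al,W}$ for the $W$ supplied by the untwisted theorem destroys the twisting condition, and at an imaginary pole there is no companion factor (the orbit is just $\{\al,\ol{\al}\}$) to restore it. The missing argument --- which is the actual heart of the paper's proof --- is to combine the reality condition $g(\la)=\hd g(\ol{\la})^{-1}$ with the twisting condition $g(\la)=-Jg(-\la)J$ to obtain
\[
(g(-\ol{\la})v,Jg(\la)Jw)=-(v,w),
\]
whose lowest-order Laurent coefficient at $\al=ir$ gives $(g_{-k}v,Jg_{-k}Jw)=0$ for all $v,w$, i.e.\ $V\perp JV$ for $V=\Im g_{-k}$. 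This simultaneously rules out $g_{-k}$ being invertible (so $V$ is $\omega$-isotropic by the $\CSp$ dichotomy) and yields $\omega(\ol{V},V)=(V,JV)=0$, so that $V+\ol{V}$ is a real isotropic subspace, hence contained in a real Lagrangian $W$; only then can the induction proceed with twisted generators.

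Conversely, the step you flag as the main obstacle --- compatibility across the four-point orbit $\{\al,\ol{\al},-\al,-\ol{\al}\}$ for generic $\al$ --- is nearly automatic and the paper dispatches it in two sentences: by Theorem \ref{thm:gencsp} choose $W$ so that $p_{\al,W}g$ has lower degree at $\al$; the companion factor of $q_{\al,W}$ is holomorphic and invertible at $\al$, so $q_{\al,W}g$ also has lower degree there, and since $q_{\al,W}g$ is again twisted and real, its pole degrees at the three partner points are forced to equal the (reduced) degree at $\al$. So your orbit bookkeeping is sound but is not where the work lies; the work lies in the imaginary case, where your proposal as written would fail.
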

\begin{proof}
Let $g\in \rlgmtw(\CSp(n,\C),\C^{2n})$. We deal with the purely imaginary and non-imaginary poles of $g$ separately, as in the generating theorem for $\Un(n)/\Or(n)$ \cite{Terng2006}. First assume that $\alpha$ is a non-imaginary pole.  By Theorem \ref{thm:gencsp}, we can find a Lagrangian $W$ such that $p_{\alpha, W}g$ has a pole of lower degree than $g$. Then, $q_{\alpha,W}g$ does so, too, and also satisfies the twisting condition. In this way, we remove the pole at $\alpha$, and the possibly occurring zero can then be dealt with in the same manner.  In this way we can remove all of the nonimaginary poles.

Assume that $\alpha=ir$ is a purely imaginary pole of $g$, and write its power series expansion around $\al$ as 
\[ g(\la)=\sum_{j=-k}^\infty \left(\frac{\la-ir}{\la+ir}\right)^j g_j
\]
 with $g_{-k}\neq 0$. We want to show that $V=\Im g_{-k}$ is contained in a real Lagrangian subspace. As before, because $g$ is a loop in $\CSp(n,\C)$, the image of $g_{-k}$ is either equal to $\C^{2n}$ or $\omega$-isotropic. We get additional information by combining the reality and twisting conditions.
They read
\[
g(\la)=\hd g(\bar{\la})^{-1} \text{ and } g(\la)=-Jg(-\la)J,
\]
and imply that
\[
(g(-\bar{\la})v,Jg(\la)Jw)=-(v,w).
\]
Expanding this as a power series, the lowest order term gives us
\[
(g_{-k}v,Jg_{-k}Jw)=0
\]
for all $v$ and $w$, i.e.~$V\perp J V$. In particular, $V$ cannot be equal to $\C^{2n}$, so it is also $\omega$-isotropic. Therefore, $\omega(\bar{V},V)=(V,JV)=0$, i.e.~$V$ is contained in the real isotropic space $V+\bar{V}$, which in turn is contained in some real Lagrangian $W$. We can therefore, proceed as in Theorem \ref{thm:gencsp} using simple factors corresponding to such $W$: first we remove the pole, and then any possible resulting zero.
\end{proof} 

\section{$\SO(n,\C)$}

\subsection{Generating $\rlgm(\SO(n,\C),\C^n)$} \label{sec:genso}

In this section, we prove that for the standard representation of $\SO(n,\C)$, the simple elements defined in Section \ref{sec:simpleelements} generate the rational loop group $\rlgm(\SO(n,\C),\C^n)$. The compact real form $\SO(n)$ is given by the involution $\tau(A)=\ol{A}$.
Let $\ft\subset \so(n)$ be a maximal abelian subalgebra with complexification $\h\subset \so(n,\C)$, and let $\pm \mu_1,\ldots,\pm \mu_r$ be the nonzero weights of the standard representation, where $r$ is the rank of $\SO(n)$. Note that $\mu_1,\ldots,\mu_r$ is a basis of $\h^*$ and that zero is a weight if and only if $n$ is odd.  Let $L_i$ be the weight space corresponding to $\mu_i$; then, $\overline{L_i}$ is the weight space of $-\mu_i$. Both $L_i$ and $\overline{L_i}$ are isotropic lines in $\C^n$. The weight space decomposition is
\[
\C^n=V_0\oplus \bigoplus_{i=1}^r (L_i\oplus\overline{L_i});
\]
here $V_0$ is the weight space corresponding to the weight $0$, if $n$ is odd, and empty otherwise.
Let $H_i$ be such that $\mu_j(H_i)=\delta_{ij}$. The $H_i$ form a basis of the $\rho$-integral lattice, and by Proposition \ref{prop:simple}, we have
\[
\exp\left(\ln\left(\frac{\lambda-\alpha}{\lambda-\bar\alpha}\right)H_i\right)=\left(\frac{\lambda-\bar\alpha}{\lambda-\alpha}\right)\pi_{\overline{L_i}}+\pi_{(L_i\oplus \overline{L_i})^\perp}+\left(\frac{\lambda-\alpha}{\lambda-\bar\alpha}\right)\pi_{L_i}.
\]
Conjugating $H_i$ with elements of $\SO(n)$, we find that the loops 
\be\label{eq:simpleso}
p_{\al,L}=\left(\frac{\lambda-\bar\alpha}{\lambda-\alpha}\right)\pi_{\ol{L}} +\pi_{(L\oplus\bar L)^\perp} +\left(\frac{\lambda-\alpha}{\lambda-\bar\alpha}\right)\pi_{L},
\ee
where $L$ is an arbitrary isotropic line in $\C^n$, are simple elements in $\rlgm(\SO(n,\C),\C)$. 

\begin{thm}\label{thm:genso}
The simple elements $p_{\alpha,L}$, where $\alpha\in\C\setminus \R$ and $L\subset\C^n$ is an isotropic line, generate $\rlgm(\SO(n,\C),\C^n)$.
\end{thm}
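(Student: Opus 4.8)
The plan is to adapt the strategy of Theorem~\ref{thm:gencsp}. Fix a pole $\al\in\C\setminus\R$ of $g\in\rlgm(\SO(n,\C),\C^n)$, write the Laurent expansion $g=\sum_{j\ge -k}t^jg_j$ in $t:=\frac{\la-\al}{\la-\bar\al}$ with $g_{-k}\ne0$, and induct on the total degree $(k,\rank g_{-k})$ of the pole, ordered as in that proof. The one structural novelty is that the orthogonal simple element $p_{\al,L}$ of~\eqref{eq:simpleso} has poles at \emph{both} $\al$ and $\bar\al$ (from the $t^{-1}\pi_{\bar L}$ and $t\pi_L$ terms), in contrast to the one-sided symplectic factor. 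However, since $p_{\al,L}$ satisfies the reality condition (Lemma~\ref{lem:reality}), so does $p_{\al,L}g$, and reality forces the pole orders at $\al$ and $\bar\al$ to coincide. Hence it suffices to reduce the pole at $\al$: the conjugate pole at $\bar\al$ is then reduced automatically, while poles at other points are untouched by $p_{\al,L}$ and are treated in conjugate pairs.

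Let $\ip{v}{w}=\trp v w$ be the complex bilinear form preserved by $\SO(n,\C)$. Inserting the expansion into $\ip{g(\la)v}{g(\la)w}=\ip{v}{w}$, the two lowest-order coefficients give
\[
\ip{g_{-k}v}{g_{-k}w}=0,\qquad \ip{g_{-k}v}{g_{-k+1}w}+\ip{g_{-k+1}v}{g_{-k}w}=0
\]
for all $v,w$. The first says $V:=\Im g_{-k}$ is isotropic; the second is the crucial relation controlling the next coefficient. Because $\SO(n)$ is compact one also has $V\cap\bar V=0$: if $0\ne w\in V\cap\bar V$ then $w,\bar w\in V$ would force $\ip{w}{\bar w}=0$, contradicting $\ip{w}{\bar w}=\sum|w_i|^2>0$. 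Thus $V\oplus\bar V$ is nondegenerate, every line $L\subset V$ is isotropic with $L\oplus\bar L$ a hyperbolic plane, and $p_{\al,L}$ is a genuine simple element.

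The reduction step is where the orthogonal linear algebra differs from the symplectic case, and it is the main obstacle. Choose an isotropic line $L\subset V$ and form $\tilde g=p_{\al,L}g$. Since $L\subset V$ is isotropic, $\ip{\ell}{v}=0$ for all $v\in V$ (where $\ell$ spans $L$), so $V$ has no $\bar L$-component and $\pi_{\bar L}g_{-k}=0$; thus the $t^{-k-1}$ term of $\tilde g$ vanishes and the pole is not worsened, the new leading coefficient being
\[
\tilde g_{-k}=\pi_{(L\oplus\bar L)^\perp}g_{-k}+\pi_{\bar L}g_{-k+1}.
\]
Unlike in Theorem~\ref{thm:gencsp} this does not vanish outright, since the $t^{-1}\pi_{\bar L}$ part reintroduces a pole in the $\bar L$ direction; the point is to show it nonetheless drops the total degree. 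The two summands have images in the complementary subspaces $(L\oplus\bar L)^\perp$ and $\bar L$, so $\ker\tilde g_{-k}=\ker(\pi_{(L\oplus\bar L)^\perp}g_{-k})\cap\ker(\pi_{\bar L}g_{-k+1})$, and I claim the first kernel lies in the second. Indeed, if $g_{-k}x\in L$ write $x=cz+x_0$ with $g_{-k}z=\ell$ and $x_0\in\ker g_{-k}$; then $\ip{\ell}{g_{-k+1}x}=c\ip{g_{-k}z}{g_{-k+1}z}+\ip{g_{-k}z}{g_{-k+1}x_0}=0$, the first term vanishing by symmetry of $\ip{}{}$ together with the second relation above, the second by that relation and $g_{-k}x_0=0$. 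Hence $\pi_{\bar L}g_{-k+1}x=0$, so $\rank\tilde g_{-k}=\rank(\pi_{(L\oplus\bar L)^\perp}g_{-k})=\rank g_{-k}-1$ and the degree strictly decreases. When $\rank g_{-k}=1$ we have $V=L$, so $\pi_{(L\oplus\bar L)^\perp}g_{-k}=0$ and the kernel inclusion forces $\tilde g_{-k}=0$, i.e.\ the pole \emph{order} drops.

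Iterating clears the $(\al,\bar\al)$-pole pair. Here the orthogonal case is in fact simpler than the symplectic one: once $g$ is holomorphic at $\al$, the $t^0$-coefficient of $\ip{g(\la)v}{g(\la)w}=\ip{v}{w}$ reads $\ip{g(\al)v}{g(\al)w}=\ip{v}{w}$, so $g(\al)\in\SO(n,\C)$ is automatically invertible and no separate removal of zeros is required (the determinant stays identically $1$). After clearing all conjugate pole pairs we obtain a loop holomorphic on $\cp{1}$ with value $\Id$ at $\infty$; by Liouville it is the constant loop $\Id$, which exhibits $g$ as a product of the simple elements $p_{\al,L}$.
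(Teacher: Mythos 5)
Your proof is correct and takes essentially the same route as the paper's: the same Laurent expansion and induction on total degree, the same two lowest-order orthogonality relations, the same choice of an isotropic line $L\subset\Im g_{-k}$, and the same kernel comparison showing that multiplication by $p_{\al,L}$ strictly drops the rank of the leading coefficient. The only differences are expository --- you compute $\ker\tilde g_{-k}$ exactly (so the rank drops by precisely one) and you spell out points the paper leaves implicit, namely the simultaneous treatment of the conjugate pole at $\bar\al$, the automatic invertibility of $g(\al)$ once the pole is removed, and the concluding Liouville argument.
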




\begin{proof}
Let $g\in \rlgm(\SO(n,\C),\C^{n})$ and fix a pole $\al\in \C\setminus \R$. As before, we use induction on its degree to remove it.  The reality condition implies that $\alpha$ is a pole if and only if $\bar{\al}$ is a pole.  Thus, in contrast to the case of $\CSp(n,\C)$, removing a pole at $\al$ simultaneously removes the pole at $\bar{\al}$.  

We write the Laurant expansion of $g$ in  $\frac{\la-\al}{\la-\bar{\alpha}}$ explicitly as
\be
g(\la)=\sum_{j=-k}^\infty \left(\frac{\la-\al}{\la-\ol{\alpha}}\right)^j g_j
\ee
 with $g_{-k}\neq 0$. 
Since $g$ takes values in $\SO(n,\C)$, we have $\langle g(\la) v,g(\la)w\rangle=\langle v,w\rangle $ for all $v,w\in \C^n$ and all $\lambda$ near 0, where $\langle \cdot,\cdot \rangle$ is the symmetric bilinear form on $\C^n$ extending the standard inner product on $\R^n$ complex linearly. Expanding this expression in $\lambda$, the terms of lowest order read
\begin{align}
\langle g_{-k}v,g_{-k}w\rangle&=0\label{eq:gorthcond1}\\
\langle g_{-k+1}v,g_{-k}w\rangle +\langle g_{-k}v,g_{-k+1}w\rangle&=0.\label{eq:gorthcond2}
\end{align}
The first of these equations says that $\Im g_{-k}$ is isotropic. Since no real subspaces are isotropic, we obtain a decomposition
\[\C^n=\Im g_{-k}\oplus (\Im g_{-k}\oplus\ol{\Im g_{-k}})^\perp\oplus\ol{\Im g_{-k}}.\]
Now let $L\subset\Im g_{-k}$ be a line and calculate
\begin{align*}
\sum_{j=-k-1}^\infty &\left(\frac{\la-\al}{\la-\ol{\alpha}}\right)^j \tilde g_j:=\tilde{g}(\lambda):=p_{\al,L}(\lambda)g(\lambda)\\
&=\left(\frac{\la-\al}{\la-\ol{\alpha}}\right)^{-k-1}\pi_{\bar L}g_{-k} 
 +\left(\frac{\la-\al}{\la-\ol{\alpha}}\right)^{-k}(\pi_{\bar L}g_{-k+1}+\pi_{(L\oplus\bar L)^\perp}g_{-k}) +\ldots
\end{align*}
Since $\Im g_{-k}$ is isotropic we have
\[\ker\pi_{\bar L}=L^\perp\supset\Im g_{-k}^\perp\supset\Im g_{-k},\]
 hence $\tilde{g}_{-k-1}$ vanishes.
 
It remains to show that the rank $\tilde g_{-k}$ is smaller than the rank of $g_{-k}$. For this we compute kernels. Let $v\in\ker g_{-k}$. Then, by \eqref{eq:gorthcond2}, we have $\langle g_{-k+1}v,g_{-k}w\rangle=0$  for all $w\in\C^n$. Thus $g_{-k+1}(\ker g_{-k})\perp\Im g_{-k}\supset L$. In particular we have $\ker g_{-k}\subset \ker\tilde g_{-k}$.
To see that this is a proper inclusion we observe that there exists some $v\in\C^n$ such that $g_{-k}v\in L\setminus\{0\}$. Applying \eqref{eq:gorthcond2} with $w=v$ yields
\[\langle g_{-k+1}v,g_{-k}v\rangle=0.\]
Thus $g_{-k+1}v\perp L$ and so $v\in\ker\tilde g_{-k}$. We have thus found a vector 
\[
v \in \ker\tilde g_{-k}\setminus\ker g_{-k}.
\]
Conseqently $\rank \tilde g_{-k}<\rank g_{-k}$ and $\tilde g$ has a pole of lower degree at $\al$.
\end{proof}

\subsection{Dressing and permutability} 
The following theorem is well-known, see e.g.\ \cite{Burstall2004}. 
\begin{thm} \label{thm:dressingso} Let $h\in \mcl(\SO(n,\C))$ be holomorphic at $\al\in \C\setminus \R$. If we define $L'=h(\alpha)^{-1}L$,
then
\be \label{eq:dressingso}
p_{\al,L}hp_{\al,L'}^{-1}
\ee
is holomorphic at $\al$.
\end{thm}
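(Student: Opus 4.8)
The plan is to reduce to a basis computation via Lemma~\ref{lem:ratlem} and to organize everything around the weight decomposition attached to the outgoing line $L'$. Set $\zeta=\frac{\la-\al}{\la-\bar\al}$, a local coordinate vanishing to first order at $\al$, so that
\[
p_{\al,L}=\zeta^{-1}\pi_{\bar L}+\pi_0+\zeta\pi_L,\qquad p_{\al,L'}^{-1}=\zeta\,\pi_{\bar{L'}}+\pi_0'+\zeta^{-1}\pi_{L'},
\]
where $\pi_0:=\pi_{(L\oplus\bar L)^\perp}$ and $\pi_0':=\pi_{(L'\oplus\bar{L'})^\perp}$. Because $\SO(n)$ preserves the hermitian form, the splitting $\C^n=L\oplus(L\oplus\bar L)^\perp\oplus\bar L$ is hermitian-orthogonal; in particular $\pi_{\bar L}L=\pi_0 L=0$, and similarly for $L'$. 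I will test holomorphy of $P:=p_{\al,L}h\,p_{\al,L'}^{-1}$ on $v\in\bar{L'}$, on $v\in(L'\oplus\bar{L'})^\perp$, and on $v\in L'$, expanding $h=\sum_{j\ge0}\zeta^j h_j$ with $h_0=h(\al)$.

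The first two cases cost almost nothing. For $v\in\bar{L'}$ we have $p_{\al,L'}^{-1}v=\zeta v$, so $h\,p_{\al,L'}^{-1}v=O(\zeta)$ and the simple pole of $p_{\al,L}$ is absorbed. For $v\in(L'\oplus\bar{L'})^\perp$ we have $p_{\al,L'}^{-1}v=v$, and the only possible pole of $P(\la)v$ is $\zeta^{-1}\pi_{\bar L}h_0 v$; I claim $\pi_{\bar L}h_0 v=0$. Writing the hermitian form as $(u,w)=\hd u\,w=\langle\bar u,w\rangle$ and using $L=h_0 L'$ together with $h_0\in\SO(n,\C)$, the vanishing of the $\bar L$-component of $h_0 v$ reduces to $\langle v,w'\rangle=0$ for $w'\in L'$; this holds because $v\perp_h(L'\oplus\bar{L'})$ forces $v\perp(L'\oplus\bar{L'})$ for the bilinear form after conjugating.

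The case $v\in L'$ is the crux, since it is the only one producing an a priori double pole: here $p_{\al,L'}^{-1}v=\zeta^{-1}v$, so $h\,p_{\al,L'}^{-1}v=\zeta^{-1}h_0 v+h_1 v+O(\zeta)$. The $\zeta^{-2}$-coefficient of $P(\la)v$ is $\pi_{\bar L}h_0 v$, which vanishes because $h_0 v\in h_0 L'=L$ and $L\perp_h\bar L$ (the same computation also kills $\pi_0 h_0 v$). The surviving $\zeta^{-1}$-coefficient is $\pi_{\bar L}h_1 v$, and proving that this vanishes is where the orthogonality of $h$ and the one-dimensionality of $L$ both enter. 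Comparing $\zeta^1$-coefficients in the identity $h(\la)^t h(\la)=I$ gives $h_0^t h_1+h_1^t h_0=0$, so $S:=h_0^t h_1$ is skew-symmetric and $h_1=h_0 S$. Exactly as in the previous case, $\pi_{\bar L}h_0 S v=0$ is equivalent to $\langle Sv,w'\rangle=0$ for $w'\in L'$; since $L'$ is a line we may take $w'=v$, and $\langle Sv,v\rangle=v^t S^t v=-v^t Sv=0$, the last equality because $v^t Sv=0$ for skew $S$.

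Having removed all negative powers of $\zeta$ in each case, Lemma~\ref{lem:ratlem} gives that $P$ is holomorphic at $\al$. I expect the genuinely subtle point to be the $\zeta^{-1}$-coefficient in the $v\in L'$ case: it is precisely the one-dimensionality of the isotropic line $L'$ that forces the skew form $\langle S\,\cdot\,,\cdot\rangle$ to vanish there, and this is the structural reason that simple factors for $\SO(n,\C)$ are built from isotropic \emph{lines} rather than higher-dimensional isotropic subspaces.
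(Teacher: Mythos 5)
Your pole-killing argument is correct and is essentially the paper's proof in different clothing: the paper computes the operator coefficients $\pi_{\bar L}h_0\pi_{L'}$ and $\pi_{\bar L}h_0\pi_{(L'\oplus\bar{L'})^\perp}+\pi_{(L\oplus\bar L)^\perp}h_0\pi_{L'}+\pi_{\bar L}h_1\pi_{L'}$ and kills them with exactly your three facts --- $h_0L'=L$, invariance of the bilinear form under $h_0$, and skewness of $h_1h_0^{-1}\in\so(n,\C)$ (your $S=h_0^{-1}h_1$ is its conjugate, and your use of the one-dimensionality of $L'$ is the same as the paper's $\langle (h_1h_0^{-1})L,L\rangle=0$). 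Testing on a basis adapted to $L'\oplus(L'\oplus\bar{L'})^\perp\oplus\bar{L'}$ rather than computing projection coefficients is a purely organizational difference.

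What is missing is the endgame. You conclude from the vanishing of all negative Laurent coefficients that $P=p_{\al,L}hp_{\al,L'}^{-1}$ is holomorphic at $\al$, but the theorem concerns a loop in $\mcl(\SO(n,\C))$, so holomorphy at $\al$ means the map extends to $\al$ \emph{with value in the group}; matrix-level regularity alone does not give this, since a pole-free limit of group elements can be singular at the limit point (e.g.\ $\pi_{W^\perp}+\frac{\la-\al}{\la-\bar\al}\pi_W$ in $\GL(n,\C)$ is pole-free at $\al$ but its value there is not invertible), and Lemma~\ref{lem:ratlem}, which concerns meromorphy of matrix entries, cannot detect this. The paper closes this in one line: the determinant of the product is identically $1$ near $\al$ (each simple factor has determinant $1$, and $\det h=1$), so the value at $\al$ is invertible, and since $\SO(n,\C)$ is closed in $\GL(n,\C)$ that value lies in $\SO(n,\C)$. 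Equivalently, the polynomial relations cutting out $\SO(n,\C)$ pass to the limit $\la\to\al$. The fix is trivial, but it is genuinely part of the statement, and it matters downstream, where the value of the dressed loop at $\al$ is inverted and applied to lines (e.g.\ in the $\GGC$ dressing and permutability formulae).
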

\begin{proof} Let $h(\la)=\sum_{i} \left(\frac{\la-\al}{\la-\ol{\al}}\right)^i h_i$.
Multiplying out \eqref{eq:dressingso}, the $\left(\frac{\la-\al}{\la-\ol{\al}}\right)^{-2}$- and $\left(\frac{\la-\al}{\la-\ol{\al}}\right)^{-1}$-coefficients are
\begin{align}
&\pi_{\ol{L}}h_0\pi_{L'} \label{eq:dressingcoeff1}\\
&\pi_{\ol{L}}h_0\pi_{(L'\oplus\ol{L'})^\perp}+ \pi_{(L\oplus\ol{L})^\perp}h_0\pi_{L'} + \pi_{\ol{L}}h_1\pi_{L'}.\label{eq:dressingcoeff2}
\end{align}
Since the image of $h_0\pi_L'$ is $L$, the term \eqref{eq:dressingcoeff1} and the second summand of \eqref{eq:dressingcoeff2} vanish. The first summand of \eqref{eq:dressingcoeff2} is zero because of
\[
\left<h_0(L'\oplus\ol{L'})^\perp,L\right>=\left<h_0(L'\oplus\ol{L'})^\perp,h_0L'\right>=\left<(L'\oplus\ol{L'})^\perp,L'\right>=0.
\]
For the vanishing of the third note that $h_1h_0^{-1}\in \so(n,\C)$. Then,
\[
\left<h_1L',L\right>=\left<(h_1h_0^{-1})L,L\right>=0.
\]
It now is possible to evaluate the loop \eqref{eq:dressingso} at $\al$. The determinant of the loop is $1$ at $\al$, so since $\SO(n,\C)$ is closed in $\GL(n,\C)$, the loop is holomorphic at $\al$.
\end{proof}

\begin{cor}
Suppose that $p_{\alpha,L}$ and $p_{\beta,M}$ are simple factors such that $\al\neq \beta,\bar{\beta}$. Then
\[p_{\beta,p_{\alpha,L}(\beta)M}p_{\alpha,L}= p_{\alpha,p_{\beta,M}(\alpha)L}p_{\beta,M}.\]
\end{cor}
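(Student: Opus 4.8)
The plan is to deduce this from the dressing theorem (Theorem~\ref{thm:dressingso}) together with Liouville's theorem, which is the standard route to a permutability formula. Abbreviate $P=p_{\alpha,L}$ and $Q=p_{\beta,M}$, and set $N=P(\beta)M$ and $L''=Q(\alpha)L$, so that the assertion is the identity $p_{\beta,N}\,P=p_{\alpha,L''}\,Q$. Both sides are rational loops into $\SO(n,\C)$ that equal $\Id$ at $\infty$ and whose poles lie in $\{\alpha,\bar\alpha,\beta,\bar\beta\}$. Moreover, since $P(\beta),Q(\alpha)\in\SO(n,\C)$ preserve the bilinear form $\langle\cdot,\cdot\rangle$, the lines $N$ and $L''$ are again isotropic, so $p_{\beta,N}$ and $p_{\alpha,L''}$ are genuine simple factors of the form \eqref{eq:simpleso}.

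I would then study the single loop $\Phi:=(p_{\beta,N}\,P)(p_{\alpha,L''}\,Q)^{-1}=p_{\beta,N}\,P\,Q^{-1}\,p_{\alpha,L''}^{-1}$ and show that, as an $\End(\C^n)$-valued function, it is holomorphic at each of the four candidate poles. At $\beta$ I would regroup as $\Phi=\big(p_{\beta,N}\,P\,p_{\beta,M}^{-1}\big)\,p_{\alpha,L''}^{-1}$: the right factor is holomorphic at $\beta$ because $\beta\neq\alpha,\bar\alpha$, and the left factor is holomorphic at $\beta$ by Theorem~\ref{thm:dressingso} applied with $h=P$ (holomorphic at $\beta$), since its transformed line is exactly $P(\beta)^{-1}N=M$. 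At $\alpha$ I would instead regroup as $\Phi=p_{\beta,N}\,\big(P\,Q^{-1}\,p_{\alpha,L''}^{-1}\big)$: the left factor is holomorphic at $\alpha$, and the right factor $p_{\alpha,L}\,p_{\beta,M}^{-1}\,p_{\alpha,L''}^{-1}$ is holomorphic at $\alpha$ by Theorem~\ref{thm:dressingso} applied with $h=Q^{-1}$ (holomorphic at $\alpha$), whose transformed line is $Q(\alpha)L=L''$. Finally, since $P,Q,p_{\beta,N},p_{\alpha,L''}$ all satisfy the reality condition $\tau(g(\bar\lambda))=g(\lambda)$, and this condition is preserved under products and inverses, $\Phi$ satisfies it too; hence holomorphy at $\alpha$ and $\beta$ forces holomorphy at $\bar\alpha$ and $\bar\beta$, exactly as in the proof of Theorem~\ref{thm:genso}.

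Having excluded poles at all four points, $\Phi$ is holomorphic on the compact sphere $\cp{1}$, so each of its entries is a constant; evaluating at $\infty$ gives $\Phi(\infty)=\Id$, whence $\Phi\equiv\Id$ and the permutability identity follows. I expect the only delicate point to be the bookkeeping of the two line transformations: one must verify that, with the prescribed lines $N=P(\beta)M$ and $L''=Q(\alpha)L$, the transformed line in Theorem~\ref{thm:dressingso} collapses to precisely $M$ in the computation at $\beta$ and to precisely $L''$ in the computation at $\alpha$, so that the two simple factors sitting at each pole cancel. Once the conventions are aligned these identifications are immediate, and the remaining Liouville step is routine.
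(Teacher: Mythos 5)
Your proof is correct and is precisely the ``standard way'' that the paper invokes without spelling out (the corollary is left as an immediate consequence of Theorem~\ref{thm:dressingso}): cancel the poles at $\alpha$ and $\beta$ via the dressing theorem with the regroupings you describe, use the reality condition to dispose of $\bar\alpha$ and $\bar\beta$, and finish with Liouville's theorem and the normalization at $\infty$. Your bookkeeping of the transformed lines, $P(\beta)^{-1}N=M$ and $\bigl(Q^{-1}(\alpha)\bigr)^{-1}L=Q(\alpha)L=L''$, is exactly right.
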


\subsection{Generators for the twisted loop groups}

Let $\sigma$ be a holomorphic involution of $\SO(n,\C)$ satisfying $\tau \sigma=\sigma \tau$, thereby giving rise to a symmetric space $\SO(n)/K$, where $K$ is the fixed point set of $\sigma$ in $\SO(n)$. Denote the derivative $\ed\sigma_1\in\Aut(\g)$ by $\sigma$ also. Recall that we say a loop $g$ is {\bf twisted} if
\[\sigma(g(-\lambda))=g(\lambda).\]
Note that for any such $\sigma$, there exists $s\in \Or(n)$ such that $\sigma(A)=sAs^{-1}$. In the case of the Grassmannians, $s$ is given by 
\[
s=\left(\begin{matrix}I_k & 0 \\ 0 & -I_{n-k}\end{matrix}\right)
\]
for some $k$ and for $\SO(2m)/\Un(m)$,
\[
s=\left(\begin{matrix}0 & I_m \\ -I_m & 0 \end{matrix}\right).
\]
Note in particular that $s\in \Or(n)$ sends isotropic lines in $\C^n$ onto isotropic lines.


\begin{lem}\label{lem:perm}
If $\alpha\notin i\R$, and $p_{\alpha,L}$ is a simple element, then the product
\[q_{\alpha,L}(\lambda):=
p_{-\alpha,p_{\alpha,L}(-\alpha)s L}p_{\alpha,L}= p_{\alpha,p_{-\alpha,s L}(\alpha)L}p_{-\alpha,sL}\]
satisfies the twisting condition.
\end{lem}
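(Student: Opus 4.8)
The plan is to verify that $q_{\alpha,L}$ satisfies the twisting condition $\sigma(q_{\alpha,L}(-\lambda)) = q_{\alpha,L}(\lambda)$ by combining the permutability corollary with a direct computation of how $\sigma$ and the substitution $\lambda \mapsto -\lambda$ act on a single simple element. First I would establish the key transformation rule: since $\sigma(A) = sAs^{-1}$ with $s \in \Or(n)$ sending isotropic lines to isotropic lines, and since the simple factors \eqref{eq:simpleso} are built from hermitian projections onto isotropic lines and their conjugates, I would show that
\[
\sigma\bigl(p_{\alpha,L}(-\lambda)\bigr) = p_{-\alpha,\,sL}(\lambda).
\]
To see this, note that replacing $\lambda$ by $-\lambda$ turns the rational coefficient $\tfrac{-\lambda-\alpha}{-\lambda-\bar\alpha} = \tfrac{\lambda+\alpha}{\lambda+\bar\alpha} = \tfrac{\lambda-(-\alpha)}{\lambda-\overline{(-\alpha)}}$, while conjugation by $s$ sends $\pi_L$, $\pi_{\bar L}$, $\pi_{(L\oplus\bar L)^\perp}$ to $\pi_{sL}$, $\pi_{\overline{sL}}$, $\pi_{(sL\oplus\overline{sL})^\perp}$ respectively (using that $s$ is real orthogonal so it commutes with conjugation and preserves the hermitian inner product). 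Matching the coefficients of the two expressions then yields exactly $p_{-\alpha,sL}(\lambda)$.

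With this rule in hand, I would compute $\sigma(q_{\alpha,L}(-\lambda))$ directly from the second of the two equivalent product expressions for $q_{\alpha,L}$. Applying $\sigma$ (an automorphism) distributes across the product, so
\[
\sigma\bigl(q_{\alpha,L}(-\lambda)\bigr) = \sigma\bigl(p_{\alpha,\,p_{-\alpha,sL}(\alpha)L}(-\lambda)\bigr)\,\sigma\bigl(p_{-\alpha,sL}(-\lambda)\bigr).
\]
By the transformation rule, the second factor becomes $p_{\alpha,\,s(sL)}(\lambda) = p_{\alpha,L}(\lambda)$ since $s^2$ acts trivially on lines (as $s$ is an involution on $\C^n$ up to sign), and the first factor becomes $p_{-\alpha,\,s\,p_{-\alpha,sL}(\alpha)L}(\lambda)$. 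The goal is to recognize this product as the first of the two equivalent expressions for $q_{\alpha,L}$, namely $p_{-\alpha,\,p_{\alpha,L}(-\alpha)sL}\,p_{\alpha,L}$. I expect the main obstacle to be checking that the Lagrangian/isotropic-line labels match up correctly, i.e.\ that $s\,p_{-\alpha,sL}(\alpha)L = p_{\alpha,L}(-\alpha)sL$; this requires pushing $s$ through the evaluation $p_{-\alpha,sL}(\alpha)$ using the same conjugation rule at the level of endomorphisms (rather than loops) and tracking how the scalar coefficients evaluate at conjugate points. Once these labels are reconciled, the two expressions for $q_{\alpha,L}$ coincide, and the twisting identity follows.

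Because the lemma already asserts that the two product expressions for $q_{\alpha,L}$ are equal (this equality is precisely the content of the permutability corollary applied with $\beta = -\alpha$, $M = sL$, which is legitimate since $\alpha \neq -\alpha, \overline{-\alpha}$ when $\alpha \notin i\R \cup \R$), I can freely pass between them during the computation. The cleanest writeup will therefore isolate the transformation rule $\sigma(p_{\alpha,L}(-\lambda)) = p_{-\alpha,sL}(\lambda)$ as a preliminary observation, then apply $\sigma$ to the product form and read off the result, invoking permutability exactly once to identify the outcome with $q_{\alpha,L}(\lambda)$. The twisting condition for the individual building blocks is really governed by Lemma \ref{lem:twisting} in spirit, but here the subtlety is that $\alpha \notin i\R$ means no single simple factor is twisted on its own; it is the symmetrized product $q_{\alpha,L}$ that restores the symmetry, which is exactly why the permutability formula is the essential ingredient.
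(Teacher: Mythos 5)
Your proposal is correct and fills in exactly what the paper leaves implicit: the paper's entire proof is the one-line remark that the lemma ``follows immediately from the permutability proposition,'' and your argument --- the transformation rule $\sigma(p_{\alpha,L}(-\lambda))=p_{-\alpha,sL}(\lambda)$, applying $\sigma$ to one product form, pushing $s$ through the evaluated endomorphism to reconcile the line labels, and invoking permutability (with $\beta=-\alpha$, $M=sL$, legitimate since $\alpha\notin\R\cup i\R$) to identify the result with the other form --- is precisely the intended computation. No gaps; the label-matching step you flag as the main obstacle works exactly by the mechanism you describe, since $s\,p_{-\alpha,sL}(\alpha)\,s^{-1}=\sigma(p_{-\alpha,sL}(\alpha))=p_{\alpha,L}(-\alpha)$ and $s(sL)=L$ as lines.
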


The proof follows immediately from the permutability proposition.

\begin{thm}
The elements
\begin{gather*}
p_{\alpha,L}(\lambda)=\left(\frac{\lambda-\bar\alpha}{\lambda-\alpha}\right)\pi_{\ol{L}} +\pi_{(L\oplus\bar L)^\perp} +\left(\frac{\lambda-\alpha}{\lambda-\bar\alpha}\right)\pi_{L},\quad\text{where }sL=\bar L,\ \text{and }\alpha\in i\R,\\
{\text and}\;\;\; q_{\alpha,L}(\lambda),\quad\text{where }\alpha\in\C\setminus(\R\cup i\R),\ \text{and }L\subset\C^n\ \text{isotropic,}
\end{gather*}
generate the twisted loop group $\rlgmtw(\SO(n,\C),\C^n)$.
\end{thm}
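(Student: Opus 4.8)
The plan is to mirror the structure of the proof of the previous theorem (generators for $\rlgmtw(\CSp(n,\C),\C^{2n})$), separating the poles of a twisted loop $g\in\rlgmtw(\SO(n,\C),\C^n)$ into those lying in $i\R$ and those not. For a non-imaginary pole $\alpha\in\C\setminus(\R\cup i\R)$, I would first invoke Theorem \ref{thm:genso} to produce an isotropic line $L$ such that $p_{\alpha,L}g$ has a pole of strictly lower degree at $\alpha$. The issue is that $p_{\alpha,L}$ alone need not preserve the twisting condition, so instead I would multiply by the combination $q_{\alpha,L}$ from Lemma \ref{lem:perm}. Since $q_{\alpha,L}=p_{\alpha,L}$ composed (on the left) with a simple factor $p_{-\alpha,\,\cdot}$ whose pole is at $-\alpha$ (not at $\alpha$ or $\bar\alpha$), multiplication by this extra factor is holomorphic and invertible at $\alpha$ and so does not raise the pole order there; hence $q_{\alpha,L}g$ still has strictly lower degree at $\alpha$ while now satisfying the twisting condition. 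Iterating removes the pole at $\alpha$ (and simultaneously at $\bar\alpha$, by reality), with any residual zero handled by the same device using inverses, exactly as in Theorem \ref{thm:gencsp}.

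The genuinely new case is a purely imaginary pole $\alpha=ir$, where one must use the real-Lagrangian simple factors $p_{\alpha,L}$ with $sL=\bar L$ and $\alpha\in i\R$, which are themselves already twisted by Lemma \ref{lem:twisting}. Writing the expansion $g(\la)=\sum_{j\ge -k}\bigl(\tfrac{\la-ir}{\la+ir}\bigr)^j g_j$ with $g_{-k}\neq0$, the key is to show that $V:=\Im g_{-k}$ is contained in an isotropic line's associated configuration compatible with $s$, i.e.\ that $V$ lies in an isotropic subspace with $sV=\bar V$. I would extract this from combining the two conditions on $g$ simultaneously. The reality condition $g(\la)=\bar{g}(\bar\la)$ (here $\tau(A)=\bar A$) together with $\SO(n,\C)$-orthogonality gives, as in Theorem \ref{thm:genso}, that $V$ is isotropic. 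The twisting condition $\sigma(g(-\la))=g(\la)$, i.e.\ $g(\la)=s\,g(-\la)\,s^{-1}$, contributes an additional constraint relating $V$ and $sV$.

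The heart of the argument is to turn these two relations into the statement that $V+sV$ (or $V+\bar V$) is an $s$-invariant isotropic space sitting inside a suitable real isotropic subspace, which then lies in a line $L$ with $sL=\bar L$, so that the twisted simple factor $p_{ir,L}$ applies. Concretely, I would expand the combined reality+twisting relation $\langle g(-\bar\la)v,\,s\,g(\la)\,s^{-1}w\rangle = \langle v,w\rangle$ (the $\SO(n,\C)$-analogue of the $(g_{-k}v,Jg_{-k}Jw)=0$ computation in the $\CSp$ proof) as a power series in $\tfrac{\la-ir}{\la+ir}$; the lowest-order term should force $V\perp sV$, which when combined with isotropy of $V$ and reality yields the desired real isotropic containment. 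Once $\Im g_{-k}$ is placed inside such an $L$, I would run the same rank-reduction argument as in Theorem \ref{thm:genso}: multiplying by $p_{ir,L}$ kills the leading Laurent coefficient and strictly drops $\rank g_{-k}$, while preserving both reality and twisting since $p_{ir,L}$ is itself real and twisted. Iterating removes the imaginary pole, and the possibly resulting zero is cleared by the same procedure with $p_{ir,L}^{-1}$. After all poles are removed the loop is holomorphic on $\cp{1}$ with value $\Id$ at $\infty$, so Liouville's theorem forces it to be the identity, and $g$ has been factored into the listed generators.

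I expect the main obstacle to be precisely the imaginary-pole linear algebra: verifying that the simultaneous reality and twisting conditions genuinely force $\Im g_{-k}$ into an isotropic line $L$ satisfying $sL=\bar L$, rather than merely into some isotropic line. The subtlety is that $s\in\Or(n)$ may be either the Grassmannian involution or the $\SO(2m)/\Un(m)$ involution, and one must check the containment uniformly; the clean way is to show $V+\bar V$ is a real $s$-invariant isotropic subspace and then select within it a line $L$ with $sL=\bar L$, which is possible exactly because $s$ preserves isotropy (as noted before Lemma \ref{lem:perm}).
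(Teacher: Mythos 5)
Your treatment of the non-imaginary poles is correct and is essentially the paper's own argument: produce an isotropic line $L$ from Theorem \ref{thm:genso}, pass to $q_{\alpha,L}$ from Lemma \ref{lem:perm}, and observe that the extra factor $p_{-\alpha,\,\cdot}$ has its poles at $-\alpha,-\ol{\alpha}$, so it is holomorphic and invertible near $\alpha$ and cannot undo the degree reduction. (Your remark about clearing ``residual zeros'' as in Theorem \ref{thm:gencsp} is vacuous here: loops in $\SO(n,\C)$ have determinant $1$, so the absence of a pole already implies holomorphicity; this is why the orthogonal proofs in the paper contain no zero-removal step.)

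The purely imaginary poles are where your proposal has a genuine gap, exactly at the step you yourself flag as the main obstacle. You want to show that $V+\ol{V}$, with $V=\Im g_{-k}$, is a real ($s$-invariant) isotropic subspace and then choose $L\subset V+\ol{V}$ with $sL=\ol{L}$. This is impossible, not merely difficult: for the complex bilinear form $\langle\cdot,\cdot\rangle$ extending a positive definite real inner product one has $\langle v,\ol{v}\rangle=(v,v)>0$ for every $v\neq 0$, so there exist \emph{no} nonzero conjugation-invariant isotropic subspaces at all --- the proof of Theorem \ref{thm:genso} relies on precisely this fact (``no real subspaces are isotropic''). The $\CSp(n,\C)$ template you are copying works only because real Lagrangians exist for the symplectic form; that is the exact point where the analogy breaks. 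Moreover, what your power-series computation actually yields --- stated correctly it must use the Hermitian pairing $(v,w)=\trp{\ol{v}}w$, since your displayed identity is false for the bilinear form --- is $(g_{-k}v,\,sg_{-k}s^{-1}w)=0$ for all $v,w$, i.e.\ $(V,sV)=0$. Together with isotropy of $V$ this is strictly too weak to produce the required line: for $s=\diag(1,1,-1,-1)$ the isotropic line $V$ spanned by $e_1+ie_2+ie_3-e_4$ satisfies $(V,sV)=0$, yet contains no line $L$ with $sL=\ol{L}$. So no argument from these two conditions alone can close the proof.

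The missing idea is to dispense with the inner product entirely and compare Laurent coefficients in the combined symmetry $\ol{g(\ol{\la})}=\sigma(g(-\la))$: both sides are series in $\frac{\la+ir}{\la-ir}$, and equating coefficients gives $\sigma g_j=\ol{g_j}$ for every $j$. In particular $s(\Im g_{-k})=\ol{\Im g_{-k}}$, so the antilinear map $v\mapsto s\ol{v}$ preserves $\Im g_{-k}$; being an involution there, it fixes a line $L\subset\Im g_{-k}$, i.e.\ $s\ol{L}=L$, equivalently $sL=\ol{L}$, and this $L$ is automatically isotropic because $\Im g_{-k}$ is. With that $L$ the rank-reduction argument of Theorem \ref{thm:genso} applies verbatim to $p_{ir,L}g$, which is real and twisted, and your induction together with the Liouville ending then goes through as you wrote it.
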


\begin{proof} Let $g\in \rlgmtw(\SO(n,\C),\C^n)$. We proceed by induction on degree, considering the cases of purely imaginary poles and non-imaginary poles separately.  First assume that $\alpha$ is a non-imaginary pole of $g$. Observe that then,  $\bar\alpha, -\alpha,$ and $-\bar\alpha$ are also poles of $g$. If we manage to remove the pole at $\alpha$ by successively multiplying with loops satisfying the reality and twisting conditions, we automatically remove these other poles as well. 
Applying Theorem \ref{thm:genso}, we see that there exists an isotropic line $L$ such that
$p_{\alpha,L}g$
has lower total degree at $\alpha$. Then, $q_{\alpha,L}g$ does so, too, and also satisfies the twisting condition. Continuing in the same way, we are able to remove the pole at $\alpha$.

Suppose that $\alpha=ir$ is a purely imaginary pole of $g\in \rlgmtw(\SO(n,\C),\C^n)$ of degree $k$, and expand $g$ as before. 
The twisting and reality conditions
\[
\bar{g(\bar{\la})}=g(\la)=\sigma g(-\la)
\]
 imply 
\[
\sum_{j=-k}^\infty \left(\frac{\la+ir}{\la-ir}\right)^j \bar{g_j}  =\sum_{j=-k}^\infty \left(\frac{\la+ir}{\la-ir}\right)^j \sigma { g_j},\]
hence $\sigma g_j=\ol{g_j}$ for all $j$. In particular $\sigma g_{-k}=\ol{ g_{-k}}$, from which we see that $\ol{\Im g_{-k}}=\Im{\ol{g_{-k}}}=\Im\sigma g_{-k}=s(\Im g_{-k})$. Therefore, $s$ composed with conjugation is an involutive endomorphism of $\Im g_{-k}$ and we may choose a fixed line $L$: i.e., $s\bar L=L\subset\Im g_{-k}$.

By the proof of Theorem \ref{thm:genso}, the map $p_{ir,L}g$ has a pole of lower total degree at $ir$ than $g$. This loop clearly satisfies the reality and twisting conditions and is thus in $\rlgmtw(\SO(n,\C),\C^n)$. By induction we may entirely factor out the poles on $i\R^\times$.

As before we conclude from Liouville's theorem that we have reduced the given loop to the identity using simple factors.
\end{proof}

\section{$\GGC$}

\subsection{The linear algebra of $\GGC$ and its fundamental representation}\label{sec:g2basics}
Let $\Oc$ denote the octonions, the unique real $8$-dimensional division algebra, equipped with the natural metric $\langle x,y \rangle=\Re(x\cdot \bar{y})=\frac{1}{2}(x\cdot \bar{y}+y \cdot \ol{x})$. The compact simple Lie group $\GG$ is known to be the automorphism group of $\Oc$. Since the metric is defined via the multiplication, we get $\GG\subset \SO(\Oc)$. The subspace $\R\cdot 1\subset \Oc$ is fixed by $\GG$, so if we identify $\Im(\Oc)=\R^7$, we obtain the fundamental representation $\GG\subset \SO(7)$.

Let $1,e_1,\ldots,e_7$ be the standard orthonormal basis of $\R^8 \cong \Oc$.  The octonionic multiplication table is displayed in Appendix \ref{sec:g2}.

\begin{defn} \hspace{1cm}\\ \vspace{-0.4cm}
\begin{itemize}
\item A $3$-plane $A \subset \R^7$ is {\bf associative} if it generates an associative subalgebra of $\Oc$.
\item A $4$-plane $C \subset \R^7$ is {\bf coassociative} if $C^\perp$ is associative.
\end{itemize}
\end{defn}
\begin{rem}
In the literature the title \us{(}co\us{)}associative also implies the choice of a particular orientation which makes the plane calibrated \cite{Harvey1982}.  We do not need the calibrations or the orientations and so use a slightly weaker definition.
\end{rem}
Our convention was chosen so that $e_1 \w e_2 \w e_3 \w e_4$ is coassociative. As they will be heavily relied upon here, we record the following well known facts concerning the action of $\GG$ on $\R^7$.

\begin{lem} \label{lem:realg2trans}
\hspace{1cm}\\ \vspace{-0.4cm}
\begin{enumerate}
\item{Each $2$-plane $E \subset \R^7$ is contained in a unique associative $3$-plane $E_+$.}
\item{$\GG$ acts transitively on the Grassmannian of oriented $2$-planes $E \subset \R^7$ with stabilizer $\Un(2)$ acting irreducibly on $(E_+)^{\perp}$.     }
\end{enumerate}
\end{lem}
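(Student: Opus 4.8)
The statement collects two standard facts about the $\GG$-action on $\R^7=\Im(\Oc)$. My plan is to handle the two claims in order, treating the first purely algebraically and the second via a dimension/isotropy count for the compact group $\GG$.

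For part (1), given a $2$-plane $E=\mathrm{span}\{x,y\}$ with $x,y\in \Im(\Oc)$, I would form the $3$-plane $E_+:=\mathrm{span}\{x,y,xy\}$ and argue it is the unique associative plane containing $E$. The key input is the octonion identities: for imaginary octonions the product $xy$ decomposes into its real part $-\langle x,y\rangle$ and an imaginary cross-product part, and the associator $[x,y,z]=(xy)z-x(yz)$ is alternating and vanishes precisely when $z$ lies in the subalgebra generated by $x,y$. Concretely, I would verify that the subalgebra generated by two imaginary octonions is a copy of the quaternions $\Q$ (this is Artin's theorem: any two octonions generate an associative subalgebra), whose imaginary part is the $3$-plane $E_+$. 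For existence this is immediate; for uniqueness, any associative $3$-plane $A\supset E$ generates an associative subalgebra containing $x,y$, hence containing the quaternion algebra they generate, and a dimension count forces $\Im(A)=E_+$. The main subtlety here is the genericity issue when $x,y$ already anticommute versus not, but Artin's theorem handles both cases uniformly, so I expect part (1) to be routine once the octonion algebra is in hand.

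For part (2), I would first establish transitivity and then identify the stabilizer. Transitivity on oriented $2$-planes follows because $\GG$ acts transitively on unit imaginary octonions (the unit sphere $S^6\subset \Im(\Oc)$, since $\GG/\SU(3)\cong S^6$), and then, having fixed $x$, the stabilizer of $x$ acts transitively on the unit circle of directions orthogonal to $x$ inside the associated quaternionic plane. The cleanest route is a dimension count: $\dim\GG=14$, the oriented Grassmannian $\widetilde{\Gr}_2(\R^7)$ has dimension $2\cdot 5=10$, so a transitive action would have isotropy of dimension $4$, matching $\dim\Un(2)=4$. To pin down that the stabilizer is exactly $\Un(2)$ and that it acts irreducibly on $(E_+)^\perp\cong\R^4$, I would use the decomposition $\R^7=E\oplus \R\cdot(xy)\oplus (E_+)^\perp$ determined by the choice of $E$: an automorphism fixing $E$ (as an oriented plane) must preserve $E_+$ and hence $(E_+)^\perp$, and the residual action on the $4$-plane $(E_+)^\perp$, which carries a natural complex structure coming from multiplication by the unit imaginary octonion spanning the line orthogonal to $E$ inside $E_+$, identifies the stabilizer with $\Un(2)$ acting in its standard (irreducible) representation on $\C^2\cong\R^4$.

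The main obstacle I anticipate is not transitivity but the precise identification of the isotropy representation as the \emph{irreducible} standard $\Un(2)$-action on $(E_+)^\perp$. This requires exhibiting the complex structure on the coassociative complement explicitly and checking that the induced map from the stabilizer into $\SO(4)$ lands in, and surjects onto, $\Un(2)$. I would produce this by choosing the adapted basis so that $E=\mathrm{span}\{e_1,e_2\}$, computing $E_+=\mathrm{span}\{e_1,e_2,e_4\}$ (or the appropriate associative triple from the multiplication table in Appendix \ref{sec:g2}), and verifying that left-multiplication by the unit vector spanning $E_+\cap E^\perp$ squares to $-\Id$ on $(E_+)^\perp$, giving the complex structure $J$; irreducibility then follows since $\Un(2)$ acts irreducibly on $\C^2$. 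Since these are well-known facts, I would keep the verification terse, citing the octonion multiplication table rather than grinding through every associator.
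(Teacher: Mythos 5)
The paper offers no proof of this lemma to compare against: the authors explicitly record both statements as ``well known facts'' about the $\GG$-action on $\R^7$ (they are standard, essentially from \cite{Harvey1982}) and move straight on to choosing a torus in $\SO(4)$. So you have supplied a proof where the paper supplies a citation. Your outline is the standard argument and its skeleton is sound: $E_+=\mathrm{span}\{x,y,xy\}$ exists and is associative by Artin's theorem, and is unique because any associative $3$-plane containing $E$ gives a $4$-dimensional associative subalgebra containing the quaternion algebra generated by $x,y$; the stabilizer of the \emph{oriented} plane $E$ fixes $u=xy$ (a rotation of $(x,y)$ by angle $\theta$ sends $xy$ to itself since $x,y$ anticommute), hence commutes with $J=L_u$ on $(E_+)^\perp$, where $J^2=-\Id$ by alternativity, so it embeds in $\Un(2)$; and real-irreducibility of the standard $\Un(2)$-action on $\C^2\cong\R^4$ is immediate because invariance under the central circle forces a real invariant subspace to be complex.

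Three points need tightening before this is a proof. First, your transitivity argument as stated is off: having fixed a unit $x$, you need its stabilizer $\SU(3)$ to act transitively on the whole sphere $S^5\subset x^\perp$ (which it does, via the standard action on $\C^3\cong x^\perp$), not on a ``unit circle inside the associated quaternionic plane'' --- that plane is not even defined until $y$ is chosen; equivalently, quote transitivity of $\GG$ on Cayley triples. Second, a dimension count alone never proves transitivity; it does so here only when combined with the observations that a full-dimensional orbit is open, orbits of a compact group are closed, and the oriented Grassmannian is connected --- and to bound the isotropy dimension by $4$ you already need the embedding $\stab(E)\hookrightarrow\Un(2)$, so the logical order matters. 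The same open-closed-connected argument (or an explicit construction) is what upgrades that embedding to an isomorphism; injectivity at least is easy: if $g$ acts trivially on $(E_+)^\perp$ and by rotation $\theta$ on $E$, then $xv=g(xv)=\cos\theta\,xv+\sin\theta\,yv$ for $v\in(E_+)^\perp$, and since $xv\perp yv$ this forces $\theta=0$. Third, a small slip: with the paper's multiplication table $e_1e_2=-e_5$, so for $E=\mathrm{span}\{e_1,e_2\}$ one has $E_+=\mathrm{span}\{e_1,e_2,e_5\}$, not $\mathrm{span}\{e_1,e_2,e_4\}$, consistent with the paper's convention that $e_1\w e_2\w e_3\w e_4$ is coassociative.
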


In $\SO(4)=\stab(e_1 \w e_2 \w e_3 \w e_4)$ we can choose the torus $T$ whose Lie algebra is the span of \\
\centerline{
$H_1=\bp 
0&-1&0&0&0&0&0\\
1&0&0&0&0&0&0\\
0&0&0&1&0&0&0\\
0&0&-1&0&0&0&0\\
0&0&0&0&0&0&0\\
0&0&0&0&0&0&0\\
0&0&0&0&0&0&0\\
\ep\;\;{\rm and }\;\; H_2=\bp 
0&-1&0&0&0&0&0\\
1&0&0&0&0&0&0\\
0&0&0&-1&0&0&0\\
0&0&1&0&0&0&0\\
0&0&0&0&0&0&0\\
0&0&0&0&0&0&-2\\
0&0&0&0&0&2&0\\
\ep.$}

Let $\Oc_\C=\Oc\otimes \C$; then, $\GGC:=\GG^\C\subset \SO(7,\C)$ is the automorphism group of $\Oc_\C$.  From now on, if $x,y\in \C^7=\Im(\Oc)\otimes\C$, the product $x\cdot y\in \C^7$ will denote the octonionic-imaginary part of the product of $x$ and $y$.  The torus $T$ induces the following weight diagram of $\C^7=\Im(\Oc)\otimes\C$:

\begin{center}
\begin{figure}
\includegraphics[scale=.8]{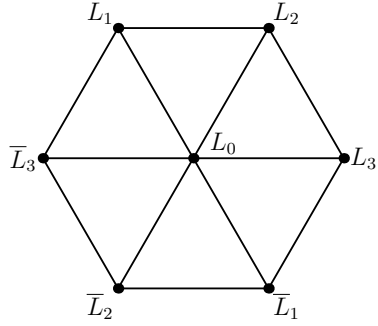}
\caption{Weight diagram for the fundamental representation of $\GGC$.}
\end{figure}
\end{center}

The weights are $\mu_0=0$, $\pm \mu_1,\pm \mu_2,\pm \mu_3$, where
\begin{align*}
\mu_1(H_1)=-i,\;\; \mu_1(H_2)=-i\\
\mu_2(H_1)=-i, \;\; \mu_2(H_2)=i\\
\mu_3(H_1)=0, \;\; \mu_3(H_2)=2i
\end{align*}
and the weight spaces $L_i$ of $\mu_i$ are given by  $L_1=\C \cdot \{e_1+ie_2\}$, $L_2=\C \cdot \{e_3-ie_4\}$, $L_3=\C \cdot \{e_6-ie_7\}$, and  $L_0=\C \cdot \{e_5\}$.  The product $L_i\cdot L_j$ is the weight space for $\mu_i+\mu_j$, if this is a weight, and zero otherwise.  For example, $L_1\cdot L_3=L_2$. 

\begin{defn}
An isotropic $2$-plane $\mcc \subset \C^7$ satisfying $\mcc\cdot\mcc=0$ is {\bf complex coassociative}.  
\end{defn}
Either from the weight diagram or the multiplication table in the appendix, one calculates that $L_1\cdot L_2=0$, i.e.~$L_1\oplus L_2$ is complex coassociative.
Any complex coassociative plane $\mcc$ is isotropic by definition and thus induces an orthogonal decomposition 
\begin{equation}
\C^7=\mcc\oplus \mca\oplus \ol{\mcc},\label{eq:decomp}
\end{equation}
where $\mca=(\mcc\oplus\ol{\mcc})^\perp$ is called {\bf complex associative}.

\begin{lem}\label{lem:setofc}
Let $L \subset \C^7$ be an isotropic line.  
\begin{enumerate}
\item
There is a unique isotropic $2$-plane $\mcb \subset (L \oplus \bar{L})^{\perp}$ such that any line in $\mcb$ multiplies with $L$ to be zero.  Conversely, for any complex coassociative $\mcc \supset L$ we can choose a $K \in \mcb$ such that $\mcc=L \oplus K$.
\item
If $K$ is any other line, then $L \cdot K=0$ implies that $K$ is isotropic and therefore $L \oplus K$ is complex coassociative. 
\end{enumerate}
\end{lem}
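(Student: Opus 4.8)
The plan is to reduce to the explicit model line $L=L_1$ and then read everything off the weight diagram of the fundamental representation. Every notion appearing in the statement---the symmetric form $\langle\cdot,\cdot\rangle$ and hence isotropy and ${}^\perp$, the product $x\cdot y$, complex coassociativity, and the condition that a line multiply with $L$ to zero---is preserved by $\GGC$, whose elements are algebra automorphisms lying in $\SO(7,\C)$. Moreover $\GG$ acts transitively on isotropic lines: writing a generator as $u+iv$ with $u,v\in\R^7$, the isotropy condition $\langle u+iv,u+iv\rangle = |u|^2-|v|^2+2i\langle u,v\rangle=0$ forces $u\perp v$ and $|u|=|v|$, so after rescaling $(u,v)$ is an orthonormal pair; since the line $\C(u+iv)$ depends only on the oriented plane $u\wedge v$, transitivity of $\GG$ on oriented $2$-planes (Lemma~\ref{lem:realg2trans}(2)) yields transitivity on isotropic lines. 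It therefore suffices to treat $L=L_1$.

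The heart of the argument is to compute the kernel of the multiplication operator $m_\ell\colon v\mapsto \ell\cdot v$, for $\ell$ a generator of $L_1$. Because the product is $\GGC$-equivariant it respects the weight grading, with $L_i\cdot L_j=L_{\mu_i+\mu_j}$ when $\mu_i+\mu_j$ is a weight and $0$ otherwise. Reading off the weight diagram, $m_\ell$ annihilates $L_1,L_2,\overline{L_3}$ (since $2\mu_1$, $\mu_1+\mu_2$, $\mu_1-\mu_3$ are not weights) and is injective on $L_0\oplus\overline{L_1}\oplus\overline{L_2}\oplus L_3$, which it carries to the distinct lines $L_1,L_0,\overline{L_3},L_2$. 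Hence
\[
\ker m_\ell = L_1\oplus L_2\oplus \overline{L_3},
\]
a $3$-dimensional subspace. I will record that it is totally isotropic: by invariance $\langle V_\mu,V_\nu\rangle=0$ unless $\mu+\nu=0$, and no two of the nonzero weights $\mu_1,\mu_2,-\mu_3$ are opposite, so the form vanishes identically on $\ker m_\ell$. The same bookkeeping gives $\ker m_\ell\cap(L\oplus\overline{L})^\perp=L_2\oplus\overline{L_3}$, using $(L_1\oplus\overline{L_1})^\perp=\bigoplus_{\mu\neq\pm\mu_1}V_\mu$.

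With this in hand the lemma follows quickly. For (1), set $\mcb:=L_2\oplus\overline{L_3}$; it is an isotropic $2$-plane, and every line in it kills $L$. Any competing $2$-plane in $(L\oplus\overline{L})^\perp$ all of whose lines kill $L$ lies in $\ker m_\ell\cap(L\oplus\overline{L})^\perp=\mcb$, forcing equality and giving uniqueness. For the converse, given complex coassociative $\mcc\supset L$ write $\mcc=L\oplus\C k$; antisymmetry of the product gives $\mcc\cdot\mcc=\C(\ell\cdot k)$, so $\mcc\cdot\mcc=0$ means $k\in\ker m_\ell=L\oplus\mcb$, and subtracting the $L$-component replaces $k$ by a nonzero $k'\in\mcb$ with $\mcc=L\oplus\C k'$, i.e.\ $K=\C k'\subset\mcb$. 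For (2), a line $K=\C k\subset\ker m_\ell$ other than $L$ is isotropic since $\ker m_\ell$ is totally isotropic, and then $L\oplus K$ is isotropic with $(L\oplus K)\cdot(L\oplus K)=\C(\ell\cdot k)=0$, hence complex coassociative.

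The only genuinely delicate point is the kernel computation: one must be sure $m_\ell$ has kernel of dimension exactly $3$, with no accidental extra degeneracy. This is guaranteed by the stated multiplication rule on weight spaces---that $L_i\cdot L_j$ is \emph{all} of $L_{\mu_i+\mu_j}$, not merely contained in it, whenever $\mu_i+\mu_j$ is a weight---so the four ``non-killing'' products are genuinely nonzero. Everything else is elementary once $\ker m_\ell$ is identified as a maximal isotropic $3$-plane containing $L$.
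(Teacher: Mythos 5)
Your proof is correct and takes essentially the same approach as the paper's: reduce to the model line $L=L_1$ via the transitivity statement of Lemma \ref{lem:realg2trans}, then read everything off the weight diagram, where $\mcb=L_2\oplus\ol{L_3}$ and the kernel of multiplication by $L$ is the isotropic $3$-plane $L\oplus\mcb$. Your write-up merely makes explicit the kernel computation, the isotropy check, and the passage from transitivity on oriented $2$-planes to transitivity on isotropic lines, all of which the paper compresses into ``a look at the weight diagram.''
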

\begin{proof}
By part two of Lemma \ref{lem:realg2trans}, we can assume without loss of generality that $L=L_1$.  Then $(L\oplus \ol{L})^\perp=L_2\oplus \ol{L_2}\oplus L_3\oplus \ol{L_3}\oplus L_0$.  A look at the weight diagram shows that $\mcb=L_2\oplus \ol{L_3}$, which shows part one.  Part two follows from the fact that $K$ has to lie in the isotropic $3$-plane $L\oplus \mcb$.
\end{proof}

\begin{prop}\label{prop:coasstrans}
The compact real group $\GG$ acts transitively on the space of complex coassociative planes.
\end{prop}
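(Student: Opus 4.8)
The plan is to deduce transitivity from two facts already available: that $\GG$ is transitive on isotropic lines, and that the stabilizer of a fixed isotropic line is transitive on the complex coassociative planes through it. The model plane will be $\mcc_0=L_1\oplus L_2$, which is complex coassociative since $L_1\cdot L_2=0$.

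First I would reduce to the case $L_1\subset\mcc$. Any complex coassociative plane $\mcc$ is isotropic, so every line it contains is isotropic; fix one such line $L\subset\mcc$. An isotropic line has the form $\C\cdot(u+iv)$ with $u,v\in\R^7$ satisfying $\langle u,u\rangle=\langle v,v\rangle$ and $\langle u,v\rangle=0$; after rescaling, $u,v$ are orthonormal and span an oriented $2$-plane $E=\mathrm{span}_\R(u,v)$, and rotating $(u,v)$ inside $E$ only multiplies $u+iv$ by a unit complex number. Thus isotropic lines correspond to oriented $2$-planes, and since $\GG\subset\SO(7)$ acts transitively on the latter (Lemma \ref{lem:realg2trans}(2)), it acts transitively on the former. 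Choosing $g\in\GG$ with $gL=L_1$, and using that $\GG$ preserves both the metric and the octonionic product, $g\mcc$ is again complex coassociative and contains $L_1$.

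Next, by Lemma \ref{lem:setofc}(1) every complex coassociative plane containing $L_1$ is of the form $L_1\oplus K$ for a line $K\subset\mcb(L_1)=L_2\oplus\ol{L_3}$, and $L_2$ is itself such a line. Hence it remains to show that $\stab_{\GG}(L_1)$ acts transitively on the $\cp{1}$ of lines in $\mcb(L_1)$. Note that $\stab_{\GG}(L_1)$ equals the stabilizer of the oriented plane $E=\mathrm{span}_\R(e_1,e_2)$, which by Lemma \ref{lem:realg2trans}(2) is $\Un(2)$ acting irreducibly on the real $4$-plane $(E_+)^\perp$.

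The main step, and the point I expect to require the most care, is the transitivity of this $\Un(2)$-action on the lines in $\mcb(L_1)$. The identification $(E_+)^\perp\cong\C^2$ as the standard $\Un(2)$-module furnishes a $\Un(2)$-invariant complex structure whose $\pm i$-eigenspaces are the two isotypic components of the complexification $(E_+)^\perp\otimes\C=L_2\oplus\ol{L_2}\oplus L_3\oplus\ol{L_3}$. Since the construction of $\mcb(L_1)$ through multiplication by $L_1$ is $\stab_{\GG}(L_1)$-equivariant, $\mcb(L_1)$ is a complex $2$-dimensional $\Un(2)$-invariant subspace; because the standard and conjugate representations of $\Un(2)$ are inequivalent (they carry opposite central characters), the only such subspaces are these two eigenspaces. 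Therefore $\Un(2)$ acts on $\mcb(L_1)$ as (a conjugate of) its standard representation on $\C^2$, which is transitive on lines. Picking $h\in\stab_{\GG}(L_1)$ with $hK=L_2$ then yields $hg\mcc=L_1\oplus L_2=\mcc_0$, proving transitivity.
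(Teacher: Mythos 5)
Your proof is correct and follows essentially the same route as the paper's: move one line of $\mcc$ to $L_1$ using the transitivity in Lemma \ref{lem:realg2trans}, locate the complementary line inside $\mcb=L_2\oplus\ol{L_3}$ via Lemma \ref{lem:setofc}, and then move that line to $L_2$ with the stabilizer of $L_1$. The only difference is that you justify in detail (via the inequivalence of the standard and conjugate $\Un(2)$-representations) why $\stab_{\GG}(L_1)\cong\Un(2)$ acts transitively on the lines of $\mcb$, a step the paper asserts by citing Lemma \ref{lem:realg2trans} without elaboration.
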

\begin{proof}
Let $\mcc=L\oplus K$ be a decomposition of a complex coassociative plane into lines such that $L\perp \ol{K}$. 
By part two of Lemma \ref{lem:realg2trans}, there is $g\in \GG$ with $gL=L_1$. By part one of Lemma \ref{lem:setofc}, we know that $gK\subset L_2\oplus \ol{L_3}$. Then by part two of Lemma \ref{lem:realg2trans}, there exists $h\in \GG$ with $hL_1=L_1$ and $hgK=L_2$. We have shown that any complex coassociative plane can be mapped to $L_1\oplus L_2$.
\end{proof}

\begin{cor}
The intersection $C=(\mcc\oplus \ol{\mcc})\cap \R^7$ is a coassociative $4$-plane and $\mca$ is the complexification of an associative $3$-plane $A \subset \R^7$.
\end{cor}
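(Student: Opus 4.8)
The plan is to reduce everything to the model complex coassociative plane $\mcc_0=L_1\oplus L_2$ by means of the transitivity statement of Proposition \ref{prop:coasstrans}, and then to read off the conclusion from the convention fixed earlier. First I would record the linear-algebra facts that exhibit $C$ and $\mca$ as (complexifications of) real subspaces. Because $\mcc$ is isotropic, $\langle v,\ol{v}\rangle=\sum_i|v_i|^2>0$ for every nonzero $v\in\mcc$, so $\mcc\cap\ol{\mcc}=0$ and $\mcc\oplus\ol{\mcc}$ is a genuine $4$-dimensional, conjugation-invariant complex subspace of $\C^7$; hence it is the complexification of the real $4$-plane $C=(\mcc\oplus\ol{\mcc})\cap\R^7$. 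Since the bilinear form on $\C^7$ is the complex-linear extension of the real inner product, the orthogonal complement $\mca=(\mcc\oplus\ol{\mcc})^\perp$ is again conjugation-invariant, so $\mca$ is the complexification of the real $3$-plane $A=\mca\cap\R^7$, and a short check gives $A=C^\perp$ inside $\R^7$. Consequently the two assertions---that $C$ is coassociative and that $A$ is associative---are by definition the same statement, so it suffices to establish one of them.

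Next I would exploit $\GG$-equivariance. Every $g\in\GG$ is a real automorphism of $\Oc$, hence lies in $\SO(7)$, commutes with complex conjugation on $\C^7=\Im(\Oc)\otimes\C$, preserves the metric, and preserves octonionic multiplication. Therefore $g$ carries the whole construction $\mcc\mapsto(C,A)$ equivariantly: one has $g(\mcc\oplus\ol{\mcc})=g\mcc\oplus\ol{g\mcc}$, $g$ maps $\R^7$ to itself, and $g$ sends associative (resp.\ coassociative) planes to associative (resp.\ coassociative) planes, being an algebra automorphism. In particular whether $C$ is coassociative is unchanged if $\mcc$ is replaced by $g\mcc$.

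I would then invoke Proposition \ref{prop:coasstrans} to choose $g\in\GG$ with $g\mcc=\mcc_0=L_1\oplus L_2$. In this model one computes directly, using $L_1=\C\cdot\{e_1+ie_2\}$ and $L_2=\C\cdot\{e_3-ie_4\}$, that $\mcc_0\oplus\ol{\mcc_0}=\C e_1\oplus\C e_2\oplus\C e_3\oplus\C e_4$, so the associated real $4$-plane is $C_0=\R e_1\oplus\R e_2\oplus\R e_3\oplus\R e_4$. This is coassociative by the convention that $e_1\w e_2\w e_3\w e_4$ is coassociative; equivalently $A_0=C_0^\perp=\R e_5\oplus\R e_6\oplus\R e_7$ is associative, and $\mca_0=\C e_5\oplus\C e_6\oplus\C e_7$ is its complexification. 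Transporting back by $g^{-1}$ then shows that $C$ is coassociative and that $\mca$ is the complexification of the associative $3$-plane $A$.

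I do not expect a serious obstacle: the argument is essentially a reduction to a single model by transitivity. The only points requiring care are verifying that the real forms $C$ and $A$ have the expected dimensions---which follows from the isotropy of $\mcc$---and that the passage $\mcc\mapsto(C,A)$ genuinely commutes with the $\GG$-action, which rests on $\GG$ being a real, orthogonal, multiplication-preserving group. Once these are in place the convention supplies the base case and nothing further is needed.
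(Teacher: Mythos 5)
Your proof is correct and follows exactly the route the paper intends: the corollary is stated without proof immediately after Proposition \ref{prop:coasstrans} precisely because it is the reduction-by-transitivity argument you give, checking the statement on the model plane $L_1\oplus L_2$ where $\mcc_0\oplus\ol{\mcc_0}$ visibly complexifies the coassociative plane $e_1\w e_2\w e_3\w e_4$. Your added details (conjugation-invariance forcing $C$ and $A$ to be real forms of the right dimension, and $\GG$-equivariance of the construction) are exactly the points the paper leaves implicit.
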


\begin{cor}\label{cor:orth}
If $\mcc$ and $\mcd$ are complex coassociative and $\mcc \cap \mcd \neq 0$, then they are orthogonal.
\end{cor}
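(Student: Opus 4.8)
The plan is to reduce everything to the structural description of complex coassociative planes through a fixed isotropic line provided by Lemma~\ref{lem:setofc}. Since $\mcc$ and $\mcd$ meet nontrivially, their intersection contains a nonzero vector spanning an isotropic line $L$, isotropic because $L\subset\mcc$ and $\mcc$ is isotropic by definition. If $\mcc\cap\mcd$ is two-dimensional then $\mcc=\mcd$, and an isotropic plane is automatically orthogonal to itself, so that case is immediate; I would therefore focus on the case $\dim(\mcc\cap\mcd)=1$, i.e.\ $L=\mcc\cap\mcd$.

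First I would invoke part~(1) of Lemma~\ref{lem:setofc} applied to $L$: it furnishes a unique isotropic $2$-plane $\mcb\subset(L\oplus\bar L)^\perp$, annihilated by $L$ under octonionic multiplication, and asserts that every complex coassociative plane containing $L$ has the form $L\oplus K$ for some line $K\subset\mcb$. Applying this to both planes yields decompositions $\mcc=L\oplus K_1$ and $\mcd=L\oplus K_2$ with $K_1,K_2\subset\mcb$.

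Finally I would expand the pairing $\ip{\mcc}{\mcd}$ over these decompositions into the four blocks $\ip{L}{L}$, $\ip{L}{K_2}$, $\ip{K_1}{L}$, and $\ip{K_1}{K_2}$. The first vanishes since $L$ is isotropic; the two mixed terms vanish because $K_1,K_2\subset\mcb\subset(L\oplus\bar L)^\perp$ are orthogonal to $L$; and the last vanishes because $K_1$ and $K_2$ both lie in the isotropic plane $\mcb$. Hence $\mcc\perp\mcd$.

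I do not anticipate a genuine obstacle here: all the real work has been packaged into Lemma~\ref{lem:setofc}, and once the decompositions $\mcc=L\oplus K_1$, $\mcd=L\oplus K_2$ are in hand the orthogonality is pure bookkeeping. The only points requiring a moment's care are the dichotomy on $\dim(\mcc\cap\mcd)$, and the observation that it is the isotropy of $\mcb$ itself (not merely $\mcb\perp L$) that kills the cross term $\ip{K_1}{K_2}$ between the two second factors.
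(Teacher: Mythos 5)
Your proof is correct and is essentially the argument the paper leaves implicit (the corollary is stated there without proof): pass to a common isotropic line $L\subset\mcc\cap\mcd$ and invoke Lemma \ref{lem:setofc} to write $\mcc=L\oplus K_1$ and $\mcd=L\oplus K_2$ with $K_1,K_2\subset\mcb$, noting that orthogonality here means orthogonality for the complex bilinear form, which is exactly what your degenerate case $\mcc=\mcd$ and the corollary's later use (to conclude $\pi_{\ol{\mcc}}g_{-k}=0$) require. The only available streamlining is that, as the paper already observes in proving part (2) of Lemma \ref{lem:setofc}, $L\oplus\mcb$ is an isotropic $3$-plane containing both $\mcc$ and $\mcd$, which yields all four of your block vanishings in one stroke.
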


\begin{lem} 
The octonionic multiplication table for the decomposition \eqref{eq:decomp} is given by
\begin{center}
\begin{tabular}{c||c|c|c}
 & $\mcc$ & $\mca$ & $\ol{\mcc}$ \\ \hline\hline
$\mcc$ & $0$ & $\mcc$ & $\mca$ \\ \hline
$\mca$ & $\mcc$ & $\mca$ & $\ol{\mcc}$ \\ \hline
$\ol{\mcc}$ & $\mca$ & $\ol{\mcc}$ & $0$
\end{tabular}
\end{center}
\end{lem}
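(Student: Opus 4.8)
The plan is to exploit the transitivity established in Proposition \ref{prop:coasstrans} to collapse the problem to a single model plane, and then read the whole table off the weight data. The key structural point is that $\GG$ acts on $\Oc_\C$ by algebra automorphisms: for $g\in\GG$ and any complex coassociative $\mcc$, the plane $g\mcc$ is again complex coassociative, $\ol{g\mcc}=g\ol{\mcc}$ (since $g$ is real), and $g\mca=(g\mcc\oplus g\ol{\mcc})^\perp$ is its associated complex associative plane by \eqref{eq:decomp}. Because $g$ preserves the product $x\cdot y$, each of the nine block products transforms equivariantly, so the table holds for $\mcc$ if and only if it holds for $g\mcc$. By Proposition \ref{prop:coasstrans} every complex coassociative plane is $\GG$-equivalent to the model $\mcc=L_1\oplus L_2$, so it suffices to verify the table there.

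For this model I would fix $\ol{\mcc}=\ol{L_1}\oplus\ol{L_2}$ and, reading off the weight decomposition, $\mca=L_0\oplus L_3\oplus\ol{L_3}$. Then I would compute each block using the rule that $L_i\cdot L_j$ lies in the weight space of $\mu_i+\mu_j$ when this is a weight and vanishes otherwise. Using the values on $(H_1,H_2)$, namely $\mu_1\mapsto(-i,-i)$, $\mu_2\mapsto(-i,i)$, $\mu_3\mapsto(0,2i)$, and $\mu_0=0$, one checks for example that $\mu_1+\mu_2$ and the $2\mu_i$ are never weights (giving $\mcc\cdot\mcc=0$), that $\mu_1+\mu_3=\mu_2$ and $\mu_2-\mu_3=\mu_1$ (giving $\mcc\cdot\mca\subset\mcc$), and that $\mu_1-\mu_1$, $\mu_2-\mu_2$ land in $L_0$ while $\mu_1-\mu_2=-\mu_3$ and $\mu_2-\mu_1=\mu_3$ (giving $\mcc\cdot\ol{\mcc}\subset\mca$). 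The remaining blocks go the same way; moreover within each block the individual line products, e.g.\ $L_1\cdot\ol{L_1}=L_0$, $L_2\cdot\ol{L_1}=L_3$, $L_1\cdot\ol{L_2}=\ol{L_3}$, actually span the asserted subspace, so the inclusions are equalities.

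Two observations would streamline the bookkeeping. First, the imaginary octonionic product is anticommutative, $x\cdot y=-y\cdot x$ for $x,y\in\Im(\Oc_\C)$, so the lower triangle of the table is forced by the upper triangle and every product of a single line with itself vanishes automatically. Second, $\mca\cdot\mca\subset\mca$ can be obtained with no weight arithmetic at all: by the corollary following Proposition \ref{prop:coasstrans}, $\mca$ is the complexification of the associative $3$-plane $A$, whose span with $1$ is a quaternion subalgebra, so the imaginary part of a product of two elements of $\mca$ remains in $\mca$.

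The only place that requires genuine care is the reduction step, where I must confirm that $\GG$-equivariance transports the entire table (not merely the diagonal relation $\mca\cdot\mca\subset\mca$), together with the borderline application of the weight rule in the case $\mu_i+\mu_j=0$, which is a weight even though the diagonal product $L_i\cdot L_i$ vanishes. Both are resolved by the anticommutativity remark above combined with the explicit nonvanishing of the cross terms such as $L_1\cdot\ol{L_1}=L_0$, so no real obstacle remains; the substance of the lemma is already contained in the transitivity of the $\GG$-action.
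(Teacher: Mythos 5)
Your proposal is correct and follows essentially the same route as the paper: the paper's proof likewise invokes the transitivity of $\GG$ on complex coassociative planes (Proposition \ref{prop:coasstrans}) to reduce to the standard plane $L_1\oplus L_2$ and then verifies the table there, the weight-diagram bookkeeping and the appendix multiplication table being interchangeable for this check. Your additional remarks (equivariance of all nine blocks under the $\GG$-action, anticommutativity of the imaginary product, and the explicit cross terms giving equality rather than mere inclusion) only make explicit what the paper leaves to the reader.
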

\begin{proof}
Because $\GG$ acts transitively on complex coassociative planes, one only has to check this using the multiplication table from Appendix \ref{sec:g2} for the standard complex coassociative plane $L_1\oplus L_2$.
\end{proof}

\subsection{Generating $\rlgm(\GGC,\C^7)$} \label{sec:geng2}
As before, let $\GGC\subset \SO(7,\C)$ be the complexification of $\GG\subset \SO(7)$; conjugation on $\SO(7,\C)$ restricts to an involution $\tau$ on $\GGC$ whose fixed point set is $\GG$. First we need to describe the simple elements.

We use the notation of Section \ref{sec:g2basics}.  The element $iH_1$ is $\rho$-integral.  By Proposition \ref{prop:simple}, the simple element $\exp\left(\ln\left(\frac{\la-{\al}}{\la-\ol{\al}}\right)iH_1\right)$ acts on $\C^7$ as
\[
\left(\frac{\la-\ol{\al}}{\la-\al}\right)\pi_{\ol{\mcc_0}}+\pi_{\mca_0}+\left(\frac{\la-\al}{\la-\ol{\al}}\right) \pi_{\mcc_0},
\]
where $\mcc_0=L_1\oplus L_2$ and $\ol{\mcc_0}=\ol{L_1}\oplus \ol{L_2}$ are complex coassociative planes and ${\mca_0}=L_3\oplus L_0\oplus \ol{L_3}$ is a complex associative $3$-plane.  Conversely, given any element of the form
\[
p_{\al,\mcc}(\la):=\left(\frac{\la-\ol{\al}}{\la-{\al}}\right)\pi_{\ol{\mcc}}+\pi_{\mca}+\left(\frac{\la-{\al}}{\la-\ol{\al}}\right) \pi_{\mcc},
\]
where $\mcc$ is an arbitrary complex coassociative plane and $\mca=(\mcc\oplus \bar\mcc)^\perp$, Proposition \ref{prop:coasstrans} implies that it is conjugate to the simple element defined by $H_1$, so it is a simple element itself.

\begin{thm} The simple elements $p_{\alpha,\mcc}$ generate $\rlgm(\GGC,\C^7)$.
\end{thm}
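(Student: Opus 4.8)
The plan is to follow the Uhlenbeck-style induction on the total degree of the poles that was used for both $\CSp(n,\C)$ and $\SO(n,\C)$, but now adapted to the $\GGC$ linear algebra developed in Section \ref{sec:g2basics}. Let $g\in \rlgm(\GGC,\C^7)$ and fix a pole $\al\in\C\setminus\R$; as in Theorem \ref{thm:genso} the reality condition guarantees that $\bar\al$ is simultaneously a pole, and since $\GGC\subset\SO(7,\C)$ a single left-multiplication by a simple factor $p_{\al,\mcc}$ will attack both poles at once. Write the Laurent expansion
\[
g(\la)=\sum_{j=-k}^\infty\left(\frac{\la-\al}{\la-\ol\al}\right)^j g_j,\qquad g_{-k}\neq 0,
\]
and order the total degree of a pole by the pair $(k,\rank g_{-k})$ as before. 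The goal at each step is to produce a complex coassociative plane $\mcc$ so that $p_{\al,\mcc}\,g$ has strictly lower total degree at $\al$; once all poles are gone, Liouville's theorem forces $g$ to be the identity loop.

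First I would extract the algebraic constraints on the leading coefficients from the fact that $g$ preserves \emph{both} the symmetric bilinear form \emph{and} the octonionic product. The $\SO(7,\C)$-condition $\langle g(\la)v,g(\la)w\rangle=\langle v,w\rangle$ gives, exactly as in Theorem \ref{thm:genso}, that $\Im g_{-k}$ is isotropic together with the cross-term relation
\[
\langle g_{-k+1}v,g_{-k}w\rangle+\langle g_{-k}v,g_{-k+1}w\rangle=0.
\]
The genuinely new input is the automorphism condition $g(\la)(v\cdot w)=(g(\la)v)\cdot(g(\la)w)$: expanding this in powers of $\tfrac{\la-\al}{\la-\ol\al}$ and reading off the lowest-order term yields $(g_{-k}v)\cdot(g_{-k}w)=0$ for all $v,w$, i.e.\ $\Im g_{-k}\cdot\Im g_{-k}=0$. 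Combined with isotropy, this says precisely that $\Im g_{-k}$ is contained in a \emph{complex coassociative} subspace — this is where the definition of $\mcc$ and Lemma \ref{lem:setofc} and Proposition \ref{prop:coasstrans} earn their keep. The plan is to choose $\mcc$ to be (or to contain) a coassociative plane with $\Im g_{-k}\subset\mcc$, exploiting that $\ker\pi_{\ol\mcc}=\mcc^\perp\supset\Im g_{-k}^\perp\supset\Im g_{-k}$, so that the top pole coefficient of $p_{\al,\mcc}g$ drops.

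The rank-reduction argument should then mirror the $\SO(n,\C)$ computation: after multiplying by $p_{\al,\mcc}$ the coefficient of order $-k-1$ vanishes because $\pi_{\ol\mcc}g_{-k}=0$, and the next-order coefficient is $\pi_{\ol\mcc}g_{-k+1}+\pi_{\mca}g_{-k}$; using the cross-term relation I would locate a vector in $\ker\tilde g_{-k}\setminus\ker g_{-k}$, forcing $\rank\tilde g_{-k}<\rank g_{-k}$. \textbf{The main obstacle} I anticipate is precisely the choice of $\mcc$: unlike $\SO(n,\C)$, where any isotropic line through $\Im g_{-k}$ sufficed, here the simple factors are indexed by coassociative \emph{planes}, whose multiplicative structure is rigid (Lemma \ref{lem:setofc} shows the complex coassociative plane containing a given isotropic line is essentially unique). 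If $\dim\Im g_{-k}$ exceeds $2$, there may be no single coassociative plane containing it, so one likely needs either a dimension analysis of $\Im g_{-k}$ forced by the constraints $\Im g_{-k}\cdot\Im g_{-k}=0$ and isotropy, or a more delicate inductive choice peeling off one coassociative plane at a time. I expect the bulk of the real work — and the subtlety the authors flag in the introduction — to lie in verifying that a suitable $\mcc$ always exists and that the induction terminates, with the $\GGC$-equivariance of Proposition \ref{prop:coasstrans} providing the freedom to normalize to the standard $\mcc_0=L_1\oplus L_2$ in each case.
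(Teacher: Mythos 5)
Your skeleton --- induction on the total degree $(k,\rank g_{-k})$, using both the $\SO(7,\C)$-relations and the automorphism relation $g(\la)(v\cdot w)=g(\la)(v)\cdot g(\la)(w)$ on the leading Laurent coefficients --- is the same as the paper's, and your two lowest-order identities are correct. (Your anticipated ``main obstacle'' is actually a non-issue: the paper observes that $\langle g_{-k}v,g_{-k}w\rangle=0$ together with $g_{-k}(v)\cdot g_{-k}(w)=0$ forces $\Im g_{-k}$ to be an isotropic line or a complex coassociative plane, so $\rank g_{-k}\le 2$ always.) The decisive gap is the case $k=1$, which you never separate out: expanding the automorphism relation at the \emph{next-to-lowest} order gives $g_{-k}(v)\cdot g_{-k+1}(w)+g_{-k+1}(v)\cdot g_{-k}(w)=0$ only when $k\ge 2$; when $k=1$ the right-hand side is $g_{-1}(v\cdot w)$, not zero (the paper's equation \eqref{eqn:g22}). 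This extra term destroys precisely the kernel-inclusion step you plan to ``mirror'' from $\SO(n,\C)$: for $w\in\ker g_{-1}$ you can no longer conclude $g_{-1}(v)\cdot g_{0}(w)=0$. The paper must treat simple poles by a completely different argument: \eqref{eqn:g22} with $k=1$ shows $\ker g_{-1}$ is multiplicatively closed, hence at most five-dimensional, so $\rank g_{-1}=2$; one then factors $g=p_{\al,L}p_{\al,K}\tilde g$ via the $\SO(7,\C)$ Theorem \ref{thm:genso} and --- since $p_{\al,L}p_{\al,K}$ is in general \emph{not} of the form $p_{\al,\mcc}$ --- repairs this factorization by inserting $p_{\al,\ol R}p_{\al,R}=\Id$ for an auxiliary isotropic line $R$ read off from the weight diagram, so that $p_{\al,L}p_{\al,\ol R}$ and $p_{\al,R}p_{\al,K}$ are genuine $\GGC$-factors (cf.\ Proposition \ref{prop:simplepolesg2}). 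Nothing in your outline produces this step, and without it the induction cannot terminate.

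The second gap is your choice of $\mcc$. Taking $\mcc\supseteq\Im g_{-k}$ does kill the coefficient of order $-(k+1)$, but it does not yield the \emph{strict} drop in total degree: when $\Im g_{-k}$ is itself coassociative you are forced to take $\mcc=\Im g_{-k}$, whence $\tilde g_{-k}=\pi_{\mccb}\,g_{-k+1}$, and relation \eqref{eqn:so2} with $v=w$ gives only $g_{-k+1}(v)\perp g_{-k}(v)$, not $g_{-k+1}(v)\perp\mcc$; your outline then provides no vector in $\ker\tilde g_{-k}\setminus\ker g_{-k}$, so the total degree need not decrease and the process could cycle. Note also that you misread Lemma \ref{lem:setofc}: the coassociative planes containing a given isotropic line $L$ are \emph{not} essentially unique --- they form a one-parameter family $L\oplus M$, $M$ a line in the two-plane $\mcb$ --- and this freedom is exactly what the paper exploits. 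It fixes only a line $L\subset\Im g_{-k}$ with $0\neq g_{-k}(v)\in L$ and chooses $M\subset\mcb$ with $g_{-k+1}(v)\perp M$ (one linear condition on a two-dimensional space); orthogonality of $\mcc=L\oplus M$ to $\Im g_{-k}$ then follows from Corollary \ref{cor:orth}, and $g_{-k+1}(v)\perp L$ from \eqref{eqn:so2}, giving $\tilde g_{-k}(v)=0$ and the strict rank drop. In short: right skeleton, but the two mechanisms that make the induction actually run --- the auxiliary-line choice for $k\ge 2$ and the separate $\SO(7,\C)$-based treatment of $k=1$ --- are both missing.
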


\begin{proof} Let $\alpha$ be a pole of $g$ and expand $g$ as before. Because $g$ is a map into $\SO(7,\C)$, Equations \eqref{eq:gorthcond1} and \eqref{eq:gorthcond2} may be used here:
\begin{align}
\langle g_{-k} (v),g_{-k}(w)\rangle&=0 \label{eqn:so1}\\
\langle g_{-k} (v),g_{-k+1}(w)\rangle +\langle g_{-k+1} (v),g_{-k}(w)\rangle&=0.\label{eqn:so2}
\end{align} 
For all $v,w\in \C^7$, we have $g(\lambda)(v\cdot w)=g(\lambda)(v)\cdot g(\la)(w)$; expanding this equation and comparing the two coefficients of lowest order, we obtain
\begin{align}
 g_{-k} (v)\cdot g_{-k}(w)&=0\label{eqn:g21}\\
 g_{-k} (v)\cdot g_{-k+1}(w) + g_{-k+1} (v)\cdot g_{-k}(w) &=\begin{cases}0 & k\ge 2 \\ g_{-1}(v \cdot w ) & k=1\end{cases}.\label{eqn:g22}
\end{align} 

It follows from Equations \eqref{eqn:so1} and \eqref{eqn:g21} that $\Im(g_{-k})$ is either an isotropic line or a complex coassociative plane, thus  $\rank(g_{-k}) \leq 2$.  

Assume $k \geq 2$; the case of $k=1$ will be dealt with later. Choose $L \subset \Im(g_{-k})$ and $v$ such that $0 \neq g_{-k}(v) \in L$.  By Lemma \ref{lem:setofc} there exists a line $M \subset (L \oplus \bar{L})^{\perp}$ such that $\mcc=L \oplus M$ is complex coassociative and $g_{-k+1}(v) \perp M$. Note that $\mcc$ is perpendicular to $\Im(g_{-k})$, since $\Im(g_{-k})$ is either equal to $L\subset \mcc$ or a complex coassociative plane intersecting $\mcc$ nontrivially, whereupon Corollary \ref{cor:orth} is applicable.

We claim that $p_{\al,\mcc} g$ has a pole of strictly lower total degree at zero. Multiplying out, we get
\begin{align*}
p_{\al,\mcc}(\la) g(\lambda)&=\left(\frac{\la-\al}{\la-\ol{\al}}\right)^{-k-1}\pi_{\mccb}g_{-k}+\left(\frac{\la-\al}{\la-\ol{\al}}\right)^{-k}(\pi_{\mccb}g_{-k+1}+\pi_{\mca} g_{-k})+\ldots\\
&=:\sum_{j=-k-1}^\infty \left(\frac{\la-\al}{\la-\ol{\al}}\right)^j \tilde g_j.
\end{align*}
Since $\mcc$ and $\Im(g_{-k})$ are orthogonal, $\tilde g_{-k-1}=\pi_{\mccb}g_{-k}=0$. We need to show that the rank of $\tilde g_{-k}$ is smaller than the rank of $g_{-k}$. For this, we compare the kernels. 

If $w \in \ker(g_{-k})$ then Equation \eqref{eqn:g22} gives 
\be
g_{-k} (v)\cdot g_{-k+1}(w) =0
\ee
which implies that $g_{-k+1}(w)\cdot L=0$, i.e.~$g_{-k+1}(w)$ is either contained in $L$, which immediately implies that it is perpendicular to $\mcc$, or, together with $L$ spans a complex coassociative plane, whereupon it is perpendicular to $\mcc$ by Corollary \ref{cor:orth}. It follows that $\tilde g_{-k}(w)=0$, i.e.~$\ker g_{-k}\subset \ker \tilde g_{-k}$. 

To show that we have a strict inclusion, we show that $\tilde g_{-k}(v)=0$. We have $\pi_{\mca} g_{-k}(v)=0$, so we need to show that $g_{-k+1}(v)$ is perpendicular to $\mcc$.
By definition, $g_{-k+1}(v)$ is perpendicular to $M$.  Equation \eqref{eqn:so2} with $v=w$ implies that it is perpendicular to $L$, so it is perpendicular to $\mcc$.
 

Now assume $k=1$.  From Equation \eqref{eqn:g22} we find that $\ker(g_{-1})$ is a multiplicatively closed subspace of $\C^7$. Since such subspaces can at most be five-dimensional, it follows that $\rank(g_{-1})=2$. The loop $g$ may also be regarded as a loop in $\SO(7,\C)$, so applying the Generating Theorem \ref{thm:genso} for $\rlgm(\SO(7,\C),\C^7)$, we see that there exist two isotropic lines $L,K\subset \C^7$ such that
\be \label{eqn:decompk=1}
g=p_{\al,L}p_{\al,K}\tilde g,
\ee
where $\tilde{g}$ is holomorphic at $\al$, and $p_L$ and $p_K$ are given by \eqref{eq:simpleso}. Multiplying out \eqref{eqn:decompk=1}, the coefficient of order $-2$ is $\pi_{\bar L} \pi_{\bar K} \tilde g(\al)$. Because $\tilde{g}$ has a pole of order $1$ at $\al$ ($k=1$) and $\tilde g(\al)$ is invertible, it follows that $K\subset {\bar L} ^\perp$.
The coefficient of order $-1$ is
\be
(\pi_L\pi_{(K\oplus \bar K)^\perp} + \pi_{(L\oplus \bar L)^\perp} \pi_K) \tilde g(\al),
\ee
which has the two-dimensional image $\mcc:=L\oplus \pi_{(L\oplus \bar L)^\perp} K$. The plane $\mcc$ is complex coassociative by \eqref{eqn:g21}. By Proposition \ref{prop:coasstrans}, without loss of generality we can assume that $L=L_1$ and $\pi_{(L\oplus \bar L)^\perp} K=L_2$ in the weight diagram. We have that $K\subset \pi_{(L\oplus \bar L)^\perp} K \oplus \bar L=L_2\oplus \ol{L_1}$. Note that $p_{\al,L} p_{\al,K}$ is not necessarily equal to $p_{\al,\mcc}$; this is only the case if $K$ is contained in $(L\oplus \bar{L})^\perp$, i.e.~$K=L_2$. Having a look at the weight diagram, we see that the isotropic line $R=L_3\subset \mca$ satisfies $L\cdot \ol{R}=0$ and $K\cdot R =0$ so that both $p_{\al,L}p_{\al,\ol{R}}$ and $p_{\al,R} p_{\al,K}$ are simple factors. Thus
\be
g=p_{\al,L}p_{\al,K}\tilde g=(p_{\al,L} p_{\al,\ol{R}}) (p_{\al,R} p_{\al,K}) \tilde g.
\ee
has been factored.
\end{proof}

The proof of the $k=1$ case of the theorem implies the following proposition, which will be used in the discussion of the dressing transformation and permutability.

\begin{prop}\label{prop:simplepolesg2}
A pair of isotropic lines $L,K$ satisfies
\be\label{eq:pairoflines}
\begin{split}
K &\subset {\ol{L}}^{\perp} {\text{ and}}\\ 
\mcc&=L \oplus \pi_{(L\oplus \ol{L})^{\perp}}K \text{ is complex coassociative}
\end{split}
\ee
if and only $p_{\al,L}p_{\al,K} \in \rlgm(\GGC,\C^7)$.  In this case the product only has simple poles at $\al$ and $\ol{\al}$.   In fact, any $g \in \rlgm(\GGC,\C^7)$  whose only poles are simple ones at $\al$ and $\ol{\al}$ with $\al \in \C \setminus \R$ can be factored as $g=p_{\al,L}p_{\al,K}$.  Furthermore, either
\begin{enumerate}
\item $K \subset (L\oplus \ol{L})^{\perp}$, in which case $\mcc=L \oplus K$ and 
\[ g=p_{\al,\mcc}\]  or
\item $K \not \subset (L\oplus \ol{L})^{\perp}$ and then there exists an isotropic line $R$ such that $\mcc_1=L \oplus R$ and $\mcc_2=\ol{R} \oplus  K$ are complex coassociative and
\[
g=p_{\alpha,\mcc_1}p_{\al,\mcc_2}.
\]
\end{enumerate}
\end{prop}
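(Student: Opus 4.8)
The plan is to prove the several assertions in order, leaning heavily on the structure already extracted in the proof of the $k=1$ case of the generating theorem. First I would establish the forward direction of the equivalence: assuming $p_{\al,L}p_{\al,K}\in\rlg_-^\tau(\GGC,\C^7)$, I would expand the product near $\al$ and read off the pole structure directly from the simple-element formula \eqref{eq:simpleso}. The coefficient of order $-2$ is $\pi_{\ol L}\pi_{\ol K}$, and for the product to lie in $\rlg_-^\tau(\GGC,\C^7)$ (in particular to be holomorphic away from $\al,\ol\al$ and to have no pole of order $2$ forced by the $\GGC$-condition) this must vanish, giving $K\subset\ol L^{\perp}$; this is exactly the reasoning appearing in the $k=1$ argument. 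For the complex coassociative condition on $\mcc=L\oplus\pi_{(L\oplus\ol L)^{\perp}}K$, I would invoke Equation \eqref{eqn:g21}, which says $\Im(g_{-1})$ multiplies with itself to zero, hence is complex coassociative by the definition and by Lemma \ref{lem:setofc}. The converse direction---that \eqref{eq:pairoflines} forces $p_{\al,L}p_{\al,K}$ to be a genuine $\GGC$-loop---I would handle by the explicit octonionic multiplication table in the Lemma preceding this subsection, checking that the relevant weight-space products vanish, or equivalently by exhibiting the product as a composite of two honest simple factors as in case (2) below.

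Next I would prove that the product has \emph{only} simple poles at $\al$ and $\ol\al$. The order-$(-2)$ coefficient vanishes by $K\subset\ol L^{\perp}$, so the pole at $\al$ is at most simple; the reality condition $\tau(g(\ol\la))=g(\la)$ then forces the same at $\ol\al$, and since $\GGC\subset\SO(7,\C)$ and the factors are holomorphic elsewhere, no other poles appear. For the factorization statement, I would start from an arbitrary $g\in\rlg_-^\tau(\GGC,\C^7)$ with only simple poles at $\al,\ol\al$: viewing $g$ as an $\SO(7,\C)$-loop and applying Theorem \ref{thm:genso}, the same computation as in the $k=1$ case yields $g=p_{\al,L}p_{\al,K}\tilde g$ with $\tilde g$ holomorphic at $\al$; because $g$ already has only simple poles, $\tilde g$ must in fact be holomorphic everywhere and normalized, hence equal to the identity by Liouville, giving $g=p_{\al,L}p_{\al,K}$.

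For the dichotomy, I would split on whether $K\subset(L\oplus\ol L)^{\perp}$. In case (1), $\pi_{(L\oplus\ol L)^{\perp}}K=K$, so $\mcc=L\oplus K$ and the product collapses to the single simple element $p_{\al,\mcc}$---this is checked by comparing the two expansions directly, exactly as in the theorem's proof where equality holds iff $K=L_2$ in the model. In case (2), $K\not\subset(L\oplus\ol L)^{\perp}$, and here I would reproduce the normalization from the $k=1$ argument: by Proposition \ref{prop:coasstrans} assume $L=L_1$ and $\pi_{(L\oplus\ol L)^{\perp}}K=L_2$, so $K\subset L_2\oplus\ol{L_1}$, and take $R=L_3$; the weight-diagram relations $L\cdot\ol R=0$ and $K\cdot R=0$ (together with Lemma \ref{lem:setofc}) guarantee that $\mcc_1=L\oplus R$ and $\mcc_2=\ol R\oplus K$ are complex coassociative, whence $p_{\al,\mcc_1}=p_{\al,L}p_{\al,\ol R}$ and $p_{\al,\mcc_2}=p_{\al,R}p_{\al,K}$ are simple elements and their product equals $g$.

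The main obstacle I anticipate is the converse half of the equivalence and, relatedly, verifying case (2): one must show that the bare algebraic conditions \eqref{eq:pairoflines} are \emph{sufficient} for $p_{\al,L}p_{\al,K}$ to preserve the octonionic multiplication (not merely the quadratic form), and that the auxiliary line $R$ with the required multiplicative orthogonality always exists. Both reduce to weight-space bookkeeping against the multiplication table of the preceding Lemma, but the insertion of $R$ to split an ``impure'' product into two genuine simple factors is the delicate point, since $p_{\al,L}p_{\al,K}$ itself need not equal $p_{\al,\mcc}$ when $K\not\subset(L\oplus\ol L)^{\perp}$. I expect the transitivity of $\GG$ on complex coassociative planes (Proposition \ref{prop:coasstrans}) to be what makes the normalization to the model configuration legitimate and the argument uniform.
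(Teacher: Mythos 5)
Most of your proposal reproduces the paper's intended derivation, which is simply to re-run the $k=1$ case of the generating theorem: the factorization of a loop with only simple poles at $\al,\ol\al$ via Theorem \ref{thm:genso} followed by Liouville, the dichotomy obtained by normalizing $(L,\pi_{(L\oplus\ol L)^\perp}K)$ to $(L_1,L_2)$ using Proposition \ref{prop:coasstrans} and inserting the auxiliary line $R=L_3$, and the ``if'' direction by writing $p_{\al,L}p_{\al,K}$ as a product of two honest simple elements. Those parts are correct and essentially identical to the paper.

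The genuine gap is in your ``only if'' direction. You assert that the order $-2$ coefficient $\pi_{\ol L}\pi_{\ol K}$ must vanish because membership in $\rlgm(\GGC,\C^7)$ forbids ``a pole of order $2$ forced by the $\GGC$-condition.'' That principle is false: $\GGC$-rational loops can perfectly well have double poles --- for instance $p_{\al,\mcc}^2$, or $p_{\al,\mcc_1}p_{\al,\mcc_2}$ for generic coassociative planes $\mcc_1,\mcc_2$, lie in $\rlgm(\GGC,\C^7)$ and have poles of order two at $\al$. Nor do the constraints displayed in the paper rule out a double pole for your particular product: if $\pi_{\ol L}\pi_{\ol K}\neq 0$, the leading coefficient has rank one with image the isotropic line $\ol L$, and a line automatically satisfies \eqref{eqn:so1} and \eqref{eqn:g21} since $x\cdot x=0$ for every $x$; a short computation shows that even \eqref{eqn:g22} holds identically for the coefficients of $p_{\al,L}p_{\al,K}$, the two terms cancelling by antisymmetry of the cross product. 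Note also that the $k=1$ argument cannot be quoted here: in that argument the simple-pole hypothesis is part of the data (it \emph{is} the case $k=1$), and the lines $L,K$ are produced by the $\SO(7,\C)$ theorem, so $\pi_{\ol L}\pi_{\ol K}=0$ is deduced from the assumed absence of a double pole rather than from the $\GGC$-condition --- using it in your direction would be circular. What is actually needed is a proof that a loop of the special form $p_{\al,L}p_{\al,K}$ (a product of exactly two $\SO(7)$-simple factors) lying in $\rlgm(\GGC,\C^7)$ cannot have a pole of order two; this requires the higher-order coefficients of the identity $g(\la)(v\cdot w)=g(\la)v\cdot g(\la)w$ (the orders involving $g_0$), or some other structural input, none of which appears in your proposal. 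The same caveat affects your appeal to \eqref{eqn:g21} for the coassociativity of $\mcc$: that step is valid only after the pole is known to be simple and the residue known to have rank two.
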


\subsection{Dressing and permutability}
Any rational loop whose only poles are simple ones at $\al$ and $\ol{\al}$ with $\al\in \C\setminus \R$ is, by Proposition \ref{prop:simplepolesg2}, of the form $p_{\al,L,K}=p_{\al,L}p_{\al,K}$ for two isotropic lines $L$ and $K$ for which \eqref{eq:pairoflines} holds.
The notation $p_{\al,L,K}$ will only be used for lines $L$ and $K$ satisfying these conditions.   Dressing of positive loops with $p_{\al,L,K}$ is performed by using the dressing action in $\SO(7,\C)$ for each of the $\SO(7,\C)$-factors.  In order to prove a permutability formula we also need to use a dressing-type action of the rational loop group on itself.

 Let $h \in \rlgm(\GGC,\C^7)$ be holomorphic at $\al$ \us{(}and thus $\ol{\al}$\us{)}. Define 
\begin{align}
K'&=h({\al})^{-1}{K}\\
p_{\al,K}*h&=p_{\al,K}hp_{\al,K'}^{-1}.\label{eq:supK}
\end{align}
From the dressing theorem for $\SO(n,\C)$ we know that $p_{\al,K}*h$ is holomorphic at $\al$, allowing us to define
\be
L'=(p_{\al,K}*h)({\al})^{-1}{L}.
\ee
\begin{prop}
The pair of lines $L', K'$ satisfies \eqref{eq:pairoflines}. Therefore, $p_{\alpha,L',K'}\in \rlgm(\GGC,\C^7)$ has simple poles at $\alpha$ and $\ol{\al}$. Furthermore, 
\be
p_{\al,L,K}hp_{\al,L',K'}^{-1}\label{eq:nopole}
\ee
is in $\rlgm(\GGC,\C^7)$ and holomorphic at $\al$ and $\bar{\al}$. 
\end{prop}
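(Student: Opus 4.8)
The plan is to show that the loop in \eqref{eq:nopole} is holomorphic at $\al$ (and hence, by reality, at $\ol{\al}$) by verifying that its principal part at $\al$ vanishes, and then to argue that it is $\GGC$-valued. The key structural observation is that \eqref{eq:nopole} factors as a product of four $\SO(7,\C)$ simple elements together with the $\SO(7,\C)$-holomorphic loop $h$, so I can apply the $\SO(7,\C)$ dressing theorem (Theorem \ref{thm:dressingso}) twice, peeling off one line at a time. First I would rewrite $p_{\al,L,K}hp_{\al,L',K'}^{-1}=p_{\al,L}\bigl(p_{\al,K}\,h\,p_{\al,K'}^{-1}\bigr)p_{\al,L'}^{-1}$. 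The inner factor $p_{\al,K}*h=p_{\al,K}hp_{\al,K'}^{-1}$ is holomorphic at $\al$ by the $\SO(n,\C)$ dressing theorem applied to the $\SO(7,\C)$-holomorphic loop $h$ with $K'=h(\al)^{-1}K$. Then, having defined $L'=(p_{\al,K}*h)(\al)^{-1}L$, a second application of Theorem \ref{thm:dressingso} (now to the $\SO(7,\C)$-holomorphic loop $p_{\al,K}*h$) shows that $p_{\al,L}(p_{\al,K}*h)p_{\al,L'}^{-1}$ is holomorphic at $\al$. This is exactly \eqref{eq:nopole}, so its holomorphicity at $\al$ follows; reality then gives holomorphicity at $\ol{\al}$.

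The genuinely new content, beyond the $\SO(7,\C)$ bookkeeping, is that the resulting holomorphic loop actually lands in $\GGC$ and that $L',K'$ satisfy the structural constraint \eqref{eq:pairoflines} so that $p_{\al,L',K'}$ is itself a legitimate $\GGC$ loop. For the latter I would exploit that the $\GG$-action on $\C^7$ commutes with octonionic multiplication: since $h\in\rlgm(\GGC,\C^7)$, the value $h(\al)\in\GGC$ is an algebra automorphism, and likewise $(p_{\al,K}*h)(\al)\in\GGC$ (being a product of $\GGC$-valued loops evaluated at $\al$). Because these evaluation maps are algebra automorphisms, they send complex coassociative planes to complex coassociative planes and preserve the multiplicative relation $L\cdot\ol{\mcc}=0$ that encodes \eqref{eq:pairoflines}. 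Concretely, applying $h(\al)^{-1}$ to the pair $(L,K)$ satisfying \eqref{eq:pairoflines} yields a pair $(h(\al)^{-1}L,K')$ still satisfying \eqref{eq:pairoflines} by $\GGC$-equivariance of the multiplication; tracking $L$ through the second automorphism $(p_{\al,K}*h)(\al)^{-1}$ gives the pair $(L',K')$, and I would check that the defining conditions $K'\subset\ol{L'}^{\perp}$ and that $L'\oplus\pi_{(L'\oplus\ol{L'})^{\perp}}K'$ is complex coassociative are preserved at each automorphism step. By Proposition \ref{prop:simplepolesg2} this certifies $p_{\al,L',K'}\in\rlgm(\GGC,\C^7)$ with only simple poles at $\al,\ol{\al}$.

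Finally, for the membership of \eqref{eq:nopole} in $\rlgm(\GGC,\C^7)$, I would note that $p_{\al,L,K}$, $h$, and $p_{\al,L',K'}^{-1}$ are all in $\rlgm(\GGC,\C^7)$ and that this is a group, so the product is automatically a $\GGC$-valued rational loop; the only issue was the potential pole at $\al$ and $\ol{\al}$, which the dressing argument has removed. The main obstacle I anticipate is not the holomorphicity—that is a direct two-step application of the known $\SO(7,\C)$ theorem—but rather verifying cleanly that the complex-coassociativity condition \eqref{eq:pairoflines} is genuinely preserved when one transports $L$ and $K$ by the automorphisms $h(\al)^{-1}$ and $(p_{\al,K}*h)(\al)^{-1}$. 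The subtlety is that $K'=h(\al)^{-1}K$ is defined using the \emph{original} $h$, whereas $L'$ is defined using the \emph{already-dressed} loop $p_{\al,K}*h$, so one must confirm that both transported lines are compatible with the \emph{same} complex coassociative structure; the $\GGC$-equivariance of octonionic multiplication is precisely what guarantees this compatibility, but the order of operations requires care.
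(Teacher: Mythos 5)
Your holomorphicity argument is exactly the paper's: two applications of Theorem \ref{thm:dressingso}, first to $h$ with the line $K$, then to $p_{\al,K}*h$ with the line $L$, followed by reality to get holomorphicity at $\ol{\al}$. That part is fine. The gap is in your argument that $(L',K')$ satisfies \eqref{eq:pairoflines}. You assert that $(p_{\al,K}*h)(\al)\in\GGC$ ``being a product of $\GGC$-valued loops evaluated at $\al$'', but the factors $p_{\al,K}$ and $p_{\al,K'}^{-1}$ are \emph{not} $\GGC$-valued: a simple factor built from a single isotropic line is merely a loop in $\SO(7,\C)$, and Proposition \ref{prop:simplepolesg2} says precisely that only suitably matched products of \emph{two} such factors lie in $\rlgm(\GGC,\C^7)$. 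Consequently $p_{\al,K}*h=p_{\al,K}hp_{\al,K'}^{-1}$ is only known to be an $\SO(7,\C)$-loop, its value at $\al$ need not be an automorphism of $\Oc_\C$, and the equivariance of octonionic multiplication cannot be invoked to transport \eqref{eq:pairoflines} from $(L,K)$ to $(L',K')$. The first transport, by $h(\al)^{-1}\in\GGC$, is legitimate; the second, by $(p_{\al,K}*h)(\al)^{-1}$, is not. This is exactly the point you flagged as requiring ``care,'' but it is not an order-of-operations subtlety: it is the place where the tool you propose does not exist.

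The paper avoids the direct verification entirely and obtains \eqref{eq:pairoflines} for $(L',K')$ as a \emph{consequence} of $\GGC$-membership rather than as a prerequisite for it. By the generating theorem one factors $p_{\al,L,K}h=\hat{h}\,p_{\al,M,N}$, where $\hat{h}$ is holomorphic at $\al$ and $\ol{\al}$ and $p_{\al,M,N}\in\rlgm(\GGC,\C^7)$; the factor carrying the poles must be of this two-line form because the left-hand side has only simple poles at $\al$ and $\ol{\al}$ (Proposition \ref{prop:simplepolesg2}). Then
\[
p_{\al,L,K}hp_{\al,L',K'}^{-1}\hat{h}^{-1}=\hat{h}\,p_{\al,M,N}\,p_{\al,L',K'}^{-1}\hat{h}^{-1}
\]
is holomorphic at $\al$ by the dressing argument, so $p_{\al,M,N}p_{\al,L',K'}^{-1}$ has no poles anywhere on $\cp{1}$; by Liouville's theorem and the normalization at $\infty$ it is the identity, whence $p_{\al,L',K'}=p_{\al,M,N}\in\rlgm(\GGC,\C^7)$, and \eqref{eq:pairoflines} for $(L',K')$ then follows from the ``only if'' direction of Proposition \ref{prop:simplepolesg2}. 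Your final step (membership of \eqref{eq:nopole} in the group $\rlgm(\GGC,\C^7)$ once $p_{\al,L',K'}$ is known to lie there) is correct, but it rests on the part of the argument that is broken; if you want to salvage your direct approach, you would essentially have to prove by hand that the $\SO(7,\C)$-element $(p_{\al,K}*h)(\al)$ carries $L$ to a line compatible with $K'$, which is as hard as the proposition itself.
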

\begin{proof}
That the resulting element is holomorphic follows from applying Theorem \ref{thm:dressingso} twice. We will now show that it is a loop in $\GGC$.  By the generating theorem for $\rlgm(\GGC,\C^7)$, 
\[
p_{\al,L,K}h=\hat{h}p_{\al,M,N}
\]
where $\hat{h}$ is holomorphic at $\al$ and $\ol{\al}$ and $p_{\al,M,N}$ is a loop in $\GGC$.  That we do not have a more complicated product of simple elements having poles at $\al$ and $\ol{\al}$ is due to the fact that the left hand side has only simple poles there.  Therefore 
\[
p_{\al,L,K}hp_{\al,L',K'}^{-1}\hat{h}^{-1}=\hat{h}p_{\al,M,N}p_{\al,L',K'}^{-1}\hat{h}^{-1}.
\]
The left hand side is holomorphic at $\al$, so $p_{\al,M,N}p_{\al,L',K'}^{-1}$ is constant. Since all factors were normalized to be the identity at infinity, $p_{\al,L',K'}=p_{\al,M,N}$, and is thus in $\rlgm(\GGC,\C^7)$. 
\end{proof}

\begin{cor}[Permutability]\label{cor:g2perm}
Let $L,K$ and $M,N$ be pairs of lines satisfying \eqref{eq:pairoflines}. Then, for all $\al ,\beta \in \C \setminus \R$ with $\alpha \neq \beta, \ol{\beta}$, we have 
\[
p_{\al,L',K'}p_{\beta,M,N}=p_{\beta,M',N'}p_{\al,L,K}
\]
where
\begin{align*}
K'&=p_{\beta,M,N}({\al}){K}\\
L'&=(p_{\al,K'}*p_{\beta,M,N})({\al}){L}\\
N'&=p_{\al,L,K}(\beta){N}\\
M'&=(p_{\beta,N'}*p_{\al,L,K})(\beta){M}.
\end{align*}
\end{cor}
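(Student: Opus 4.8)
The plan is to derive the identity from the dressing-type Proposition that immediately precedes this corollary, finished off by a Liouville argument. The key observation is that the four auxiliary lines $L',K',M',N'$ are defined precisely so as to \emph{invert} the dressing map of that Proposition: whereas the Proposition passes from a pair of lines and a loop $h$ (holomorphic at $\al$) to a new pair using $h(\al)^{-1}$, the corollary's formulae use $h(\al)$. I would exploit this by feeding the \emph{inverse} loop into the Proposition. First I would record the genericity consequences: since $\al\neq\beta,\bar\beta$ (hence also $\bar\al\neq\beta,\bar\beta$ by conjugation), the loop $p_{\beta,M,N}$, whose only poles are the simple ones at $\beta,\bar\beta$, is holomorphic at both $\al$ and $\bar\al$; symmetrically $p_{\al,L,K}$ is holomorphic at $\beta,\bar\beta$. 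All four loops and their inverses lie in $\rlgm(\GGC,\C^7)$ and are normalized at infinity.

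Next I would apply the Proposition with $h=p_{\beta,M,N}^{-1}$ and the pair $(L,K)$. Using that the value of an inverse loop at a regular point is the inverse of the value, one checks that the pair produced by the Proposition is exactly $(L',K')$: indeed $(p_{\beta,M,N}^{-1})(\al)^{-1}K=p_{\beta,M,N}(\al)K=K'$, and the $\SO(7,\C)$-dressing sub-identity
\[
(p_{\al,K}*p_{\beta,M,N}^{-1})(\al)^{-1}=(p_{\al,K'}*p_{\beta,M,N})(\al)
\]
yields the stated $L'$. Thus the Proposition simultaneously certifies that $(L',K')$ satisfies \eqref{eq:pairoflines}, so that $p_{\al,L',K'}$ is a genuine simple-pole loop in $\rlgm(\GGC,\C^7)$, and that $p_{\al,L,K}\,p_{\beta,M,N}^{-1}\,p_{\al,L',K'}^{-1}$ is holomorphic at $\al,\bar\al$; inverting, $p_{\al,L',K'}\,p_{\beta,M,N}\,p_{\al,L,K}^{-1}$ is holomorphic at $\al,\bar\al$. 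Running the identical argument with the roles of $\al$ and $\beta$ interchanged, applied to $h=p_{\al,L,K}^{-1}$ and $(M,N)$, shows that $(M',N')$ satisfies \eqref{eq:pairoflines} and that $p_{\beta,M',N'}\,p_{\al,L,K}\,p_{\beta,M,N}^{-1}$ is holomorphic at $\beta,\bar\beta$.

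I would then close with Liouville's theorem. Set
\[
\Phi:=p_{\al,L',K'}\,p_{\beta,M,N}\,p_{\al,L,K}^{-1}\,p_{\beta,M',N'}^{-1}.
\]
Its only candidate poles are at $\al,\bar\al,\beta,\bar\beta$. At $\al,\bar\al$ the first three factors are holomorphic by the first application, while $p_{\beta,M',N'}^{-1}$ is holomorphic there since its poles lie at $\beta,\bar\beta$. At $\beta,\bar\beta$ the factor $p_{\al,L',K'}$ is holomorphic, and $p_{\beta,M,N}\,p_{\al,L,K}^{-1}\,p_{\beta,M',N'}^{-1}$ is the inverse of the loop shown holomorphic there in the second application, hence itself holomorphic (each factor is $\GGC$-valued, so invertible away from its poles). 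Therefore $\Phi$ is holomorphic on all of $\cp{1}$ with $\Phi(\infty)=\Id$, so $\Phi\equiv\Id$ by Liouville, which is exactly the asserted permutability identity.

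The only genuinely delicate point is the bookkeeping in the second paragraph: verifying that the corollary's formulae, which feature $p_{\beta,M,N}(\al)$ rather than its inverse, are reproduced by running the Proposition on $p_{\beta,M,N}^{-1}$, and in particular confirming the $\SO(7,\C)$-dressing sub-identity relating $(p_{\al,K}*h^{-1})(\al)^{-1}$ to $(p_{\al,K'}*h)(\al)$. Once this is in hand, both the validity of the defining conditions \eqref{eq:pairoflines} for $(L',K')$ and $(M',N')$ and the required holomorphy come for free from the Proposition, and the final Liouville step is routine. I expect this sign/inverse matching to be the main obstacle, everything else being a direct consequence of the preceding dressing result.
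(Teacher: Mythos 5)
Your proposal is correct and follows exactly the route the paper intends but leaves implicit: the corollary is stated without proof as a consequence of the preceding dressing proposition (compare the $\CSp(n,\C)$ and $\SO(n,\C)$ sections, where the paper says the analogous permutability corollaries follow ``in the standard way''), and your argument---apply that proposition to $h=p_{\beta,M,N}^{-1}$ with the pair $(L,K)$ and to $h=p_{\al,L,K}^{-1}$ with $(M,N)$, then finish with Liouville---is precisely that standard argument. Your inverse bookkeeping, in particular the sub-identity $(p_{\al,K}*p_{\beta,M,N}^{-1})(\al)^{-1}=(p_{\al,K'}*p_{\beta,M,N})(\al)$ (which holds because the two starred loops are mutually inverse and both holomorphic at $\al$), correctly reconciles the proposition's use of $h(\al)^{-1}$ with the corollary's use of $p_{\beta,M,N}(\al)$, and it also supplies the needed certification that $(L',K')$ and $(M',N')$ satisfy \eqref{eq:pairoflines}.
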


\subsection{Generators for the twisted loop group}\label{sec:twistedg2}
There is a unique irreducible compact Riemannian symmetric space whose isometry group is $\GG$.  It is defined by the automorphism $\sigma(g)=sgs^{-1}$, where \[s=\bp -I_4 & 0 \\ 0 & I_3 \ep\in \GG,\]
and $\GG^\sigma=\SO(4)$.  In this section we introduce generators for the twisted loop group $\mcl_{-}^{\tau,\sigma}(\GGC,\C^7)$.

\begin{lem} For $p_{\al,L,K}$ as above, the loop 
\[
q_{\al,L,K}=p_{-\al,(sL)',(sK)'}p_{\al,L,K}
\] satisfies the twisting condition $\sigma(q_{\al,L,K}(-\la))=q_{\al,L,K}(\la)$, where
\begin{align*}
(sK)'&=p_{\al,L,K}(-\al)sK\\
(sL)'&=(p_{-\al,(sK)'}*p_{\al,L,K})(-\al)sL.
\end{align*}
\end{lem}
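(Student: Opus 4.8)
The plan is to deduce the twisting of $q_{\al,L,K}$ directly from the permutability Corollary \ref{cor:g2perm}, just as the analogous statements for $\SO(n,\C)$ and $\CSp(n,\C)$ followed from their permutability propositions. The single computational input I would record first is the twisting behaviour of one $\SO(7,\C)$-simple factor. Since $s=\bp -I_4 & 0\\ 0 & I_3\ep\in\GG\subset\SO(7)$ is real and orthogonal, it commutes with complex conjugation and preserves isotropy, perp, and complex coassociativity; hence it carries the decomposition underlying \eqref{eq:simpleso} for $L$ to the one for $sL$, giving $s\,p_{\al,L}(\la)\,s^{-1}=p_{\al,sL}(\la)$. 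Combining this with the elementary identity $p_{\al,L}(-\la)=p_{-\al,L}(\la)$ and $s^2=\Id$ yields
\[
\sigma\bigl(p_{\al,L}(-\la)\bigr)=p_{-\al,sL}(\la),
\qquad\text{so that}\qquad
\sigma\bigl(p_{\al,L,K}(-\la)\bigr)=p_{-\al,sL,sK}(\la).
\]
As $s\in\GG$, the pair $(sL,sK)$ again satisfies \eqref{eq:pairoflines}, so the right-hand side lies in $\rlgm(\GGC,\C^7)$ with simple poles at $-\al,-\bar\al$; the standing assumption $\al\notin i\R$ gives $\al\neq-\al,-\bar\al$, which is exactly what makes all the dressing actions below (and the hypothesis $\al\neq\beta,\bar\beta$ of Corollary \ref{cor:g2perm}) legitimate.

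Next I would notice that the data defining $q_{\al,L,K}$ are precisely the right-hand output of Corollary \ref{cor:g2perm} taken with $\beta=-\al$ and $(M,N)=(sL,sK)$: that substitution produces $N'=p_{\al,L,K}(-\al)sK=(sK)'$ and $M'=(p_{-\al,(sK)'}*p_{\al,L,K})(-\al)sL=(sL)'$, so the corollary reads
\[
p_{\al,L',K'}\,p_{-\al,sL,sK}=p_{-\al,(sL)',(sK)'}\,p_{\al,L,K}=q_{\al,L,K},
\]
with $K'=p_{-\al,sL,sK}(\al)K$ and $L'=(p_{\al,K'}*p_{-\al,sL,sK})(\al)L$. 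This is the second expression for $q_{\al,L,K}$ I will match against.

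Finally I would apply $\sigma(\,\cdot\,(-\la))$ to the defining product and invoke the first step on each factor:
\[
\sigma\bigl(q_{\al,L,K}(-\la)\bigr)=\sigma\bigl(p_{-\al,(sL)',(sK)'}(-\la)\bigr)\,\sigma\bigl(p_{\al,L,K}(-\la)\bigr)=p_{\al,s(sL)',s(sK)'}(\la)\,p_{-\al,sL,sK}(\la).
\]
By the second step the proof then reduces to the two line-identities $s(sK)'=K'$ and $s(sL)'=L'$. The first is immediate from $p_{-\al,sL,sK}(\al)=s\,p_{\al,L,K}(-\al)\,s^{-1}$ and $s^2=\Id$. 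The identity $s(sL)'=L'$ is the genuinely delicate point and the step I expect to be the main obstacle: it says that conjugation by $s$ intertwines the self-dressing $p_{-\al,(sK)'}*p_{\al,L,K}$ with $p_{\al,K'}*p_{-\al,sL,sK}$, i.e.~$\sigma\bigl((p_{-\al,(sK)'}*p_{\al,L,K})(-\la)\bigr)=(p_{\al,K'}*p_{-\al,sL,sK})(\la)$. I would prove this by writing out $p_{-\al,(sK)'}*p_{\al,L,K}=p_{-\al,(sK)'}\,p_{\al,L,K}\,p_{-\al,(sK)''}^{-1}$ from \eqref{eq:supK}, applying the single-factor twisting to each of the three factors, and checking the auxiliary line-identity $s(sK)''=K''$ by the same $p_{-\al,sL,sK}(\al)=s\,p_{\al,L,K}(-\al)\,s^{-1}$, $s^2=\Id$ manipulation (holomorphicity throughout being guaranteed by Theorem \ref{thm:dressingso}); evaluating the resulting loop identity at $\la=\al$ then delivers $s(sL)'=L'$. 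With both line-identities in hand, $\sigma(q_{\al,L,K}(-\la))=p_{\al,L',K'}p_{-\al,sL,sK}=q_{\al,L,K}$, which is the twisting condition.
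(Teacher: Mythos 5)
Your proof is correct and follows essentially the same route as the paper's: both obtain the second factorization $q_{\al,L,K}=p_{\al,L',K'}\,p_{-\al,sL,sK}$ from Corollary \ref{cor:g2perm} with $\beta=-\al$ and $(M,N)=(sL,sK)$, apply $\sigma(\,\cdot\,(-\la))$ factor by factor, and conclude via the line identities $s(sK)'=K'$ and $s(sL)'=L'$. The only difference is that you supply explicit verifications of the single-factor twisting relation and of these two identities (the latter via intertwining the self-dressing actions), which the paper's proof merely asserts.
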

\begin{proof} Corollary \ref{cor:g2perm} yields 
\be\label{eq:g2perm}
q_{\al,L,K}=p_{-\al,(sL)',(sK)'}p_{\al,L,K}=p_{\al,L',K'}p_{-\al,sL,sK},
\ee
where 
\begin{align*}
K'&=p_{-\al,sL,sK}(\al)K\\
L'&=(p_{\al,K'}*p_{-\al,sL,sK}(\al)L.
\end{align*}
Note that 
\[
s(sK)'=K'\text{ and } s(sL)'=L'.
\]
The equations
\[
\sigma(p_{\al,L,K}(-\la))=p_{-\al,sL,sK}(\la)
\]
and
\[
\sigma(p_{-\al,(sL)',(sK)'}(-\la))=p_{\al,s(sL)',s(sK)'}(\la)=p_{\al,L',K'}(\la)
\]
combine to
\[
\sigma(q_{\al,L,K}(-\la))=(p_{\al,L',K'}p_{-\al,sL,sK})(\la).
\]
This, together with \eqref{eq:g2perm}, gives the desired result.
\end{proof}

\begin{thm} The elements $p_{\beta,M,N}$, with $\beta \in i\R$ and $sM=\ol{M}$ and $sN=\ol{N}$, and $q_{\al,L,K}=p_{-\al,(sL)',(sK)'}p_{\al,L,K}$ generate $\rlgmtw(\GGC,\C^7)$.
\end{thm}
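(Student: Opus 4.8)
The plan is to follow the pattern of the twisted generating theorems for $\SO(n,\C)$ and $\CSp(n,\C)$ proved above: I would treat the non-imaginary and the purely imaginary poles of a loop $g\in\rlgmtw(\GGC,\C^7)$ separately, induct on total degree at each pole, and finish with Liouville's theorem, which forces the pole-free, $\tau$- and $\sigma$-invariant loop normalized at $\infty$ to be the identity.

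For a non-imaginary pole $\alpha\in\C\setminus(\R\cup i\R)$, the reality and twisting conditions force $\overline\alpha$, $-\alpha$ and $-\overline\alpha$ to be poles of equal total degree. I would invoke the untwisted generating theorem for $\rlgm(\GGC,\C^7)$ to produce a pair of lines $L,K$ satisfying \eqref{eq:pairoflines} whose associated reducing factor $p_{\al,L,K}$ lowers the total degree of $g$ at $\alpha$. Since $p_{-\al,(sL)',(sK)'}$ is holomorphic and invertible at $\alpha$, the twisted completion $q_{\al,L,K}g$ then also has strictly smaller total degree at $\alpha$; being a product of $\tau$- and $\sigma$-invariant loops it again lies in $\rlgmtw(\GGC,\C^7)$, so the poles at $\overline\alpha,-\alpha,-\overline\alpha$ drop simultaneously. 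Iterating removes all non-imaginary poles, exactly as in the $\CSp(n,\C)$ and $\SO(n,\C)$ arguments.

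For a purely imaginary pole $\beta=ir$, I would expand $g$ in $\frac{\lambda-ir}{\lambda+ir}$ and combine the reality condition $\overline{g(\overline\lambda)}=g(\lambda)$ with the twisting condition $\sigma(g(-\lambda))=g(\lambda)$ precisely as in the $\SO(n,\C)$ case; this yields $\sigma g_j=\overline{g_j}$, i.e.\ $\overline{g_j}=sg_js^{-1}$, for every coefficient. Equivalently, each $g_j$ commutes with the antilinear involution $J_s(v):=s\overline v$. The key structural observation is that $J_s$ is a real structure compatible with all the $\GG$-geometry: it is multiplicative, $J_s(x\cdot y)=J_s(x)\cdot J_s(y)$ (because $s\in\GG$ and the structure constants of the product on $\C^7$ are real), and satisfies $\langle J_sv,J_sw\rangle=\overline{\langle v,w\rangle}$, so it preserves isotropy, complex coassociativity, the distinguished plane $\mcb$ of Lemma \ref{lem:setofc}, and (via Corollary \ref{cor:orth}) the orthogonality relations. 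From $\sigma g_{-k}=\overline{g_{-k}}$ I obtain $J_s(\Im g_{-k})=\Im g_{-k}$. If $\Im g_{-k}=\mcc$ is already complex coassociative, then $s\mcc=\overline\mcc$, and splitting the $J_s$-invariant plane $\mcc$ into two $J_s$-real isotropic lines gives $\mcc=M\oplus N$ with $sM=\overline M$, $sN=\overline N$ and $p_{ir,\mcc}=p_{ir,M,N}$, a listed generator that reduces the pole as in the untwisted proof. If $\Im g_{-k}=L$ is an isotropic line, I would run the untwisted reduction but choose the input vector $v$ with $0\neq g_{-k}(v)\in L$ inside the $J_s$-real part of the $J_s$-invariant subspace $g_{-k}^{-1}(L)$; then $g_{-k+1}(v)$ is $J_s$-fixed, its orthogonal complement is $J_s$-invariant, and the line $M\subset\mcb$ singled out by $g_{-k+1}(v)\perp M$ is $J_s$-invariant by uniqueness, so $\mcc=L\oplus M$ again satisfies $s\mcc=\overline\mcc$ and splits into generators $p_{ir,M,N}$. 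Proposition \ref{prop:simplepolesg2} identifies these products with genuine $\GGC$-simple factors.

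I expect the main obstacle to be precisely this purely imaginary rank-$1$ case: one must verify carefully that $J_s$ is a multiplicative anti-isometry, that each Laurent coefficient $g_j$ is $J_s$-equivariant, and — most delicately — that the line $M$ forced by the pole-reduction condition can be chosen $J_s$-invariant. The resolution is to first select $v$ to be $J_s$-fixed, which is possible because $g_{-k}^{-1}(L)$ is a $J_s$-invariant complex subspace and hence is spanned by its $J_s$-real points not lying in $\ker g_{-k}$; this makes the constraint set $\mcb\cap g_{-k+1}(v)^\perp$ $J_s$-invariant so that its distinguished line is $J_s$-fixed. Once this compatibility is in place, the induction on total degree closes in both pole regimes and Liouville's theorem completes the proof.
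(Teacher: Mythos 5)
Your handling of the non-imaginary poles coincides with the paper's, and your $J_s$-formalism for imaginary poles is sound as far as it goes; your rank-one case is essentially a $J_s$-equivariant rerun of the untwisted reduction, which is also how the paper (by reference to the $\SO(n)$ argument) treats imaginary poles of order at least two. The genuine gap is your Case A, where $\Im g_{-k}$ is a complex coassociative plane $\mcc$, and it is fatal exactly at imaginary poles of order one --- the case the paper flags as ``the only new difficulty''. Multiplying by $p_{ir,\mcc}$ with $\mcc=\Im g_{-k}$ is \emph{not} ``the reduction as in the untwisted proof'': there the reducing plane is $L\oplus M$ with $L\subset\Im g_{-k}$ and $M\subset\mcb$ chosen so that $g_{-k+1}(v)\perp M$ (Lemma \ref{lem:setofc}), and it is precisely this perpendicularity that forces the rank to drop. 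With your choice the new leading coefficient is $\pi_{\ol{\mcc}}\,g_{-k+1}$, and nothing guarantees its rank is smaller. Concretely, for $k=1$ one always has $\rank g_{-1}=2$ with $\Im g_{-1}$ complex coassociative (by \eqref{eqn:so1} and \eqref{eqn:g21}), so every order-one imaginary pole lands in your Case A; now take $g=p_{\beta,M,N}$ itself with $\beta\in i\R$ and $N\not\subset(M\oplus\ol{M})^\perp$, i.e.\ case (2) of Proposition \ref{prop:simplepolesg2} --- one of the theorem's own listed generators. A check in the standard position of that proposition shows that $p_{\beta,\mcc}\,g$ (with $\mcc=\Im g_{-1}$) again has an order-one pole whose leading coefficient has rank two, now with image $\ol{\mcc}$, so your recipe next multiplies by $p_{\beta,\ol{\mcc}}=p_{\beta,\mcc}^{-1}$ and cycles forever. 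This is no accident: order-one poles cannot in general be removed by a single simple element, which is exactly why the theorem must list the generators $p_{\beta,M,N}$ with $N\not\subset(M\oplus\ol{M})^\perp$ at all --- generators your procedure never produces.

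What your outline is missing is the paper's key argument for this case. One invokes the $k=1$ step of the untwisted theorem to write $g=p_{\beta,M,N}h$ with $h$ holomorphic at $\beta$, and then shows that $p_{\beta,M,N}$ is \emph{automatically} twisted: the twisting of $g$ gives $p_{\beta,M,N}(\la)^{-1}\sigma(p_{\beta,M,N}(-\la))=h(\la)\,\sigma(h(-\la))^{-1}$, whose left-hand side has poles at most at $\beta,\ol{\beta}$ while the right-hand side is holomorphic there; both sides equal $\Id$ at $\infty$, so by Liouville the product is identically $\Id$. Finally, the freedom in the choice of $M$ (here your $J_s$-argument is exactly the right tool for arranging $sM=\ol{M}$), combined with the twisting of $p_{\beta,M,N}$, forces $sN=\ol{N}$, so the factor is a listed generator whose inverse removes the pole. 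With this replacing your Case A at $k=1$, and with the honest untwisted reduction (the plane $L\oplus M$ determined by $g_{-k+1}(v)\perp M$, all choices made $J_s$-equivariantly as in your Case B) replacing Case A at $k\ge 2$, your induction closes.
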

\begin{rem}
Note that $L$ and $K$ are only assumed to satisfy condition \eqref{eq:pairoflines}.
\end{rem}
\begin{proof} Let $g\in \rlgmtw(\GGC,\C^7)$. We use the same induction proof that was used for the generating theorem for $\SO(n)$-symmetric spaces.  If the order of a pole is greater than one, we may reduce its order using elements of the form $p_{\beta,\mcc}$ and $q_{\al,L,K}$ with $L\oplus K$ being complex coassociative. The only new difficulty occurs for poles of order $1$, so suppose we have applied the induction step until only such poles remain.

We first regard the case of a purely imaginary pole $\beta$ of order $1$.  As in the `$k=1$'-part of the proof of the generating theorem for $\rlgm(\GGC,\C^7)$, we find isotropic lines $M$ and $N$ such that
\[
g=p_{\beta,M,N}h,
\]
where $h$ is holomorphic at $\beta$, the line $N$ can be written as $N=R+t\ol{M}$ with $R\subset (M\oplus \ol{M})^\perp$ and $t\in \C$, and  $\mcc=M\oplus R$ is complex coassociative.  Note that there are many choices for $M$ and $N$; later, we will choose a particular line $M$.
The loop $p_{\beta,M,N}$ has poles only at $\beta$ and $\ol{\beta}$, and because $\beta\in i\R$, the loop $\sigma(p_{\beta,M,N}(-\la))$ has the same poles. Then,  
\[
\sigma(p_{\beta,M,N}(-\la))\sigma(h(-\la))=\sigma(g(-\la))=g(\la)=p_{\beta,M,N}(\la)h(\la)
\]
and therefore
\[
p_{\beta,M,N}(\la)^{-1}\sigma(p_{\beta,M,N}(-\la))=h(\la) \sigma(h(-\la))^{-1}.
\]
The right hand side is holomorphic at $\beta$ and $\ol{\beta}$, so $p_{\beta,L,K}$ satisfies the twisting condition.

The symmetries $\sigma(g)=g$ and $\ol{\beta}=-\beta$ ensure that we may additionally assume $sM=\ol{M}$. Since $p_{\beta,M,N}$ satisfies the twisting, from this choice of $M$ it follows that $sN=\ol{N}$.

Let $\al\in \C\setminus (\R\cup i\R)$ be a pole of $g$ of order $1$.  By the generating theorem for $\rlgm(\GGC,\C^7)$, we find $L$ and $K$ be such that the loop
\[
p_{\al,L,K}g
\]
is holomorphic at $\al$ and $-\al$. Since $p_{-\al,(sL)',(sK)'}$ is holomorphic at $\al$ and $\ol{\al}$, the loop
\[
q_{\al,L,K}g=p_{-\al,(sL)',(sK)'}p_{\al,L,K}g
\]
is also holomorphic at $\al$ and $\ol{\al}$ and satisfies the twisting condition.
\end{proof}

\section{A geometric application:  $\GG/\SO(4)$-abelian surfaces in $\R^7$}

Connections have been made between certain hyperbolic integrable systems associated to symmetric spaces $\G^{\tau}/\K$ (or the corresponding twisted loop group) and submanifold geometries (of symmetric spaces) with flat tangent  bundles \cite{Ferus1996,Terng2005}.   In order to associate a submanifold geometry of $\C^n$ to the $\Un(n)/\Or(n)$-system, Terng and Wang \cite{Terng2006} introduce an affine extension and it this type of extension that will be used here.   As explained more fully in \cite{Fox2008}, the correspondence between the $G^\tau/K$-system and submanifold geometry in the fundamental representation of $G^\tau$ extends to any $\G^{\tau}/\K$ which is a Grassmannian for the fundamental representation of $\G^{\tau}$.  We now explain this in the case of $\GG/\SO(4)$.   Without the affine extension Brander \cite{Brander2007} produced a surface geometry in $\s{6}$. We refer the reader to \cite{Terng2006} for an explanation of how one uses the dressing action to generate new solutions.  One could seek to interpret the dressing transformation on the submanifold geometry as a Ribaucour transformation as was done in \cite{Dajczer2000,Terng2006}.  Our calculations have shown that to calculate an explicit expression for the new immersion into $\R^7$ using the dressing action requires dealing with certain quadratic terms which do not appear for the $\Un(n)/\Or(n)$-system.  We have not pursued these calculations.

The fact that $\GG/\SO(4)$ is a rank two symmetric space suggests that one should seek a geometry of surfaces.  But $\GG/\SO(4)$ is the Grassmannian of associative $3$-planes (and also the Grassmannian of coassociative $4$-planes), which suggests a geometry of $3$-folds (or $4$-folds).   This conflict in dimension can be reconciled using the following
\begin{lem}\label{lem:g2grass}
\hspace{1cm}\\\vspace{-0.4cm}
\begin{enumerate}
\item{Each $2$-plane $E \subset \R^7$ is contained in a unique associative $3$-plane which will be denoted $E_+$.}\label{lem:part1}
\item{$\GG$ acts transitively on the Grassmannian of oriented $2$-planes $E \subset \R^7$ with stabilizer $\Un(2)$.     }
\item{$\GG$ acts transitively on the Grassmannian of associative $3$-planes \us{(}or of coassociative $4$-planes \us{)} and the stabilizer is $\SO(4)$.}
\end{enumerate}
\end{lem}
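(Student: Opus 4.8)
The plan is to dispose of parts (1) and (2) by appeal to the already-recorded Lemma~\ref{lem:realg2trans} and to put all the work into part (3), namely transitivity on associative $3$-planes together with the identification of the stabilizer.

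For part (1) I would argue directly from the algebra: if $E=\langle u,v\rangle$ with $u,v$ orthonormal imaginary octonions, then $u\cdot v$ is again imaginary (its real part is $-\langle u,v\rangle=0$), and by Artin's theorem the subalgebra of $\Oc$ generated by $u$ and $v$ is associative, hence a quaternion subalgebra with imaginary part $E_+=\langle u,v,u\cdot v\rangle$. Uniqueness follows because any associative $3$-plane $A\supset E$ makes $\R\cdot 1\oplus A\cong\Q$ a subalgebra, which is multiplicatively closed and therefore contains $u\cdot v$; comparing dimensions gives $A=E_+$. Part (2) is Lemma~\ref{lem:realg2trans}(2) verbatim, so I would simply cite it.

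For the transitivity in part (3) I would combine (1) and (2). Given associative $3$-planes $A$ and $A'$, pick $2$-planes $E\subset A$ and $E'\subset A'$; by part (2) there is $g\in\GG$ with $gE=E'$, and since $g$ preserves octonion multiplication it carries associative hulls to associative hulls, so uniqueness in part (1) gives $gA=(gE)_+=E'_+=A'$. Because $\GG\subset\SO(7)$ preserves orthogonal complements and coassociativity is defined by $C=A^\perp$ with $A$ associative, transitivity on coassociative $4$-planes is immediate; this also explains why the two Grassmannians coincide as homogeneous spaces.

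The main obstacle is the stabilizer computation. I would fix the standard coassociative plane $C=\langle e_1,e_2,e_3,e_4\rangle$ with associative complement $A=C^\perp=\langle e_5,e_6,e_7\rangle$, set $H=\stab_{\GG}(A)$, and note that $H\subset\SO(7)$ forces $H$ to preserve both $A$ and $C$, giving restriction maps into $\SO(A)\times\SO(C)\cong\SO(3)\times\SO(4)$. Writing $\Q=\R\cdot 1\oplus A$ for the quaternion subalgebra, each element of $H$ acts on $\Q$ by an algebra automorphism, yielding a homomorphism $H\to\Aut(\Q)=\SO(3)$; this is onto because, by the argument in part (3), $H$ already acts transitively on the oriented $2$-planes contained in $A$, and a subgroup of $\SO(3)$ acting transitively on that $S^2$ must be the whole of $\SO(3)$. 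Its kernel consists of the automorphisms fixing $\Q$ pointwise; using the Cayley--Dickson splitting $\Oc=\Q\oplus\Q\ell$ with a unit $\ell\in C$, so that $C=\Q\ell$, such a map is determined by $g(\ell)=q_0\ell$ and acts by $a+b\ell\mapsto a+(q_0 b)\ell$, which one checks against the Cayley--Dickson multiplication is a genuine automorphism for every unit $q_0\in\Q$. Hence the kernel is $\cong\Sp(1)$, so $\dim H=3+3=6$, and the combined action on $C\cong\Q$ by left multiplications (from the kernel) and by the rotations coming from $\Aut(\Q)$ exhibits $H$ as the full $\SO(4)=(\Sp(1)\times\Sp(1))/\{\pm 1\}$. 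The dimension count $\dim\GG-\dim H=14-6=8$ then confirms that the Grassmannian of associative (equivalently coassociative) planes is $\GG/\SO(4)$. The delicate step throughout is the verification, via the Moufang identities underlying the Cayley--Dickson product, that the candidate maps are octonion automorphisms and that no additional relations shrink the kernel below $\Sp(1)$.
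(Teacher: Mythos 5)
The paper never actually proves this lemma: it is recorded as a collection of well-known facts (parts (1) and (2) essentially repeat Lemma \ref{lem:realg2trans}, which is likewise stated without proof, and part (3) is classical, cf.\ Harvey--Lawson \cite{Harvey1982}), so there is no argument in the text to compare yours against; your proposal must stand on its own. Most of it does: the Artin-theorem argument for (1), including uniqueness via maximality of quaternion subalgebras, is correct; citing Lemma \ref{lem:realg2trans} for (2) is exactly what the paper intends; the derivation of transitivity on associative $3$-planes from the equivariance $g(E_+)=(gE)_+$ together with (1)--(2) is correct, as is the transfer to coassociative planes by orthogonal complementation; and your analysis of $H=\stab_{\GG}(A)$ via the restriction homomorphism $H\to\Aut(\Q)\cong\SO(3)$ --- surjective because $H$ is transitive on oriented $2$-planes in $A$, with kernel the maps $a+b\ell\mapsto a+(q_0b)\ell$, a copy of $\Sp(1)$ --- is the standard and correct route.

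The one place you must say more is the concluding identification $H\cong\SO(4)$. What you have established at that point is an exact sequence $1\to\Sp(1)\to H\to\SO(3)\to 1$, hence $\dim H=6$; but this does not determine $H$, since $\Sp(1)\times\SO(3)$ is a different extension with the same kernel and quotient. Your sentence about ``the combined action on $C\cong\Q$'' is the right idea, but it presupposes knowing how some lift of each rotation $a\mapsto q a\ol{q}$ acts on $C=\Q\ell$, which you never compute. Two quick ways to close the gap: (i) verify, by the same Cayley--Dickson computation you used for the kernel, that $a+b\ell\mapsto qa\ol{q}+(qb\ol{q})\ell$ is an automorphism for every unit $q\in\Q$; then every element of $H$ factors as a kernel element composed with such a lift, i.e.\ has the form $a+b\ell\mapsto qa\ol{q}+(pb\ol{q})\ell$, and $(p,q)\mapsto g_{p,q}$ exhibits $H$ as $(\Sp(1)\times\Sp(1))/\{\pm(1,1)\}\cong\SO(4)$ acting on $C\cong\Q$ in the standard way; or (ii) show that $H$ acts faithfully on $C$ --- if $g\in H$ fixes $C$ pointwise it fixes all products of elements of $C$, and $C\cdot C$ spans $\R\oplus A$ (e.g.\ $e_1e_2=-e_5$, $e_1e_3=-e_6$, $e_1e_4=-e_7$ in the paper's multiplication table), so $g=\Id$ --- whence $H$ embeds as a closed $6$-dimensional subgroup of $\SO(C)\cong\SO(4)$, which by connectedness of $\SO(4)$ must be all of it. With either supplement your proof is complete; a final cosmetic remark is that the verification in (i) uses only associativity of $\Q$ and properties of quaternionic conjugation, not the Moufang identities you invoke.
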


Let $\phi:M^2 \to \R^7$ be a smooth immersion of a surface.  Let  $\gamma:M \to \gr$ be the associated Gauss map, $\gamma(m)=\phi_*(T_m M) \subset \R^7$.  The fibration \\
\centerline{\xymatrix{ \s{2} \ar[r] & \gr \ar[d]^q \\ & \GG/\SO(4)}}\\
implied by part \ref{lem:part1} of Lemma \ref{lem:g2grass}, where $q(E)=E_+$, leads to the notion of an extended Gauss map:
 \[\Gamma=q \circ \gamma:M \to \GG/\SO(4).
 \]  
The Grassmannian of associative $3$-planes (or equivalently, the Grassmannian of coassociative $4$-planes) carries the tautological vector bundles \\
\centerline{\xymatrix{ \R^3 \oplus \R^4  \ar[r] & \mcv \oplus \mcw \ar[d] \\ & \GG/\SO(4)}}\\
which can be pulled back to any surface immersed in $\R^7$ using its extended Gauss map.  These vector bundles come equipped with natural Riemannian metrics and compatible connections and so their pullbacks also carry that structure. 

\begin{defn}
A smooth immersion $\phi:M^2 \to \R^7$ of a surface is $\GG/\SO(4)${\bf -abelian} if $\Gamma^*(\mcw)$ is flat (and consequently so is $\Gamma^*(\mcv)$).
\end{defn}

To make a clean statement of the geometric correspondence we introduce a nondegeneracy condition on surfaces.
\begin{defn}
A surface $\phi:M \to \R^7$ is $\GG/\SO(4)${\bf -nondegenerate} if its extended Gauss map is an immersion.
\end{defn} 

We will see shortly that any simply connected nondegenerate $\GG/\SO(4)$-abelian surface is equivalent to a solution of an affine extension of a (hyperbolic) integrable system associated to $\GG/\SO(4)$.  In turn, solutions to the integrable system are known to correspond to smooth maps into the twisted loop group $\mcl^{\tau,\sigma}_+(\GGC)$.  The dressing action \cite{Pressley1986} allows one to generate non-trivial smooth maps into  $\mcl^{\tau,\sigma}_+(\GGC)$ (and thus nontrivial solutions and submanifolds) by using elements of the rational loop group $\rlgmtw(\GGC,\C^7)$.   

For the elliptic integrable systems associated to a symmetric space, i.e.~ the harmonic and primitive map systems, having generators for the twisted rational loop group has allowed one to understand the moduli space of solutions when the domain is an $\s{2}$ and $\G^{\tau}=\Un(n)$ \cite{Uhlenbeck1989,Dai2007}.  No such theorem is known for the hyperbolic systems, but having generators certainly allows one to generate arbitrarily complicated solutions. 

To every Riemannian symmetric space there is an associated hyperbolic integrable system of PDE \cite{Terng2005}.   Following \cite{Terng2006} we use an affine extension of the $\GG/\SO(4)$-system.  We will display the extended system, the $\frac{\GG \ltimes \R^{7}}{\SO(4) \ltimes \R^3}$-system, in terms of a $1$-parameter family of flat connections. 

Using the explicit description of $\GG \subset  \SO(7)$ outlined in Appendix \ref{sec:g2}, the Lie algebra of the affine group $\GG \ltimes \R^7$ can be written in block form as
\begin{align*}
\g_2 \ltimes \R^{7}=&\left\{  \left. \bp C&- \trp{B}&X\\B&\nu(C)&Y\\0&0&0 \ep \right|  X \in \R^4, Y\in \R^3, C \in \so(4), B \in \lp   \right\}
\end{align*}
where $\lp$ and the homomorphism $\nu:\so(4) \to \so(3)$ are defined in Appendix \ref{sec:g2}.  Let $\G=\GGC \ltimes \C^7$ and let $\g$ be its Lie algebra.  Define the (commuting) automorphisms
\begin{align*}
&\hat \tau, \hat \sigma: \g \to \g\\
&\hat \tau (\xi)=\ol{\xi}\\
&\hat \sigma (\xi)=S\xi S
\end{align*}
where 
\be
S=\bp -I_4&0&0\\0&I_3&0\\0&0&1 \ep.
\ee
These automorphisms are affine extensions of the automorphisms $\tau$ and $\sigma$ defined in Section \ref{sec:twistedg2}.  They are chosen so that 
\begin{align*}
\g^{ \hat \tau}&=\g_2\ltimes \R^{7}\\
\g^{ \hat \tau,\hat \sigma}&=\lk 
\end{align*}
where
\be
\hat \lk:=\so(4) \ltimes \R^3=\left\{ \left. \bp C&0&0\\0&\nu(C)&Y\\0&0&0 \ep \right| C \in \so(4), Y \in \R^3 \right\}.
\ee
On $\g^{\hat \tau}$ the $+1$-eigenspace of $\hat \sigma$ is $\lk$ and the $-1$-eigenspace is 
\be
\hat \lp=\left\{ \left. \bp 0&-\trp{B}&X\\B&0&0\\0&0&0 \ep \right| B \in \lp; X \in \R^4 \right\}.
\ee
Thus the automorphisms $\hat \tau$ and $\hat \sigma$ on $\G$ define the homogeneous space $\frac{G^{\hat \tau}}{G^{\hat \tau \hat \sigma}}=\frac{\GG \ltimes \R^{7}}{\SO(4) \ltimes \R^3}$, which is an `affine extension' of the symmetric space $\GG/\SO(4)$.

Use the elements
\be
a_1=\bp 1&0&0&0\\0&0&0&0\\0&0&1&0 \ep ,\;\; a_2=\bp 0&0&0&0\\0&0&0&1\\0&0&1&0  \ep
\ee
and the inclusion
\be
a_i \mapsto b_i=\bp 0&-\trp{a_i}&0\\a_i&0&0\\0&0&0\ep 
\ee
to define a torus $\mathfrak{ a} \subset \hat \lp$.

Let $\R^{2}$ have the standard coordinates $(x_1,x_2)$.  A (hyperbolic) integrable system associated  to $\frac{\GG \ltimes \R^{7}}{\SO(4) \ltimes \R^3}$ can now be defined as a family of flat connections of the form 
\be\label{eq:flatformg2}
\theta_{\lambda}=\sum_{i=1}^{2}(\lambda a_i   + [b_i,v])\ed x_i 
\ee
where $v:\R^2 \to \hat \lp $.  This implies that the connection is of the form
\be
\theta_{\lambda}=\bp \al& -\la \trp{\beta} &0 \\ \la \beta & \nu(\al) & \om\\ 0&0&0 \ep
\ee
where $\beta$ is of the form
\be
\bp \beta_1&0&0&0\\0&0&0&\beta_2\\0&0&\beta_1+\beta_2&0 \ep .
\ee

\begin{prop}
A simply-connected nondegenerate $\GG/\SO(4)$-abelian surface $\phi:\R^2 \to \R^7$ is equivalent to a family of flat connections $\theta_{\lambda}$ on $\R^7$ of the form given in Equation \eqref{eq:flatformg2}.  
\end{prop}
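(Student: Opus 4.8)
The plan is to run the method of moving frames: realize the surface and its extended Gauss map as a lift into the affine group $\GG\ltimes\R^7$, pull back the Maurer--Cartan form to get a \emph{flat} $\g_2\ltimes\R^7$-valued connection, and then let the three hypotheses (immersion, nondegeneracy, abelian) cut this connection down to the normal form \eqref{eq:flatformg2}, with the spectral parameter inserted afterwards by the standard scaling of the $\hat\lp$-component. This follows the template of \cite{Fox2008,Terng2006}; the only genuinely new input is the $\GG$-specific linear algebra recorded in Section \ref{sec:g2basics}.

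For the forward direction I would first use simple-connectivity of $\R^2$ together with the transitivity statements of Lemma \ref{lem:g2grass} to lift $\phi$ to a smooth adapted frame $F=(g,\phi):\R^2\to\GG\ltimes\R^7$, where the columns of $g\in\GG$ form an orthonormal basis with the first pair spanning $\phi_*(TM)$, the third completing the associative $3$-plane $E_+=\Gamma(m)$, and the last four spanning the coassociative $4$-plane $\Gamma(m)^\perp$; the residual gauge is the isotropy group $\SO(4)\ltimes\R^3$. Setting $\theta=F^{-1}\ed F$ yields a one-form satisfying the reality condition $\hat\tau$ and the Maurer--Cartan equation $\ed\theta+\tfrac12[\theta\wedge\theta]=0$, which I decompose as $\theta=\theta_{\hat\lk}+\theta_{\hat\lp}$ along $\g^{\hat\tau}=\hat\lk\oplus\hat\lp$. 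Because $\phi$ is an immersion with $\ed\phi$ valued in the tangent plane $E\subset E_+$, the $\R^4$ (coassociative) translation block of $\theta$ vanishes; hence $\theta_{\hat\lp}$ is purely $\lp$-valued and the whole translation part lands in the $\R^3$ block $\om$ of $\theta_{\hat\lk}$, producing the zero upper-right entry of \eqref{eq:flatformg2}.

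Next I would exploit the abelian condition. Flatness of $\Gamma^*\mcw$ says exactly that the $\so(4)$-curvature $\ed\al+\al\wedge\al$ vanishes, and via the Maurer--Cartan equation this is the $\so(4)$-component of $[\theta_{\hat\lp}\wedge\theta_{\hat\lp}]$; since the $\R^4$ block of $\theta_{\hat\lp}$ is already zero, the remaining translation component of that bracket vanishes too, so $[\theta_{\hat\lp}\wedge\theta_{\hat\lp}]=0$ identically --- the curved-flat condition. Using the $\SO(4)$ gauge and the nondegeneracy of $\Gamma$ I would conjugate $\theta_{\hat\lp}$ into the Cartan subspace $\mathfrak{a}=\operatorname{span}(b_1,b_2)$ and choose null coordinates $(x_1,x_2)$ so that $\theta_{\hat\lp}=a_1\ed x_1+a_2\ed x_2$. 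Exhibiting a potential $v:\R^2\to\hat\lp$ with $\theta_{\hat\lk}=\sum_i[b_i,v]\,\ed x_i$ and setting $\theta_\lambda=\theta_{\hat\lk}+\lambda\theta_{\hat\lp}$, flatness of $\theta_\lambda$ for all $\lambda$ is then immediate: the $\lambda^2$ term drops since $[a_1,a_2]=0$, the Jacobi identity with $[b_1,b_2]=0$ kills the $\lambda^1$ term, and the $\lambda^0$ term is precisely the Maurer--Cartan equation at $\lambda=1$, i.e. the Gauss--Codazzi equations of the surface. The converse is the usual Frobenius argument: a flat family \eqref{eq:flatformg2} integrates on the simply-connected $\R^2$ to a frame $F$, and $\phi$ is recovered from the translation block $\om=\sum_i a_iX_v\,\ed x_i$, the construction being reversible by the same computations.

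I expect the main obstacle to be the $\GG$-specific linear algebra underlying the two normalization steps: showing that the abelian and nondegeneracy hypotheses, together with the octonionic multiplication and the homomorphism $\nu:\so(4)\to\so(3)$ of Section \ref{sec:g2basics}, force $\theta_{\hat\lp}$ into $\mathfrak{a}$ with exactly the constrained pattern $\beta=\bp\beta_1&0&0&0\\0&0&0&\beta_2\\0&0&\beta_1+\beta_2&0\ep$, and that the $\hat\lk$-part is realizable through a single $\hat\lp$-valued field $v$ --- in particular that the frame components of the immersion satisfy the compatibility $\om_i=a_iX_v$ imposed by $[b_i,v]$. These are not formal symmetric-space facts; they rely on the precise weight diagram and multiplication table established earlier, and are where the rank-two geometry of $\GG/\SO(4)$ genuinely enters.
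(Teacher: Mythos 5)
Your overall strategy is the same as the paper's: pull back the Maurer--Cartan form of an adapted frame, use the abelian hypothesis to obtain the curved-flat condition $[\theta_{\hat\lp}\w\theta_{\hat\lp}]=0$ (your argument for this, using that the $\R^4$-block of the translation part vanishes and that the $\so(3)$-block of $\hat\lk$ is slaved to the $\so(4)$-block via $\nu$, is correct), gauge $\theta_{\hat\lp}$ into the Cartan subspace $\mathfrak{a}$, produce coordinates, and insert $\lambda$; the $\lambda$-expansion argument for flatness of the whole family is also fine. But your set-up contains a genuine error. By pinning the first pair of frame vectors to $\phi_*(TM)$ you reduce the residual gauge group to the stabilizer of an oriented $2$-plane, which by Lemma \ref{lem:realg2trans} is $\Un(2)$ --- not $\SO(4)\ltimes\R^3$ as you assert (there is no translational gauge freedom at all, since the translation component of the frame is $\phi$ itself). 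Your later step ``using the $\SO(4)$ gauge \dots conjugate $\theta_{\hat\lp}$ into $\mathfrak{a}$'' then fails: because $\GG/\SO(4)$ has rank two with trivial centralizer of $\mathfrak{a}$ in $\lk$, the Cartan subspaces of $\lp$ form a $6$-dimensional family, and the $4$-dimensional group $\Un(2)$ cannot act transitively on it, so a point-dependent $\Un(2)$-gauge cannot align the point-dependent abelian subspaces containing $\Im\beta$ with a fixed $\mathfrak{a}$. This is exactly why the paper's proof refuses to adapt the frame to the tangent plane and explicitly reserves the full $\SO(4)$ freedom for the normalization of $\beta$; the repair is simply to drop your tangent-plane condition, keeping only the splitting into associative and coassociative bundles.

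The second problem is that the core of the proof is missing, as you yourself flag. You ``choose'' coordinates with $\theta_{\hat\lp}=b_1\,\ed x_1+b_2\,\ed x_2$ and ``exhibit'' a potential $v$ with $\theta_{\hat\lk}=\sum_i[b_i,v]\,\ed x_i$, but both facts must be derived, and neither is formal. Closedness of $\beta_1,\beta_2$ is not an automatic consequence of the curved-flat condition: in the paper it is the first of the ten equations obtained by expanding $\ed\beta+\nu(\al)\w\beta+\beta\w\al=0$ against the sparse Cartan-subspace pattern of $\beta$, and the remaining nine equations, together with $\nu(\al)\w\om=0$, are precisely what forces every component of $\al$ and $\om$ to be a function multiple of $\ed s$, $\ed t$ or $\ed s+\ed t$ --- i.e.\ what makes the nine one-form components of the $\hat\lk$-part expressible through the single $\hat\lp$-valued field $v$, which a priori they need not be. Identifying this computation as ``the main obstacle'' is accurate, but that computation \emph{is} the proof of the proposition; without it (and with the gauge error above) the proposal is a correct outline of the paper's strategy rather than a proof.
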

\begin{proof}
By a $\GG$-adapted frame of a surface we mean a frame such that $e_5,e_6,e_7$ span the unique extension of the tangent bundle to a bundle of associative three-planes, and  $(e_1, \ldots,e_7) \in \GG$.  As the tangent bundle only plays a supporting role in this geometry, it will be useful for the calculations below to not insist that any one of $e_5,e_6,e_7$ is normal to the surface.  Instead we reserve that gauge freedom in order to find coordinates that are `line of curvature coordinates' with respect to the second fundamental form of the bundle $\phi^*\mcv$.   

A simply-connected surface in $\R^7$ with a $\GG$-adapted frame is equivalent to a surface with a flat $\g_2 \ltimes \R^7$-valued one-form.  We prove that the geometric condition of being $\GG/\SO(4)$-abelian implies that the connection takes on the special form depicted in equation \eqref{eq:flatformg2}.  It is an easy exercise with moving frame arguments to check that the surface defined by the one-form in equation \eqref{eq:flatformg2} is in fact $\GG/\SO(4)$ abelian, so we leave it out.

Let $\theta$ be the pullback of the Maurer-Cartan form to a $\GG/\SO(4)$-abelian surface.  Using the fact that $\GG$ acts transitively on the Grassmannian of associative $3$-planes, we can use the gauge group (i.e.~ adapt coframes) to put the flat connection $\theta$ in the form
\be
\theta=\bp \al& - \trp{\beta} &0 \\  \beta & \nu(\al) & \om\\ 0&0&0 \ep.
\ee
The remaining gauge group is $\SO(4)$ because that is the stabilizer of an associative $3$-plane.  The flatness of $\theta$ implies that the structure equations for the tautological vector bundles $\phi^*\mcv$ and $\phi^*\mcw$ are
\begin{align*}
\ed \alpha + \alpha \w \alpha-\trp{\beta}  \w \beta &=0\\
\ed \nu(\alpha) + \nu(\alpha) \w \nu(\alpha)-{\beta}  \w \trp{\beta} &=0.
\end{align*}
The flatness of $\phi^*(\mcw)$ implies that 
\be
\trp{\beta}  \w \beta =0.
\ee
Because $\SO(4)$ acts transitively on the maximal abelian subalgebras of $\p$, the $\SO(4)$-gauge action can be used to find a coframe in which 
\be
\beta=\bp \beta_1&0&0&0\\0&0&0&\beta_2\\0&0&\beta_1+\beta_2&0 \ep. 
\ee
The sparseness of $\beta$ reduces the structure equation
\be
\ed \beta+\nu(\alpha) \w \beta+\beta \w \alpha =0
\ee
to
\begin{align*}
\ed \beta_1=\ed \beta_2&=0\\
\beta_{11}\w \alpha_{12}&=0\\
\beta_{24}\w \alpha_{42}&=0\\
\beta_{33}\w \alpha_{32}&=0\\
\beta_{11}\w \alpha_{13}+\nu(\alpha)_{13} \w \beta_{33}&=0\\
\beta_{11}\w \alpha_{14}+\nu(\alpha)_{12} \w \beta_{24}&=0\\
\beta_{24}\w \alpha_{41}+\nu(\alpha)_{21} \w \beta_{11}&=0\\
\beta_{24}\w \alpha_{43}+\nu(\alpha)_{23} \w \beta_{33}&=0\\
\beta_{33}\w \alpha_{31}+\nu(\alpha)_{31} \w \beta_{11}&=0\\
\beta_{33}\w \alpha_{34}+\nu(\alpha)_{32} \w \beta_{24}&=0.
\end{align*}
The first equation implies the existence of smooth functions $s,t:\R^2 \to \R$ for which $\beta_1=\ed s$ and $\beta_2=\ed t$.  The next three equations then imply that 
\begin{align*}
\alpha_{21}&=p_1\ed s \\
\alpha_{42}&=p_2\ed t\\
\alpha_{32}&=p_3(\ed s + \ed t).
\end{align*}
The remaining equations reduce to
\begin{align*}
\beta_{1}\w \alpha_{13}+(\alpha_{31}-\alpha_{42}) \w (\beta_{1}+\beta_{2})&=0\\
\beta_{2}\w \alpha_{43}-(\alpha_{21}+\alpha_{43}) \w (\beta_{1}+\beta_{2})&=0\\
\beta_{1}\w \alpha_{14}-(\alpha_{41}+\alpha_{32}) \w \beta_{2}&=0\\
\beta_{2}\w \alpha_{41}+(\alpha_{41}+\alpha_{32}) \w \beta_{1}&=0\\
(\beta_{1}+\beta_{2})\w \alpha_{31}-(\alpha_{31}-\alpha_{42}) \w \beta_{1}&=0\\
(\beta_{1}+\beta_{2})\w \alpha_{34}+(\alpha_{21}+\alpha_{43}) \w \beta_{2}&=0
\end{align*}
The first, second, and third equations imply that 
\begin{align*}
\alpha_{31}&=(2q_1-p_2)\ed s + q_1\ed t\\
\alpha_{41}&=-(q_2+p_3)\ed s + q_2\ed t\\
\alpha_{43}&=q_3 \ed s + (2q_3+p_1)\ed t\\
\end{align*}
for some smooth functions $q_1,q_2,q_3:\R^2 \to \R$ and the last three equations are then automatically satisfied.  

The structure equation for the (vanishing) coframes of the bundle $\phi^*\mcw$  implies that $\nu(\alpha) \w \om=0$, so that
\begin{align*}
\om_1&=r_1\ed s \\
\om_2&=r_2\ed t \\
\om_3&=r_3(\ed s + \ed t)
\end{align*} 
for smooth functions $r_1,r_2,r_3:\R^2 \to \R$.

If we set $x_1=s$ and $x_2=t$ then it follows that $\theta=\sum_{i=1}^{2}( b_i   + [b_i,v])\ed x_i $ for $v=\bp 0&-\trp B & X  \\ B & 0 & 0 \ep$, 
\be
B=\bp 0&p_1&q_1-p_2&-(p_3+q_2) \\ -q_2&-p_2&-(p_2+q_3)&0 \\ -q_1&-p_3&0&q_3\ep,
\ee
and $ X=\trp \bp  r_1&0&r_3 &r_2 \ep$.  In fact, the structure equations directly imply that $\theta_{\la}=\sum_{i=1}^{2}(\lambda b_i   + [b_i,v])\ed x_i $ is flat for any $\la \in \C$.  Thus we have produced a family of flat connections of the desired form.
\end{proof}

As discussed in \cite{Fox2008}, whenever $G^\tau/K$ is a Grassmannian for the fundamental representation $V$ of $G^\tau$, one can define a submanifold geometry $\R^r \to V$ (where $r=\rank(G^\tau/K)$) using an extended Gauss map to insist that the pull back from $G^\tau/K$ of certain tautological bundles are flat.  It is not clear what the submanifold geometry will be when $G^\tau/K$ is not a Grassmannian, e.g.~ $\SO(2n)/\Un(n)$, but it would be interesting to know. 
\appendix
\section{An explicit description of $\GG \subset \SO(7)$}\label{sec:g2}
Let $\Oc$ denote the octonions, the unique real $8$-dimensional division algebra, equipped with the natural metric $\langle x,y \rangle=\Re(x\cdot \bar{y})=\frac{1}{2}(x\cdot \bar{y}+y \cdot \ol{x})$. The compact simple Lie group $\GG$ is known to be the automorphism group of $\Oc$. Since the metric is defined via the multiplication, we get $\GG\subset \SO(\Oc)$. The subspace $\R\cdot 1\subset \Oc$ is fixed by $\GG$, so if we identify $\R^7=\Im(\Oc)$, we obtain the fundamental representation $\GG\subset \SO(7)$.

Let $1,e_1,\ldots,e_7$ be the standard orthonormal basis of $\R^8 \cong \Oc$.  Our convention is that they satisfy the multiplication table:
\begin{center}
\begin{tabular}{c||c|c|c|c|c|c|c|c}
&$1$&$e_1$&$e_2$&$e_3$&$e_4$&$e_5$&$e_6$&$e_7$\\
\hline\hline
$1$&$1$&$e_1$&$e_2$&$e_3$&$e_4$&$e_5$&$e_6$&$e_7$\\
\hline
$e_1$&$e_1$&$-1$&$-e_5$&$-e_6$&$-e_7$&$e_2$&$e_3$&$e_4$\\
\hline
$e_2$&$e_2$&$e_5$&$-1$&$-e_7$&$e_6$&$-e_1$&$-e_4$&$e_3$\\
\hline
$e_3$&$e_3$&$e_6$&$e_7$&$-1$&$-e_5$&$e_4$&$-e_1$&$-e_2$\\
\hline
$e_4$&$e_4$&$e_7$&$-e_6$&$e_5$&$-1$&$-e_3$&$e_2$&$-e_1$\\
\hline
$e_5$&$e_5$&$-e_2$&$e_1$&$-e_4$&$e_3$&$-1$&$e_7$&$-e_6$\\
\hline
$e_6$&$e_6$&$-e_3$&$e_4$&$e_1$&$-e_2$&$-e_7$&$-1$&$e_5$\\
\hline
$e_7$&$e_7$&$-e_4$&$-e_3$&$e_2$&$e_1$&$e_6$&$-e_5$&$-1$
\end{tabular}
\end{center}

From the multiplication table one can calculate that a matrix $X \in \so(7)$ is in $\g_2$ if and only if it satisfies
\be\label{eqn:g2rel}
\begin{split}
X_{67}&= X_{12}+X_{34}\\
X_{75}&= X_{13}+X_{42}\\
X_{56}&= X_{14}+X_{23}\\
X_{51}&+X_{64}-X_{73}=0\\
X_{52}&+X_{63}+X_{74}=0\\
X_{53}&- X_{62}+X_{71}=0\\
X_{54}&- X_{61}-X_{72}=0.
\end{split}
\ee

Let $S=\bp -I_4&0\\0&I_3\ep$ and $\sigma(X)=SXS^{-1}$.  This induces the Cartan decomposition $\g_2=\so(4) \oplus \lp$.  Adapting the description of $\g_2$ to this decomposition we can write an element as
\be
X=\bp A & -\trp{B} \\ B & \nu(A) \ep
\ee
with $A \in \so(4)$, $\nu:\so(4) \to \so(3)$ and $B \in \lp$.  The symmetries of $\lp$ and the map $\nu$ are explicitly defined by \eqref{eqn:g2rel}


\newcommand{\noopsort}[1]{}

\end{document}